\theoremstyle{plain}
\newtheorem{theo}{Th\'eor\`eme}[section]
\newtheorem{lem}[theo]{Lemme}
\newtheorem{prop}[theo]{Proposition}
\newtheorem{conj}{Conjecture}
\newtheorem{question}[conj]{Question}
\newtheorem*{cor}{Corollaire}
\theoremstyle{definition}
\newcommand{\sub}{\mathrm{Sub}}
\newcommand{\supp}{\mathrm{supp}}
\newcommand{\eps}{\varepsilon}
\newcommand{\ovl}[1]{\overline #1}
\newcommand{\pl}{\partial}
\newcommand{\bs}{\backslash}
\newcommand{\vol}{\operatorname{vol}}
\newcommand{\Id}{\operatorname{Id}}
\newcommand{\tr}{\operatorname{tr}}
\newcommand{\otr}{\operatorname{Tr}}
\renewcommand{\hom}{\operatorname{Hom}}
\newcommand{\inj}{\operatorname{inj}}
\newcommand{\G}{\mathbf{G}}
\newcommand{\PSL}{\mathrm{PSL}}
\newcommand{\SL}{\mathrm{SL}}
\newcommand{\SU}{\mathrm{SU}}
\newcommand{\SO}{\mathrm{SO}}
\newcommand{\GL}{\mathrm{GL}}
\newcommand{\so}{\mathcal{O}}
\newcommand{\prob}{\mathrm{Prob}}
\newcommand{\isom}{\mathrm{Isom}}
\newcommand{\NN}{\mathbb N}
\newcommand{\CC}{\mathbb C}
\newcommand{\RR}{\mathbb R}
\newcommand{\ZZ}{\mathbb Z}
\newcommand{\HH}{\mathbb H}
\newcommand{\QQ}{\mathbb Q}
\author[Jean Raimbault]{Jean Raimbault \\ avec un appendice par l'auteur et Ian Biringer}
  \address{Institut de Math\'ematiques de Toulouse ; UMR 5219, Universit\'e de Toulouse, CNRS \\ UPS IMT, F-31062 Toulouse Cedex 9, France}
  \email{Jean.Raimbault@math.univ-toulouse.fr}
\title{G\'eom\'etrie et topologie des vari\'et\'es hyperboliques de grand volume}
\begin{document}

\maketitle

\begin{abstract}
Cet article est un survol autour de deux pr\'epublications r\'ecentes \cite{7samurai} et \cite{moi1}, qui se posent la question de l'\'etude de certains invariants topologiques et g\'eom\'etriques dans des suites d'espaces localement sym\'etriques dont le volume tend vers l'infini. On donne aussi quelques applications \`a divers mod\`eles de surfaces al\'eatoires. 
\end{abstract}

\setcounter{tocdepth}{1}
\tableofcontents
\setcounter{tocdepth}{2}

%%%%%%%%%%%%%%%%%%%%%%%%%%%%%%%%%%%%%%%%%%%%%%%%%%%%%%%%%%%%%%%%%%%%%%%%%%%%%%%%

\section{Introduction}
\label{intro}
Dans cet article en grande partie expositoire on s'int\'eressera \`a quelques aspects de la question suivante : on prescrit des bornes g\'eom\'etriques locales (par exemples, dans le cadre Riemannien, des bornes sur la courbure sectionnelle ou de Ricci, ou plus drastiquement on prescrit la classe d'isom\'etrie locale), et on se demande si ces conditions imposent des contraintes {\it globales} sur la g\'eom\'etrie, la topologie et leurs relations. Un exemple classique et frappant de r\'esultat allant dans cette direction est le th\'eor\`eme de Marcel Berger et Daniel Grove et Katsuhiro Shiohama selon lequel, si une vari\'et\'e riemannienne compacte a toutes ses courbures sectionelles sup\'erieures \`a 1 et un diam\`etre strictement plus grand que $\pi/2$ alors elle est en fait diff\'eomorphe \`a la sph\`ere de m\^eme dimension (cf. \cite[Theorem 81]{Petersen}). Un autre exemple est le r\'esultat de Gromov qui donne des bornes absolues (ne d\'ependant que de la dimension) sur le rang du groupe fondamental et les nombres de Betti (\`a coefficients arbitraires) d'une vari\'et\'e \`a courbure sectionelle positive (cf. \cite[Theorem 86]{Petersen}). 

Nous nous int\'eresserons ici plut\^ot \`a la classe des espaces \`a courbure n\'egative, pour lesquels la situation est tr\`es diff\'erente. Par exemple, m\^eme si on impose une courbure sectionnelle constante il existe en toute dimension donn\'ee une infinit\'e de types topologiques, en fait les nombres de Betti peuvent \^etre arbitrairement grands (cf. \cite{Lubotzky_free_quotients}). En revanche, sous la m\^eme condition de courbure n\'egative constante et sauf en dimension trois\footnote{Le cas de la dimension trois est assez diff\'erent mais bien compris gr\^ace aux travaux de Thurston sur la chirurgie de Dehn hyperbolique.}, pour un $V>0$ donn\'e il n'existe \`a diff\'eomorphisme pr\`es qu'un nombre fini vari\'et\'es $M$ satisfaisant en plus \`a une borne $\vol(M)\le V$. On s'int\'eresse alors naturellement aux contraintes impos\'ees par le volume sur la g\'eom\'etrie globale et la topologie. Par example un th\'eor\`eme aussi d\^u \`a Gromov affirme que les nombres de Betti sont born\'es lin\'eairement en le volume (pour une vari\'et\'e ayant des courbures sectionnelles entre $-1$ et 0 et pas de facteur euclidien, avec une constante d\'ependant uniquement de la dimension---cf. \cite[Theorem 2]{Ballmann_Gromov_Schroeder}).

Les r\'esultats sur lesquels nous nous attarderons dans ce survol concerneront les espaces localement sym\'etriques \`a courbure n\'egative sans facteurs euclidiens, autrement dit les quotients $\Gamma\bs X$ o\`u $X=G/K$ est l'espace sym\'etrique associ\'e \`a un groupe de Lie semisimple $G$ et un sous-groupe compact maximal $K\subset G$, et $\Gamma$ un r\'eseau de $G$, c'est-\`a-dire un sous-groupe discret tel que $G/\Gamma$ ait une mesure bor\'elienne finie et $G$-invariante. Dans ce cadre on s'int\'eresse \`a la g\'eom\'etrie globale principalement \`a travers les invariants m\'etriques $\vol(M_{\le R})/\vol(M)$ o\`u $M_{\le R}$ d\'esigne la partie $R$-mince\footnote{Certains utilisent plut\^ot ``fin'' pour traduire l'anglais {\it thin}, mais le terme de ``partie fine'' me semble maladroit en fran\c cais.}, autrement dit le sous-ensemble des points de $M$ autour desquels le rayon maximal d'une boule plong\'ee est plus petit que $R/2$ (de mani\`ere \'equivalente, il existe un lacet homotopiquement non-trivial sur $M$ passant par ce point et de longueur inf\'erieure \`a $R$). On s'int\'eressera \`a l'\'etude de suites de vari\'et\'es o\`u cette quantit\'e tend vers 0, \`a la fois pour ses cons\'equences topologiques non-triviales (cf. \ref{multlim}, \ref{topo_dim3}) et pour l'abondance de situations o\`u cette condition est r\'ealis\'ee (cf. \ref{applications}, \ref{random_surfaces} et \ref{higher_dim}). 

Cette condition peut \^etre interpr\'et\'ee comme la convergence vers le rev\^etement universel $X$ dans une certaine compactification de l'espace des r\'eseaux $\Gamma$ de $G$. Pour d\'efinir cette compactification il faut introduire la topologie de Benjamini--Schramm sur les espaces m\'etriques al\'eatoires point\'es, qui est une version probabiliste de la topologie bien connue de Gromov--Hausdorff point\'ee. Les objets que l'on doit alors consid\'erer ne sont alors plus des sous-groupes discrets de $G$, mais des mesures de probabilit\'es invariantes par conjugaison sur l'espace des sous-groupes ferm\'es de ce dernier. Ces sous-groupes al\'eatoires invariants ont \'et\'e introduits par Mikl\'os Ab\'ert, Bal\'int Vir\'ag et Yair Glasner dans \cite{AGV}, et ont r\'ecemment fait l'objet d'assez nombreux travaux (cf. \cite{Eisenmann_Glasner} pour un exemple et de nombreuses autres r\'ef\'erences r\'ecentes). Dans cet article on essaiera plut\^ot de d\'egager les cons\'equences g\'eom\'etriques de l'\'etude des sous-groupes al\'eatoires des groupes de Lie, qui constituent un puissant outil pour l'\'etude des vari\'et\'es de grand volume, t\'emoins les th\'eor\`emes \ref{BS_rangsup} et \ref{kleinian_arit_conv} ci-dessous. Le pendant plus g\'eom\'etrique de ces derniers, qui sera tr\`es peu explor\'e ici, est donn\'e par les mesures de probabilit\'e unimodulaires sur les espaces de vari\'et\'es riemanniennes ; on ref\`ere \`a \cite{Abert_Biringer} pour plus de d\'etails. Citons aussi l'article \cite{NPS}, qui \'etudie dans un langage plus g\'eom\'etrique la topologie des limites de certaines suites de m\'etriques riemanniennes sur une vari\'et\'e donn\'ee. 

On peut encore restreindre le cadre d'\'etude en s'int\'eressant aux vari\'et\'es arithm\'etiques, qui sont une famille particuli\`ere construites \`a partir de r\'eseaux g\'en\'eralisant l'exemple $\PSL_2(\ZZ)\subset\PSL_2(\RR)$. Une sous-famille distingu\'ee est donn\'ee par les r\'eseaux dits de congruence\footnote{Que l'on ne d\'efinira pas ici ; mais ils contiennent en particulier les r\'eseaux maximaux, et voir aussi \ref{rev_cong} pour plus d'exemples.} Ces derniers ont souvent des propri\'et\'es g\'eom\'etriques particuli\`erement r\'eguli\`eres : par exemple leurs constantes de Cheeger sont uniform\'ement born\'ees inf\'erieurement (un r\'esultat remontant \`a Atle Selberg pour les surfaces hyperboliques, prouv\'e dans la plus grande g\'en\'eralit\'e par Laurent Clozel dans \cite{Clozel_tau}). Leur origine arithm\'etique permet de plus d'attaquer les probl\`eme g\'eom\'etriques en utilisant les outils de la th\'eorie des nombres : un exemple frappant est le probl\`eme du volume minimal pour les vari\'et\'es hyperboliques, qui est encore compl\`etement ouvert en grandes dimensions mais qui dans le cadre des vari\'et\'es arithm\'etiques est sinon r\'esolu compl\`etement, au moins plus pr\`es d'une telle r\'esolution (on ref\`ere \`a \cite{Belolipetsky_icm} pour un survol r\'ecent du sujet). Pour ce qui est de la convergence de Benjamini--Schramm vers le rev\^etement universel on peut l'\'etablir directement (sans utiliser les sous-groupes al\'eatoires) dans plusieurs cas : voir les th\'eor\`emes \ref{BSconv_congruence} et \ref{kleinian_arit_conv} ci-dessous. 

Les r\'esultats sur la topologie de Benjamini--Schramm \'evoqu\'es ci-dessus ne sont pas nouveaux, et proviennent pour la plupart des articles \cite{7samurai} et \cite{moi1}. Nous apportons aussi dans cet article quelques observations et r\'esultats originaux :
\begin{itemize}
\item Le th\'eor\`eme \ref{IRS_dim2_theo} donne une classification topologique des sous-groupes al\'eatoires invariants de $\PSL_2(\RR)$ (d\'emontr\'ee dans l'appendice \ref{IRS_dim2} \'ecrit avec I. Biringer). 
\item En \ref{random_surfaces} on donne une interpr\'etation de r\'esultats de Maryam Mirzakhani \cite{Mirzakhani} et de Robert Brooks et Eran Makover \cite{Brooks_Makover} comme r\'esultats de convergence, et leur cons\'equences pour le spectre des surfaces al\'eatoires.  
\end{itemize}

\medskip

Cet article est organis\'e comme suit : en \ref{BSconv} on donne un r\'esum\'e de la plupart des r\'esultats et notions contenus dans \cite{7samurai} : on introduit en \ref{Benjamini_Schramm_def}, dans un cadre g\'en\'eral, la notion de convergence de Benjamini--Schramm, que l'on sp\'ecialise ensuite au cadre des espaces localement sym\'etriques. On indique les liens pr\'ecis avec les sous-groupes al\'eatoires dans la section \ref{BS_sym}. La section suivante \ref{IRS} donne quelques propri\'et\'es de ces derniers, puis en \ref{multlim} on montre comment d\'eduire de la convergence de Benjamini--Schramm des r\'esultats de multiplicit\'es limites pour les valeurs propres des laplaciens. La derni\`ere section \ref{applications} de cette partie pr\'esente les deux cas de convergence \'etudi\'es dans \cite{7samurai}. La seconde partie est centr\'ee sur les vari\'et\'es hyperboliques r\'eelles : on commence en \ref{IRS_hyperboliques} par un expos\'e des r\'esultats connus sur les sous-groupes invariants des groupes $\SO(n,1)$ (y compris le th\'eor\`eme d\'emontr\'e dans l'appendice \ref{IRS_dim2}). Puis la section \ref{random_surfaces} interpr\`ete quelques r\'esultats connus sur certains mod\`eles al\'eatoires de surfaces hyperboliques \`a la lumi\`ere de la convergence de Benjamini--Schramm. Enfin la section \ref{higher_dim} essaie de d\'egager quelques id\'ees sur la convergence des vari\'et\'es hyperboliques en plus grandes dimensions (on s'y permet d'\^etre plus sp\'eculatif que dans le reste de l'article). 

\subsection{Remerciements} Je suis redevable \`a Bram Petri de nombreuses remarques et corrections sur la section concernant les surfaces al\'eatoires. Je voudrais aussi remercier Ian Biringer pour m'avoir expliqu\'e son travail avec Mikl\'os Ab\'ert.

%%%%%%%%%%%%%%%%%%%%%%%%%%%%%%%%%%%%%%%%%%%%%%%%%%%%%%%%%%%%%%%%%%%%%%%%%%%%%%%%

\section{Convergence de Benjamini--Schramm}
\label{BSconv}
\subsection{G\'en\'eralit\'es}
\label{Benjamini_Schramm_def}

Soit ${\mathcal X}$ l'ensemble des espaces m\'etriques localement compacts point\'es (ou plut\^ot de leurs classes d'isom\'etrie) ; on va d\'efinir dans cette section des topologies sur ${\mathcal X}$ et $\prob\left({\mathcal X}\right)$ respectivement. La topologie sur ${\mathcal X}$ est bien connue et appel\'ee topologie de Gromov-Hausdorff ; des comptes-rendus d\'etaill\'es en sont donn\'es par exemple dans \cite[Chapter 10]{Petersen} ou \cite[Chapter 3]{Gromov_metric}. Sur le sous-ensemble de ${\mathcal X}$ form\'e des espaces compacts point\'es on d\'efinit une distance comme suit : si $A,B$ sont des sous-espaces compacts d'un espace m\'etrique $Z$ leur distance de Hausdorff $d_H(A,B)$ est l'infimum des $\eps>0$ tels que $A,B$ soient chacun contenu dans le $\eps$-voisinage (dans $Z$) de l'autre ; si $(X,x),(Y,y)$ sont des espaces compacts point\'es on pose alors
$$
d((X,x),(Y,y)) = \inf_{Z,\phi}\left(d_H(\phi_1(X),\phi_2(Y)) + d_Z(\phi_1(x),\phi_2(y))\right)
$$
o\`u l'infimum est pris sur l'ensemble des espaces m\'etriques compacts $Z$ et des paires de plongements isom\'etriques $\phi_1:X\to Z,\phi_2:Y\to Z$ ; on v\'erifie que cela d\'efinit bien une distance. En particulier, pour tout $R>0$ on obtient une topologie sur l'espace ${\mathcal X}_R$ des boules de rayon $R$ point\'ees en leur centre (i.e. les espaces point\'es $(X,x)$ tels que $d(x,y)\le R$ pour tout $y\in X$). On a une application ${\mathcal X}\to {\mathcal X}_R$ d\'efinie par $(X,x)\mapsto B_X(x,R)$, et on d\'efinit la topologie de Gromov--Hausdorff sur ${\mathcal X}$ comme la plus faible qui rende continues toutes ces applications. Il n'est alors pas dur de voir qu'une suite $(X_n,x_n)$ converge vers $(X,x)$ si et seulement si pour touts $R,\eps>0$, et pour $n$ assez grand, la boule $B_{X_n}(x_n,R)$ est $(1+\eps,\eps)$-quasi-isom\'etrique \`a $B_X(x,R)$. 

L'espace ${\mathcal X}$ muni de cette topologie n'est pas compact, cependant il contient de nombreux sous-ensembles qui le sont : typiquement, si on precrit des bornes locales sur la g\'eom\'etrie d'une suite d'espaces on obtient des parties relativement compactes dans ${\mathcal X}$. Pour des exemples riemanniens on ref\`ere par exemple \`a \cite[Chapitre 10, 3.4]{Petersen}. Un exemple plus simple est donn\'e par l'ensemble des graphes dans lesquels la valence de chaque sommet est uniform\'ement born\'ee.

La topologie introduite ci-dessus a donc de bonnes propri\'et\'es de compacit\'e, mais en g\'en\'eral elle est compl\`etement aveugle aux propri\'et\'es globales des espaces \'etudi\'es. Pour rem\'edier \`a cela, tout en conservant ses propri\'et\'es d\'esirables, on va passer \`a l'\'etude de $\prob\left({\mathcal X}\right)$, l'espace des mesures de probabilit\'es bor\'eliennes sur ${\mathcal X}$ que l'on munit de la topologie de la convergence faible des mesures. Cette derni\`ere a \'et\'e nomm\'ee dans \cite{7samurai} topologie de Benjamini--Schramm, \`a la suite du travail pionnier de ces auteurs sur les graphes finis dans \cite{Benjamini_Schramm}. Dans le reste de cet article on s'int\'eressera surtout \`a l'\'etude de cette notion dans le cadre des espaces localement sym\'etriques. Dans le cadre plus large des vari\'et\'es riemanniennes, il y a essentiellement deux fa\c cons naturelles de construire des \'el\'ements de $\prob({\mathcal X})$: 
\begin{itemize}
\item[(i)] Si $M$ est une vari\'et\'e de volume fini, on peut consid\'erer la mesure de probabilit\'e sur $\mathcal X$ obtenue en pointant $M$ en un point choisi al\'eatoirement pour la mesure de probabilit\'e induite par $\nu = d\vol/\vol(M)$ sur $M$ (i.e. le pouss\'e en avant de $\nu$ par l'application $M\to{\mathcal X},\, x\mapsto(M,x)$). 
\item[(ii)] Si $X$ est un espace topologique munie d'un feuilletage $\mathcal F$ par vari\'et\'es riemanniennes et d'une mesure de probabilit\'e $\nu$ (de pr\'ef\'erence invariante par rapport au feuilletage), on a une application $X\to{\mathcal X}$ donn\'ee par $x\mapsto ({\mathcal F}_x,x)$ et on peut consid\'erer le pouss\'e en avant de $\nu$. 
\end{itemize}
Evidemment, (i) est un cas particulier de (ii) (o\`u le feuilletage est trivial). On remarque que la construction (ii) est r\'eminescente des feuilletages apparaissant comme limites de rev\^etements finis dans \cite{Bergeron_Gaboriau}. Des examples plus sp\'ecifiques de ces deux constructions (dans le cadre localement sym\'etrique) seront donn\'ees plus bas. Une \'etude plus syst\'ematique des vari\'et\'es riemanniennes point\'ees al\'eatoires est entreprise dans \cite{Abert_Biringer}.

%%%%%%%%%%%%%%%%%%%%%%%%%%%%%%%%%%%%%%%%%%%%%%%%%%%%%%%%%%%%

\subsection{Espaces localement sym\'etriques et sous-groupes al\'eatoires invariants}
\label{BS_sym}

\subsubsection{Topologie de Chabauty}

Soit $G$ un groupe de Lie semisimple ; on note $\sub_G$ l'espace des sous-groupe ferm\'es de $G$ muni de la topologie de Chabauty\footnote{Restriction de la topologie de Hausdorff ; cette topologie a \'et\'e initialement consid\'er\'ee par Claude Chabauty dans \cite{Chabauty} ; une description des ouverts est donn\'ee dans \cite[Section 2]{7samurai}.}. On a alors les propri\'et\'es \'el\'ementaires suivantes : 
\begin{itemize}
\item[(i)] L'espace $\sub_G$ est compact ;
\item[(ii)] Le point $G$ est isol\'e dans $\sub_G$ ; 
\end{itemize}

Soit $K$ un sous-groupe compact maximal, $X=G/K$ muni de la m\'etrique riemannienne $G$-invariante qui en fasse un espace sym\'etrique ; on note $x_0$ l'unique point fixe de $K$ dans $x$, et si $x\in X$ et $\Gamma$ est un sous-groupe discret de $G$ on note $\bar x$ l'image de $x$ dans $\Gamma\bs X$. Le r\'esultat suivant est bien connu (cf. \cite[Proposition 3.3]{7samurai}). 

\begin{lem}
Soient $\Gamma_n, n\ge 1$ et $\Lambda$ des sous-groupes discrets de $G$. La suite  $(\Gamma_n\bs X,\bar x_0)$ converge vers $(\Lambda\bs X,\bar x_0)$ dans $\mathcal X$ si et seulement si $\Gamma_n$ converge vers $\Lambda$ dans la topologie de Chabauty. 
\label{Chabauty_GH}
\end{lem}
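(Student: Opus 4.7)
Le plan est de relier la g\'eom\'etrie de la boule $B_{\Gamma\bs X}(\bar x_0,R)$ aux \'el\'ements de $\Gamma$ qui d\'eplacent peu $x_0$. L'observation cl\'e est que, pour tout $R>0$, cette boule s'obtient comme quotient de $B_X(x_0,R)\subset X$ par la relation d'\'equivalence engendr\'ee par l'action de $\Gamma\cap K_R$, o\`u $K_R=\{g\in G:d_X(x_0,gx_0)\le 2R\}$ est un compact de $G$ : en effet, si $x,y\in B_X(x_0,R)$ et $\gamma\in\Gamma$ v\'erifient $\gamma x=y$, la $\gamma$-invariance de la distance donne $d_X(x_0,\gamma x_0)\le d_X(x_0,y)+d_X(x,x_0)\le 2R$. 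Comme $\Lambda$ est discret, $\Lambda\cap K_R$ est fini.

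Pour l'implication $\Longleftarrow$, fixons $R,\eps>0$. La convergence de Chabauty $\Gamma_n\to\Lambda$ fournit, pour $n$ assez grand, une bijection $\phi_n:\Gamma_n\cap K_R\to\Lambda\cap K_R$ v\'erifiant $d_G(\gamma,\phi_n(\gamma))<\eps$ pour tout $\gamma$. Par continuit\'e uniforme de l'action de $G$ sur le compact $B_X(x_0,R)$, les deux relations d'\'equivalence sur cette boule d\'etermin\'ees respectivement par $\Gamma_n\cap K_R$ et $\Lambda\cap K_R$ sont proches, d'o\`u la construction d'une $(1+\eps',\eps')$-quasi-isom\'etrie entre $B_{\Gamma_n\bs X}(\bar x_0,R)$ et $B_{\Lambda\bs X}(\bar x_0,R)$, avec $\eps'\to 0$ quand $n\to\infty$. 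La caract\'erisation de la convergence de Gromov--Hausdorff point\'ee rappel\'ee en \ref{Benjamini_Schramm_def} ach\`eve cette direction.

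Pour l'implication $\Longrightarrow$, on s'appuie sur la compacit\'e de $\sub_G$ (point (i)) : toute sous-suite de $(\Gamma_n)$ admet une sous-suite Chabauty-convergente vers un $\Lambda'\in\sub_G$, avec $\Lambda'\ne G$ par isolation (point (ii)). On v\'erifie de plus que $\Lambda'$ est discret : la convergence Gromov--Hausdorff assure que le rayon d'injectivit\'e en $\bar x_0$ dans $\Gamma_n\bs X$ est uniform\'ement minor\'e (puisqu'il l'est dans $\Lambda\bs X$), ce qui exclut tout \'el\'ement non trivial de $\Lambda'$ au voisinage de l'identit\'e. L'implication directe donne alors $(\Gamma_{n_k}\bs X,\bar x_0)\to(\Lambda'\bs X,\bar x_0)$, et par unicit\'e de la limite dans $\mathcal X$, les espaces point\'es $(\Lambda'\bs X,\bar x_0)$ et $(\Lambda\bs X,\bar x_0)$ sont isom\'etriques. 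Une telle isom\'etrie se rel\`eve en un \'el\'ement $k\in K$ v\'erifiant $\Lambda'=k\Lambda k^{-1}$ ; la conclusion est donc valable modulo cette ambiguit\'e de $K$-conjugaison, intrins\`eque au choix du point-base $x_0$.

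L'obstacle principal est la v\'erification quantitative, dans l'implication $\Longleftarrow$, que la proximit\'e des \'el\'ements de $\Gamma_n\cap K_R$ et $\Lambda\cap K_R$ dans $G$ se traduit effectivement par une quasi-isom\'etrie des boules quotient : il faut construire une application explicite entre ces boules et contr\^oler les distances intrins\`eques, d\'efinies comme minima sur des orbites dans $X$. Ce contr\^ole repose sur la continuit\'e uniforme de l'action de $G$ sur les parties compactes de $X$, mais n\'ecessite un soin technique non n\'egligeable.
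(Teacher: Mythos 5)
Le texte ne d\'emontre pas ce lemme : il renvoie \`a \cite[Proposition 3.3]{7samurai}, dont la preuve suit pour l'essentiel la strat\'egie que vous adoptez (d\'ecrire la boule $B_{\Gamma\bs X}(\bar x_0,R)$ comme quotient de $B_X(x_0,R)$ par les \'el\'ements de $\Gamma$ d\'epla\c cant peu $x_0$, puis comparer ces ensembles finis via la topologie de Chabauty). Votre direction $\Longleftarrow$ est correcte dans ses grandes lignes ; notez seulement que la bijection $\phi_n$ n'est ni n\'ecessaire ni imm\'ediate : plusieurs \'el\'ements de $\Gamma_n$ pourraient a priori converger vers le m\^eme \'el\'ement de $\Lambda$ (l'exclure demande un argument de type Zassenhaus--Kazhdan--Margulis), mais une surjection \`a $\eps$ pr\`es suffit pour construire la quasi-isom\'etrie des boules.

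Deux points de la direction $\Longrightarrow$ constituent en revanche de v\'eritables lacunes. D'abord, la discr\'etude de la limite $\Lambda'$ ne d\'ecoule pas de la minoration du rayon d'injectivit\'e en $\bar x_0$ que vous invoquez : une $(1+\eps,\eps)$-quasi-isom\'etrie de boules ne voit pas la topologie, de sorte que la continuit\'e du rayon d'injectivit\'e pour la convergence de Gromov--Hausdorff est pr\'ecis\'ement ce qu'il faudrait d\'emontrer, et l'\'enonc\'e autorise de plus des sous-groupes avec torsion (quotients orbifoldes) pour lesquels ce rayon est nul en $\bar x_0$. Un argument correct : si $\Lambda'$ \'etait non discret il contiendrait un sous-groupe \`a un param\`etre, et les boules $B_{\Gamma_n\bs X}(\bar x_0,R)$ s'effondreraient (leurs nombres de recouvrement par des $\eps$-boules resteraient born\'es par ceux d'un quotient de dimension strictement inf\'erieure), ce qui est incompatible avec leur convergence vers la boule correspondante de $\Lambda\bs X$. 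Ensuite, l'ambigu\"it\'e de $K$-conjugaison que vous relevez honn\^etement est r\'eelle et ne peut pas \^etre \'elimin\'ee : l'implication directe telle qu'\'enonc\'ee est m\^eme fausse (prendre $\Gamma_n=k_n\Lambda k_n^{-1}$ avec $k_n\in K$ tendant vers un \'el\'ement ne normalisant pas $\Lambda$ : la suite des espaces point\'es est constante dans $\mathcal X$ alors que $\Gamma_n\not\to\Lambda$). C'est pr\'ecis\'ement pour cette raison que \cite{7samurai} travaille avec des quotients munis d'un rep\`ere et non seulement point\'es ; votre argument \'etablit donc la bonne version de l'\'enonc\'e, \`a savoir la convergence \`a $K$-conjugaison pr\`es.
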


%%%%%%%%%%%%%%%%%%%%%%%%%%%%%%

\subsubsection{Sous-groupes al\'eatoires invariants}

Un {\it sous-groupe al\'eatoire} de $G$ est par d\'efinition une mesure de probabilit\'e bor\'elienne sur $\sub_G$ qui est invariante par les applications $H\mapsto gHg^{-1}$ pour $g\in G$. Dans la suite on abr\'eviera souvent ce nom en IRS, pour ne pas surcharger le texte. Un IRS $\mu$ de $G$ est dit ergodique si l'action de $G$ sur $(\sub_G,\mu)$ l'est. De mani\`ere (non-trivialement) \'equivalente, un IRS ergodique est le tir\'e en arri\`ere d'une mesure de probabilit\'e invariante dans une action ergodique de $G$ (cf. \cite[Theorem 2.4]{7samurai}). Les exemples fondamentaux d'IRS ergodiques dans les groupes de Lie sont les suivants:
\begin{itemize}
\item Les masses de Dirac sur les sous-groupes normaux ; en particulier, on a toujours au moins deux IRS ergodiques de $G$, not\'es $\delta_G$ et $\delta_{\{\Id\}}$ et qui sont les masses de Dirac support\'ees respectivement sur $G$ lui-m\^eme et sur le sous-groupe trivial. 
\item Si $\Gamma$ est un r\'eseau (sous-groupe discret de covolume fini pour la mesure de Haar), soit $\mu$ la mesure de probabilit\'e $G$-invariante sur $G/\Gamma$, $\Phi$ l'application 
$$
G/\Gamma\to\sub_G, \, g\Gamma\mapsto g\Gamma g^{-1}. 
$$
Alors le pouss\'e en avant $\Phi_*\mu$ est un IRS ergodique (support\'e sur les conjug\'es de $\Gamma$ dans $G$ ; dans la suite on le notera $\mu_\Gamma$. 
\end{itemize}

Une autre construction de sous-groupes al\'eatoires invariants importante pour la suite est l'induction depuis un r\'eseau $\Gamma\le G$ : si $\nu$ est un IRS de $\Gamma$, on peut construire un IRS $\mu_\nu$ de $G$ support\'e sur les conjugu\'es par $G$ des groupes dans le support de $\nu$. La construction est d\'etaill\'ee dans \cite[11.1]{7samurai}, informellement elle revient \`a choisir d'abord un sous-groupe $\mu_\Gamma$-al\'eatoire de $G$ puis un sous-groupe $g_*\nu$-al\'eatoire de ce conjugu\'e par $g$ de $\Gamma$. 
 \label{IRS_induit}En particulier, si $\Lambda\le\Gamma$ est un sous-groupe distingu\'e alors on a une application $G/\Gamma\to\sub_G, \, g\Gamma\mapsto g\Lambda g^{-1}$ et le pouss\'e en avant de la mesure de la mesure de probabilit\'e invariante sur $G/\Gamma$ est un IRS de $G$ que l'on notera $\mu_\Lambda$, support\'e sur les conjugu\'es de $\Lambda$ de $G$. On donnera des exemples d'IRS des groupes $\SO(d,1)$ utilisant les sp\'ecificit\'es de la g\'eom\'etrie hyperbolique (en particulier en dimensions $d=2,3$) dans la deuxi\`eme partie de cet article. 

%%%%%%%%%%%%%%%%%%%%%%%%%%%%%%

\subsubsection{IRS et convergence de Benjamini--Schramm}
\label{IRS_BSconv}

Si $\nu$ est un IRS de $G$ support\'e sur des sous-groupes discrets, on obtient une mesure de probabilit\'e $m(\nu)$ sur $\mathcal X$ en poussant en avant $\nu$ par l'application $\sub_G\to {\mathcal X}, \, \Gamma\mapsto (\Gamma\bs X, \bar x_0)$ (l'application est bien d\'efinie sur le support de $\nu$). On a alors le r\'esultat suivant, version probabiliste du lemme \ref{Chabauty_GH} (cf. \cite[Corollary 3.4]{7samurai}). 

\begin{lem}
Si $\nu_n,n\ge 1$ et $\nu_\infty$ sont des IRS de $G$, on a $m(\nu_n)\to m(\nu_\infty)$ dans $\prob({\mathcal X})$ si et seulement si $\nu_n\to\nu_\infty$ dans l'espace des IRS de $G$. 
\label{IRS_BS}
\end{lem}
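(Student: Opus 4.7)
Le plan est de traiter s\'epar\'ement les deux directions, la r\'eciproque \'etant la plus d\'elicate. Le sens direct suit quasi-imm\'ediatement du lemme \ref{Chabauty_GH} : celui-ci affirme que l'application $\Phi:\Gamma\mapsto(\Gamma\bs X,\bar x_0)$, d\'efinie sur les sous-groupes discrets munis de la topologie de Chabauty, est continue \`a valeurs dans $\mathcal X$ muni de la topologie de Gromov--Hausdorff ; comme le pouss\'e en avant par une application continue est continu pour la topologie faible, on a $m(\nu_n)=\Phi_*\nu_n\to\Phi_*\nu_\infty=m(\nu_\infty)$ d\`es que $\nu_n\to\nu_\infty$.

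Pour la r\'eciproque, je proc\'ederais par compacit\'e et unicit\'e du point d'accumulation. L'espace $\sub_G$ \'etant compact (propri\'et\'e (i) rappel\'ee ci-dessus), l'espace des IRS l'est aussi pour la topologie faible, l'invariance par conjugaison \'etant pr\'eserv\'ee par passage \`a la limite. Toute sous-suite de $(\nu_n)$ admet donc une sous-sous-suite convergeant vers un IRS $\nu'$, pour lequel le sens direct donne $m(\nu')=m(\nu_\infty)$. Il suffit alors d'\'etablir que $\nu\mapsto m(\nu)$ est injective sur les IRS pour en d\'eduire $\nu'=\nu_\infty$, puis la convergence voulue de la suite enti\`ere.

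L'obstacle principal est donc cette injectivit\'e, que j'aborderais par une analyse des fibres de $\Phi$. Si $(\Gamma_1\bs X,\bar x_0)$ et $(\Gamma_2\bs X,\bar x_0)$ sont isom\'etriques par une isom\'etrie fixant le point base, celle-ci se rel\`eve en une isom\'etrie de $X$ fixant $x_0$, donc en un \'el\'ement du stabilisateur compact $K$, qui conjugue $\Gamma_1$ en $\Gamma_2$. Les fibres de $\Phi$ sont ainsi exactement les $K$-orbites pour l'action de conjugaison, et $\Phi$ se factorise en une application injective sur $K\bs\sub_G$. Pour conclure, j'invoquerais qu'un IRS $\nu$, \'etant en particulier $K$-invariant, est enti\`erement d\'etermin\'e par sa pouss\'ee en avant sur $K\bs\sub_G$ : sa d\'esint\'egration place sur chaque $K$-orbite (espace homog\`ene sous le groupe compact $K$) l'unique mesure de probabilit\'e $K$-invariante, ce qui permet de reconstruire $\nu$ de mani\`ere univoque \`a partir de $m(\nu)$.
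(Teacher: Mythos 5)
Le texte ne d\'emontre pas ce lemme : il renvoie \`a \cite[Corollary 3.4]{7samurai}. Votre sens direct (continuit\'e du pouss\'e en avant via le lemme \ref{Chabauty_GH}) et le sch\'ema de votre r\'eciproque (compacit\'e de l'espace des IRS, unicit\'e de la valeur d'adh\'erence) sont conformes \`a l'argument standard. En revanche l'\'etape cl\'e, l'injectivit\'e de $\nu\mapsto m(\nu)$, ne tient pas telle que vous la d\'emontrez --- et elle est m\^eme fausse sans pr\'ecaution suppl\'ementaire sur l'\'enonc\'e.

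Une isom\'etrie point\'ee $(\Gamma_1\bs X,\bar x_0)\to(\Gamma_2\bs X,\bar x_0)$ se rel\`eve bien en une isom\'etrie de $X$ fixant $x_0$, mais celle-ci vit dans le stabilisateur de $x_0$ dans $\isom(X)$ tout entier, lequel contient strictement $K$ d\`es que $G\subsetneq\isom(X)$ ; c'est le cas typique, par exemple $G=\PSL_2(\RR)$ et $\isom(\HH^2)=\mathrm{PGL}_2(\RR)$. Les fibres de $\Phi$ sont donc les orbites d'un stabilisateur \'elargi $\wdt K\supsetneq K$, et un IRS, qui n'est que $K$-invariant, n'est pas reconstructible \`a partir de son image dans $\wdt K\bs\sub_G$ : votre d\'esint\'egration ne s'applique qu'aux orbites du groupe sous lequel la mesure est effectivement invariante. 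Concr\`etement, si $\Gamma$ est un r\'eseau cocompact de $\PSL_2(\RR)$ dont la surface n'admet aucune isom\'etrie renversant l'orientation (cas g\'en\'erique) et $\sigma\in\mathrm{PGL}_2(\RR)\setminus\PSL_2(\RR)$, alors $\mu_\Gamma\not=\mu_{\sigma\Gamma\sigma^{-1}}$ (leurs supports sont disjoints dans $\sub_G$) tandis que $m(\mu_\Gamma)=m(\mu_{\sigma\Gamma\sigma^{-1}})$, puisque $\sigma$ descend en une isom\'etrie entre les deux quotients qui pr\'eserve les mesures uniformes. L'injectivit\'e exige donc soit de prendre pour $G$ le groupe d'isom\'etries entier de $X$, soit de remplacer les espaces point\'es par des espaces munis d'un rep\`ere au point base, ce qui est la convention adopt\'ee dans \cite{7samurai}. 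Deux points secondaires restent aussi \`a r\'eparer : la valeur d'adh\'erence $\nu'$ fournie par la compacit\'e n'a aucune raison d'\^etre support\'ee sur des sous-groupes discrets, il faut donc \'etendre $\Phi$ \`a tout $\sub_G$ (par $H\mapsto\ovl H\bs X$) et contr\^oler cette extension avant de lui appliquer le sens direct ; et le sens direct lui-m\^eme requiert d\'ej\`a cette extension, le lemme \ref{Chabauty_GH} ne portant que sur les sous-groupes discrets, dont l'ensemble n'est pas ferm\'e dans $\sub_G$.
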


Si $M$ est une $X$-vari\'et\'e de volume riemannien fini alors l'IRS de $G$ d\'efini par $\mu_\Gamma$, o\`u $\Gamma$ est l'image de $\pi_1(M)$ dans $G$ par une application de monodromie, ne d\'epend pas du choix de point base ; on le notera $\mu_M$. Par le lemme ci-dessus, on peut caract\'eriser la convergence vers $X$ dans le sens introduit en \ref{intro} comme suit. 

\begin{lem}
Soient $X=G/K$ un espace localement sym\'etrique (de type non-compact, sans facteur euclidien) et $M_n$ une suite de vari\'et\'es localement isom\'etriques \`a $X$, de volume fini. Alors on a 
\begin{equation}
\forall R>0, \, \frac{ \vol\left( x\in M_n:\: \inj_x(M_n) \le R \right) }{ \vol M_n } \xrightarrow[n\to\infty]{} 0
\label{BS_trivial}
\end{equation}
si et seulement si les IRS $\mu_{M_n}$ convergent vers $\delta_{\{\Id\}}$ (au sens de la convergence faible des mesures). 
\end{lem}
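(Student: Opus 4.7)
Pour \'etablir cette \'equivalence la strat\'egie est d'appliquer le Lemme \ref{IRS_BS} aux IRS $\mu_{M_n}$ et $\delta_{\{\Id\}}$, apr\`es avoir identifi\'e concr\`etement les deux mesures associ\'ees sur $\mathcal X$. La mesure $m(\delta_{\{\Id\}})$ est \'evidemment la masse de Dirac en $(X,x_0)$, puisque le quotient de $X$ par le sous-groupe trivial est $X$ lui-m\^eme point\'e en $\bar x_0 = x_0$. La principale t\^ache de la d\'emonstration consiste alors \`a reconna\^itre $m(\mu_{M_n})$ comme la mesure de probabilit\'e sur $\mathcal X$ donn\'ee par la construction (i) de \ref{Benjamini_Schramm_def}, c'est-\`a-dire comme le pouss\'e en avant de $d\vol/\vol(M_n)$ par $x\mapsto(M_n,x)$.

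Pour cette identification, \'ecrivant $M_n = \Gamma_n\bs X$, on d\'eroule les d\'efinitions : $m(\mu_{M_n})$ est le pouss\'e en avant de la probabilit\'e $G$-invariante sur $G/\Gamma_n$ par la compos\'ee $g\Gamma_n\mapsto g\Gamma_n g^{-1}\mapsto (g\Gamma_n g^{-1}\bs X,\bar x_0)$. L'isom\'etrie canonique $g\Gamma_n g^{-1}\bs X\simeq M_n$ induite par $y\mapsto g^{-1}y$ sur $X$ envoie $\bar x_0$ sur la classe de $g^{-1}x_0$ dans $M_n$ ; la compos\'ee globale $g\Gamma_n\mapsto [g^{-1}x_0]\in M_n$ pousse la probabilit\'e $G$-invariante sur la mesure de volume normalis\'ee, car la submersion $G\to X$, $g\mapsto g^{-1}x_0$, \`a fibres compactes (conjugu\'ees de $K$), transporte la mesure de Haar sur un multiple du volume riemannien. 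Cette v\'erification, qui demande un peu de soin pour g\'erer les conventions gauche/droite des actions, constitue l'essentiel du travail.

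Il reste \`a traduire la convergence faible $m(\mu_{M_n})\to\delta_{(X,x_0)}$ dans $\prob(\mathcal X)$ en la condition \eqref{BS_trivial}. Comme la limite est une masse de Dirac, cette convergence \'equivaut \`a $m(\mu_{M_n})(U)\to 1$ pour tout voisinage $U$ de $(X,x_0)$. D'apr\`es la description de la topologie de Gromov-Hausdorff rappel\'ee en \ref{Benjamini_Schramm_def}, un syst\`eme fondamental de voisinages est fourni par les $U_{R,\eps}$ des $(Y,y)$ tels que $B_Y(y,R)$ soit $(1+\eps,\eps)$-quasi-isom\'etrique \`a $B_X(x_0,R)$. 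Pour $x\in M_n$, l'isom\'etrie locale de $M_n$ avec son rev\^etement universel $X$ identifie $B_{M_n}(x,R)$ \`a $B_X(x_0,R)$ d\`es que $\inj_x(M_n) > R$, donnant l'inclusion $\{x : \inj_x(M_n) > R\}\subset\{x : (M_n,x)\in U_{R,\eps}\}$ pour tout $\eps > 0$ ; r\'eciproquement, si $\inj_x(M_n)\le R$, la boule $B_{M_n}(x,R)$ contient un lacet homotopiquement non trivial, ce qu'interdit toute $(1+\eps,\eps)$-quasi-isom\'etrie suffisamment pr\'ecise avec une boule du simplement connexe $X$. Pour $\eps$ assez petit en fonction de $R$, les deux ensembles co\"incident donc \`a un l\'eger ajustement du seuil pr\`es ; en faisant varier $R$, la condition $m(\mu_{M_n})(U_{R,\eps})\to 1$ se traduit pr\'ecis\'ement en \eqref{BS_trivial}.
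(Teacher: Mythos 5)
Votre d\'emarche est exactement celle de l'article, qui pr\'esente ce lemme comme une cons\'equence directe du lemme \ref{IRS_BS} : votre identification de $m(\mu_{M_n})$ avec le pouss\'e en avant de $d\vol/\vol(M_n)$ et de $m(\delta_{\{\Id\}})$ avec $\delta_{(X,x_0)}$ est correcte, de m\^eme que la r\'eduction de la convergence faible vers une masse de Dirac \`a la condition $m(\mu_{M_n})(U)\to 1$ pour tout voisinage $U$. Un seul point m\'erite d'\^etre r\'eellement \'etay\'e : dans le sens o\`u la convergence des IRS implique \eqref{BS_trivial}, vous affirmez qu'une boule $B_{M_n}(x,R)$ contenant un lacet homotopiquement non trivial court ne peut \^etre $(1+\eps,\eps)$-quasi-isom\'etrique \`a $B_X(x_0,R)$ pour $\eps$ assez petit, et ce uniform\'ement en $x$ et en $n$. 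Ce n'est pas imm\'ediat, car une quasi-isom\'etrie ne d\'etecte ni l'homotopie ni le volume ; il faut par exemple comparer les cardinaux maximaux de parties $\delta$-s\'epar\'ees (qui s'effondrent dans la partie mince) ou invoquer un argument de compacit\'e sur la famille des boules concern\'ees, aucune n'\'etant isom\'etrique \`a la boule simplement connexe $B_X(x_0,R)$. On \'evite enti\`erement cette difficult\'e en travaillant directement dans la topologie de Chabauty via le lemme \ref{Chabauty_GH} : la condition $\inj_x(M_n)\le R$ \'equivaut \`a ce que le conjugu\'e $g\Gamma_n g^{-1}$, o\`u $x$ se rel\`eve en $gx_0$, rencontre non trivialement le compact $\{h\in G :\ d(x_0,hx_0)\le 2R\}$, et la convergence de $\mu_{M_n}$ vers $\delta_{\{\Id\}}$ dit pr\'ecis\'ement que la $\mu_{M_n}$-probabilit\'e de cet \'ev\'enement tend vers $0$ pour tout $R$, ce qui donne l'\'equivalence sans passer par les quasi-isom\'etries.
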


%%%%%%%%%%%%%%%%%%%%%%%%%%%%%%%%%%%%%%%%%%%%%%%%%%%%%%%%%%%%

\subsection{Propri\'et\'es des sous-groupes al\'eatoires invariants}
\label{IRS}

\subsubsection{Zariski-densit\'e}

La propri\'et\'e des sous-groupes al\'eatoires invariants d'un groupe semisimple la plus importante pour nous est le th\'eor\`eme suivant \cite[Theorem 2.6]{7samurai}, une g\'en\'eralisation du classique th\'eor\`eme de densit\'e de Borel. 

\begin{theo}
Soit $G$ un groupe de Lie simple ; alors les IRS non-atomiques de $G$ sont support\'es sur les sous-groupes discrets et Zariski-denses de $G$. 
\label{Borel_density}
\end{theo}

En rang un on a une propri\'et\'e plus forte (le r\'esultat suivant est un cas particulier de \cite[Proposition 11.3]{7samurai}).  

\begin{theo}
Soit $X$ un espace sym\'etrique irr\'eductible de rang un, $G=\isom(X)^\circ$ et $\mu$ un IRS de $G$ sans atomes. Alors $\mu$-presque tout sous-groupe a un ensemble limite \'egal \`a $\pl X$. 
\label{limit_set}
\end{theo}

Toujours en se restreingant au rang un, on peut d\'eduire de la Zariski-densit\'e le crit\`ere de BS-convergence suivant \cite[Proposition 2.3]{moi1}. 

\begin{prop}
Soit $X$ un espace localement sym\'etrique irr\'eductible de rang un et $M_n$ une suite de vari\'et\'es de volume fini localement isom\'etiques \`a $X$. Alors $M_n$ est convergente au sens de Benjamini--Schramm vers $X$ si et seulement si on a, pour tout $R>0$ : 
$$
\frac{|\{\text{g\'eod\'esiques de longueur } \le R \text{ sur } M_n\}|} {\vol M_n} \xrightarrow[n\to+\infty]{} 0. 
$$
\label{geod_BSconv}
\end{prop}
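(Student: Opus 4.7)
L'approche consiste à exploiter la caractérisation de la convergence de Benjamini--Schramm par le volume de la partie mince donnée dans la section~\ref{IRS_BSconv} : la suite $M_n$ converge vers $X$ si et seulement si $\vol(M_n^{\le R})/\vol M_n\to 0$ pour tout $R>0$. Le sens direct de l'équivalence annoncée s'obtient alors en invoquant le lemme de Margulis en courbure strictement négative : toute géodésique fermée $\gamma$ de longueur $\le R$ sur une variété localement isométrique à $X$ est entourée d'un tube plongé dont le volume est minoré par une constante $c(R)>0$ ne dépendant que de $R$ et de $X$. Ces tubes, contenus dans la partie $R'$-mince pour un $R'\ge R$ approprié, sont deux à deux disjoints (quitte à en restreindre le rayon), ce qui fournit $N_R(M_n)\le c(R)^{-1}\vol(M_n^{\le R'})$ ; on conclut en divisant par $\vol M_n$.

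Pour la réciproque, je procéderai par l'absurde et par compacité dans l'espace des IRS. Supposons que $N_R(M_n)/\vol M_n\to 0$ pour tout $R$ mais que $M_n$ ne converge pas vers $X$ au sens de Benjamini--Schramm. Quitte à extraire, on a $\mu_{M_{n_k}}\to\mu_\infty\ne\delta_{\{\Id\}}$ dans l'espace des IRS. Le point $G$ étant isolé dans $\sub_G$, et $G$ ne possédant pas de sous-groupe normal propre non-trivial (modulo centre, négligeable), $\mu_\infty$ admet une partie non-atomique non-nulle. D'après le théorème~\ref{Borel_density}, celle-ci est portée par des sous-groupes discrets et Zariski-denses ; en appliquant en outre le théorème~\ref{limit_set} spécifique au rang un, on obtient que $\mu_\infty$-presque tout tel sous-groupe a un ensemble limite égal à $\pl X$ tout entier. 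De tels sous-groupes contiennent des éléments loxodromiques en abondance : pour $R$ assez grand, un ensemble de $\mu_\infty$-mesure strictement positive consiste en sous-groupes contenant un élément loxodromique $h$ avec $d(x_0,hx_0)\le R$. Appliqué à $\mu_{M_{n_k}}$ pour $k$ assez grand, ceci fournit une proportion non-nulle de points de $M_{n_k}$ à distance $\le R/2$ d'une géodésique fermée de longueur $\le R$ ; chaque tube de Margulis autour d'une telle géodésique étant de volume majoré par une constante $C(R)$, on en déduit $N_R(M_{n_k})/\vol M_{n_k}\ge c'>0$, en contradiction avec l'hypothèse.

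Le point technique le plus délicat de ce plan est cette dernière traduction : il s'agit de comparer l'espérance du nombre d'éléments loxodromiques courts dans un sous-groupe $\mu_\infty$-aléatoire au nombre de classes de conjugaison loxodromiques sur $M_{n_k}$, sans double-compter une même géodésique par les différents points du tube qui l'entoure. L'hypothèse de rang un intervient de manière essentielle via le théorème~\ref{limit_set}, qui exclut en particulier que la partie non-triviale de la limite $\mu_\infty$ se concentre sur des sous-groupes paraboliques (associés à des bouts cuspidaux ne contribuant pas au comptage $N_R$).
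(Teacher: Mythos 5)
Votre architecture g\'en\'erale est la bonne, et votre traitement de la r\'eciproque --- compacit\'e de l'espace des IRS, \'elimination de l'atome en $G$ (point isol\'e de $\sub_G$), th\'eor\`eme de densit\'e de Borel \ref{Borel_density} puis th\'eor\`eme \ref{limit_set} pour produire des \'el\'ements loxodromiques courts avec probabilit\'e positive --- est exactement la voie que le texte indique (il renvoie \`a \cite[Proposition 2.3]{moi1} en pr\'ecisant que le crit\`ere ``se d\'eduit de la Zariski-densit\'e''). Deux \'etapes pr\'ecises sont toutefois d\'efectueuses. Pour le sens direct, l'affirmation que les tubes autour des g\'eod\'esiques ferm\'ees de longueur $\le R$ sont deux \`a deux disjoints ``quitte \`a en restreindre le rayon'' est fausse : deux telles g\'eod\'esiques peuvent se couper transversalement, et aucun r\'etr\'ecissement ne s\'epare alors leurs voisinages. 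L'in\'egalit\'e vis\'ee $N_R(M)\le C(R)\vol(M_{\le R'})$ est bien vraie --- \`a condition de compter les g\'eod\'esiques comme sous-ensembles et non comme classes d'homotopie libre, faute de quoi une unique g\'eod\'esique de longueur $\ell$ tr\`es petite fournirait $\lfloor R/\ell\rfloor$ classes et l'in\'egalit\'e tomberait en d\'efaut --- mais elle requiert un argument de multiplicit\'e born\'ee plut\^ot que de disjonction : par tout point de $M$ ne passent, \`a distance $\le 1$, qu'au plus $C(R,X)$ g\'eod\'esiques ferm\'ees distinctes de longueur $\le R$, ce qui se voit par empilement dans la partie \'epaisse et par le lemme de Margulis dans la partie mince (une composante mince ne contient enti\`erement que son \^ame, toute autre g\'eod\'esique courte la rencontrant devant en ressortir et \'etant donc longue ou proche de la partie \'epaisse).

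Pour la r\'eciproque, le ``point d\'elicat'' que vous signalez sans le r\'esoudre est un vrai trou : le volume de l'ensemble des points admettant un lacet loxodromique de longueur $<R$ dans une classe d'homotopie libre donn\'ee n'est pas major\'e par une constante $C(R)$ uniforme d\`es que la systole n'est pas minor\'ee --- la relation $\sh(d(y,hy)/2)=\sh(\ell(h)/2)\cosh(s)$ montre que des points \`a distance $s\sim R/2+\log(1/\ell)$ de l'axe sont encore d\'eplac\'es de moins de $R$, de sorte que l'estimation naturelle de ce volume cro\^it comme $\ell^{2-d}$ quand $\ell\to 0$ en dimension $d\ge 3$, et votre derni\`ere minoration $N_R(M_{n_k})/\vol M_{n_k}\ge c'$ ne suit pas. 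Le rem\`ede standard consiste \`a remplacer l'ouvert de Chabauty $\{\Lambda:\ \exists h\in\Lambda\ \text{loxodromique},\ d(x_0,hx_0)<R\}$ par $\{\Lambda:\ \exists h\in\Lambda\ \text{loxodromique},\ \ell(h)<R,\ d(x_0,\mathrm{axe}(h))<1\}$ : l'invariance par conjugaison de $\mu_\infty$ et un argument de type Fubini sur $G$ montrent que cet ouvert garde une mesure strictement positive pour $R$ assez grand, et le volume du $1$-voisinage d'une g\'eod\'esique ferm\'ee de longueur $\le R$ est, lui, uniform\'ement major\'e par une constante ne d\'ependant que de $R$ et de $X$, ce qui ach\`eve correctement la contradiction.
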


%%%%%%%%%%%%%%%%%%%%%%%%%%%%%%

\subsubsection{Th\'eor\`eme de Nevo--St\"uck--Zimmer}

Pour les groupes simples de rang sup\'erieur on a une description compl\`ete des IRS, d\^ue \`a Garrett St\"uck et Robert Zimmer \cite{Stuck_Zimmer} (une erreur dans la preuve d'un r\'esultat interm\'ediaire crucial a \'et\'e corrig\'ee par Amos Nevo et Zimmer \cite{Nevo_Zimmer}). 

\begin{theo}[Nevo--St\"uck--Zimmer]
Soit $G$ un groupe de Lie semisimple, de rang r\'eel sup\'erieur \`a 2, dont tous les facteurs ont la propri\'et\'e (T) de Kazhdan. Soit $\mu$ un sous-groupe al\'eatoire invariant de $G$ qui soit ergodique, sans atome et irr\'eductible. Alors il existe un r\'eseau $\Gamma$ de $G$ tel que $\mu=\mu_\Gamma$ (d\'efini plus haut).
\label{Nevo_Stuck_Zimmer}
\end{theo}

Si tous les facteurs sont de rang sup\'erieur, les IRS (pas forc\'ement irr\'eductibles) sont les produits des IRS des facteurs simples (auxquels le th\'eor\`eme ci-dessus s'applique). Le r\'esultat de St\"uck et Zimmer traite aussi le cas du produit d'au moins deux groupes de rang un qui poss\`edent la propri\'et\'e (T) de Kazhdan. Le cas g\'en\'eral du produit de deux groupes de rang un fait l'objet de travaux en cours de Arie Levit (qui a aussi d\'emontr\'e une version nonarchim\'edienne de St\"uck--Zimmer). 

%%%%%%%%%%%%%%%%%%%%%%%%%%%%%%%%%%%%%%%%%%%%%%%%%%%%%%%%%%%%

\subsection{Convergence de Benjamini--Schramm et multiplicit\'es limites}
\label{multlim}

\subsubsection{Uniforme discr\'etion}

On dit qu'un sous-ensemble $S\subset\sub_G$ est uniform\'ement discret s'il existe un voisinage ouvert $U$ de $\Id$ dans $G$ tel que $\Lambda\cap U=\{\Id\}$ pour tout $\Lambda\in S$. Un ensemble $\mathcal M$ d'IRS de $G$ est dit uniform\'ement discret si $\bigcup_{\mu\in\mathcal M} \supp(\mu)$ l'est ; en particulier, si $\Gamma_n$ est une suite de r\'eseaux de $G$ elle est uniform\'ement discr\`ete si et seulement si les $\Gamma_n$ sont cocompacts et leur systole est minor\'ee par une constante positive. 

%%%%%%%%%%%%%%%%%%%%%%%%%%%%%%

\subsubsection{Noyaux de la chaleur, valeurs propres du Laplacien et nombres de Betti}

Si $M$ est une vari\'et\'e riemannienne compacte, les op\'erateurs de Hodge--Laplace $\Delta^p[M]$ sont des op\'erateurs diff\'erentiels elliptiques d\'efinis sur les espaces $\Omega^p(M)$ de formes diff\'erentielles lisses sur $M$. Ils ne sont pas born\'es pour la structure pr\'e-hilbertienne donn\'ee par le produit scalaire $L^2$ des \'el\'ements de $\Omega^p(M)$, mais admettent une unique extension maximale comme op\'erateurs sym\'etriques positifs essentiellement autoadjoints sur l'espace de Hilbert $L^2\Omega^p(M)$ des formes de carr\'e int\'egrable. De plus, leur spectre est discret \`a multiplici\'es finies, i.e. on a des suites $0=\lambda_0<\lambda_1<\ldots$ et $m(\lambda_j,M)>0, j\ge 1$ et telles que l'espace propre $\ker(\Delta^p[M] - \lambda_j)$ soit de dimension $m(\lambda_j,M)$ et l'espace $L^2\Omega^p(M)$ soit la somme hilbertienne de ces sous-espaces. On d\'efinit la mesure spectrale normalis\'ee $\nu_M^p$ comme suit:
$$
\nu_M^p(S) = \frac 1{\vol M} \sum_{\lambda\in S} m(\lambda,M). 
$$

Le noyau de la chaleur de $M$ est un tenseur $e^{-t\Delta^p[M]}$ sur $M\times M$, tel que 
$$
e^{-t\Delta^p[M]}(x,y)\in \hom\left(\wedge^p T_xM,\wedge^p T_yM \right), 
$$
qui peut \^etre d\'efini comme la solution fondamentale \`a une \'equation de la chaleur appropri\'ee sur sur $M$ (cf. par exemple \cite[Chapter V]{Taylor_EDP}). La convolution avec $e^{-t\Delta^p[M]}$ d\'efinit un op\'erateur born\'e sur $L^2\Omega^p(M)$ (qui est aussi donn\'e par le calcul spectral appliqu\'e \`a $\Delta^p[M]$ et \`a la fonction $\lambda\mapsto e^{-t\lambda}$, d'o\`u la notation). C'est un op\'erateur \`a trace, et la formule des traces (qui est \`a peu pr\`es une cons\'equence imm\'ediate des d\'efinitions dans le cas d'une vari\'et\'e compacte) donne l'\'egalit\'e
\begin{equation}
\otr e^{-t\Delta^p[M]} := \sum_{j\ge 0} m(\lambda_j)e^{-t\lambda_j} = \int_M \tr e^{-t\Delta^p[M]}(x,x). 
\label{trace_formula_compact}
\end{equation}
Le r\'esultat suivant est prouv\'e dans \cite{Donnelly_towers}

\begin{lem}
Si les $M_n,\, n\ge 1$ sont des vari\'et\'es riemanniennes, et s'il existe une mesure bor\'elienne $\nu$ sur $[0,+\infty[$ telle que l'on ait la limite 
$$
\frac{\otr e^{-\Delta^p[M_n]}}{\vol M_n} \xrightarrow[n\to\infty]{} \int_0^{+\infty} e^{-t\lambda} d\nu(\lambda)
$$
pour tout $t>0$, alors la suite des mesures spectrales $\nu_{M_n}^p$ converge faiblement vers $\nu$. 
\label{trace_multiplicities}
\end{lem}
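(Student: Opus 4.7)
Le plan est d'identifier la quantit\'e apparaissant dans l'hypoth\`ese \`a la transform\'ee de Laplace de la mesure spectrale normalis\'ee, puis de d\'eduire la convergence faible par une variante du th\'eor\`eme des moments. En effet, d'apr\`es la formule \eqref{trace_formula_compact} (en supposant, pour que l'\'enonc\'e ait un sens, que le num\'erateur se lise $\otr e^{-t\Delta^p[M_n]}$ avec le $t$), on a pour tout $t>0$
$$
\frac{\otr e^{-t\Delta^p[M_n]}}{\vol M_n} = \int_0^{+\infty} e^{-t\lambda}\, d\nu_{M_n}^p(\lambda),
$$
donc l'hypoth\`ese \'equivaut \`a la convergence ponctuelle sur $]0,+\infty[$ des transform\'ees de Laplace $\hat\nu_n$ des mesures $\nu_n := \nu_{M_n}^p$ vers celle $\hat\nu$ de $\nu$.

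L'obstacle principal est que les $\nu_n$ ne sont pas finies en g\'en\'eral (les valeurs propres tendent vers $+\infty$), ce qui emp\^eche d'appliquer directement un th\'eor\`eme de continuit\'e de type L\'evy. Pour le contourner, on pond\`ere par $e^{-\lambda}$ et on pousse en avant par l'hom\'eomorphisme $\Phi:[0,+\infty]\to[0,1]$, $\lambda\mapsto e^{-\lambda}$ (avec $\Phi(+\infty)=0$) ; on pose $\tilde\mu_n := \Phi_*(e^{-\lambda}\, d\nu_n)$ et $\tilde\mu := \Phi_*(e^{-\lambda}\, d\nu)$. Ces mesures sont bor\'eliennes finies sur le compact $[0,1]$, et par un calcul direct, pour tout entier $k\ge 0$,
$$
\int_0^1 x^k\, d\tilde\mu_n(x) = \hat\nu_n(k+1) \xrightarrow[n\to\infty]{} \hat\nu(k+1) = \int_0^1 x^k\, d\tilde\mu(x).
$$
En particulier, les masses totales $\tilde\mu_n([0,1]) = \hat\nu_n(1)$ sont uniform\'ement born\'ees. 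Par le th\'eor\`eme de Stone--Weierstrass (les polyn\^omes sont denses dans $C([0,1])$ pour la norme uniforme), la convergence de tous les moments entra\^ine alors $\int h\, d\tilde\mu_n\to\int h\, d\tilde\mu$ pour toute fonction $h\in C([0,1])$, soit la convergence faible $\tilde\mu_n\to\tilde\mu$ sur $[0,1]$.

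Il reste \`a transf\'erer cette convergence aux mesures de d\'epart. \'Etant donn\'ee $f\in C_c([0,+\infty[)$, la fonction $h(x) := f(-\log x)/x$ est continue sur $]0,1]$ et s'annule pour $x$ assez proche de $0$ (puisque $f$ est \`a support compact), donc s'\'etend continu\^ument \`a $[0,1]$ en posant $h(0)=0$. Le changement de variable donne alors
$$
\int_0^{+\infty} f\, d\nu_n = \int_{[0,1]} h\, d\tilde\mu_n \xrightarrow[n\to\infty]{} \int_{[0,1]} h\, d\tilde\mu = \int_0^{+\infty} f\, d\nu,
$$
ce qui est la convergence vague des mesures spectrales annonc\'ee. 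Le seul point non formel de la preuve est donc le recours \`a Stone--Weierstrass sur l'intervalle compact $[0,1]$, rendu possible par la pond\'eration par $e^{-\lambda}$ qui ram\`ene des mesures de masse infinie \`a des mesures finies.
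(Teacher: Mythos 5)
Votre démonstration est correcte : vous identifiez la trace du noyau de la chaleur à la transformée de Laplace de la mesure spectrale normalisée (en corrigeant au passage le $t$ manquant dans l'énoncé), puis vous ramenez, via la pondération par $e^{-\lambda}$ et le changement de variable $x=e^{-\lambda}$, la convergence des transformées de Laplace à une convergence de moments sur le compact $[0,1]$, conclue par Stone--Weierstrass et retransférée en convergence vague. Le texte ne donne pas de preuve et renvoie à Donnelly \cite{Donnelly_towers}, dont l'argument est essentiellement ce même théorème de continuité pour la transformée de Laplace ; votre rédaction en est donc une version autonome de la démonstration standard.
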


Si $X=G/K$ est un espace sym\'etrique, le spectre n'est plus discret mais on peut quand m\^eme d\'efinir des mesures spectrales $\nu_X^p$. Le terme de droite du lemme ci-dessus est alors donn\'e par la trace ponctuelle d'un noyau invariant sur $X$ (aussi obtenu comme une solution fondamentale \`a une \'equation de la chaleur sur $X$), i.e. 
$$
\int_0^{+\infty} e^{-t\lambda} d\nu_X^p(\lambda) = \tr e^{-t\Delta^p[X]}(x,x) =: \otr^{(2)} e^{-t\Delta^p[X]}
$$
pour n'importe quel $x\in X$. On a alors le r\'esultat suivant, qui sous la condition d'uniforme discr\'etion suit facilement de la formule des traces \eqref{trace_formula_compact}, d'estim\'ees bien connues sur la taille des orbites d'un r\'eseau et sur la d\'ecroissance des noyaux de la chaleur (cf. \cite[Corollary 8.27]{7samurai}), et du lemme \ref{trace_multiplicities}. La suppression de cette hypoth\`ese pour le rang un n\'ec\'essite plus de travail (cf. \cite[Section 9]{7samurai}). 

\begin{prop}
Si $X$ est un espace sym\'etrique, $M_n$ une suite de $X$-vari\'et\'es compactes qui soit convergente au sens de Benjamini--Schramm vers $X$. Si les $M_n$ ont une systole uniform\'ement minor\'ee, ou si $X$ est de rang un, alors on a 
$$
\otr e^{-t\Delta^p[M_n]} \xrightarrow[n\to\infty]{} \otr^{(2)} e^{-t\Delta^p[X]}
$$
pour tout $t>0$. En particulier, pour touts $b> a \ge 0$ on a 
$$
\frac 1{\vol M_n} \sum_{\lambda_j\in[a,b]} m(\lambda_j,M_n) \xrightarrow[n\to\infty]{} \nu_X^p([a,b]). 
$$
\label{multlim_eigenvalues}
\end{prop}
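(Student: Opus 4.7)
The plan is to reduce everything to the Selberg-type trace formula \eqref{trace_formula_compact} on the covering $\pi:X\to M_n=\Gamma_n\bs X$. Unfolding the heat kernel gives
\begin{equation*}
e^{-t\Delta^p[M_n]}(\ovl y,\ovl y) = \sum_{\gamma\in\Gamma_n} e^{-t\Delta^p[X]}(y,\gamma y)
\end{equation*}
for any lift $y\in X$ of $\ovl y\in M_n$. Taking the pointwise trace and integrating over $M_n$, the contribution of $\gamma=\Id$ is exactly $\vol(M_n)\cdot\otr^{(2)}e^{-t\Delta^p[X]}$. Hence the conclusion will follow as soon as we prove, for each fixed $t>0$,
\begin{equation*}
E_n(t) := \int_{M_n}\sum_{\gamma\in\Gamma_n\setminus\{\Id\}}\tr e^{-t\Delta^p[X]}(y,\gamma y)\, d\vol(\ovl y) = o(\vol M_n),
\end{equation*}
and then invoke Lemma \ref{trace_multiplicities} to convert convergence of the normalized heat traces into weak convergence of the spectral measures $\nu_{M_n}^p$ to $\nu_X^p$.

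To estimate $E_n(t)$, I would first use the standard Gaussian-type decay of the heat kernel on $X$ (cf. \cite[Chapter V]{Taylor_EDP}) to bound the integrand by $C_t\sum_{\gamma\neq\Id}e^{-d(y,\gamma y)^2/(Ct)}$. Then, for a large auxiliary $R$, split $M_n=(M_n)_{\le R}\cup(M_n)_{>R}$. On the thick part, every nontrivial translate satisfies $d(y,\gamma y)\ge 2R$, so by grouping the orbit in annular shells and using a standard orbit-counting bound the entire sum is bounded pointwise by $\eta(R,t)\to 0$ as $R\to\infty$, uniformly in $n$. This yields
\begin{equation*}
\int_{(M_n)_{>R}}(\cdots)\,d\vol\;\le\;\eta(R,t)\vol(M_n).
\end{equation*}
Under the uniform discreteness hypothesis, the orbit of $y$ is $\varepsilon$-separated with $\varepsilon$ independent of $n$ and $y$, so the full sum $\sum_{\gamma\neq\Id}|\tr e^{-t\Delta^p[X]}(y,\gamma y)|$ is bounded by a constant $C'(t)$ independent of $y$ and $n$, giving
\begin{equation*}
\int_{(M_n)_{\le R}}(\cdots)\,d\vol\;\le\;C'(t)\vol\bigl((M_n)_{\le R}\bigr),
\end{equation*}
and the Benjamini--Schramm convergence $\vol((M_n)_{\le R})/\vol M_n\to 0$ (together with letting $R\to\infty$ after $n\to\infty$) closes the estimate.

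The main obstacle is the second statement, where one drops uniform discreteness in rank one: the pointwise sum $\sum_{\gamma\neq\Id}|\tr e^{-t\Delta^p[X]}(y,\gamma y)|$ is no longer uniformly bounded, as $y$ may sit in arbitrarily short tubes or deep cusps of $M_n$. The way around this (following \cite[Section 9]{7samurai}) is to exploit the precise Margulis-type description of the thin part in rank one: each connected component of $(M_n)_{\le R}$ is either a Margulis tube around a short geodesic or a cusp neighborhood, and on each such component the local sum can be compared to the corresponding sum on the \emph{model} thin part (an elementary quotient of $X$ by a cyclic or parabolic subgroup). For these models one has explicit $L^1$-bounds for the non-identity heat contribution in terms of the volume of the thin component, uniform in the geometric parameters, so integration still gives a bound of the form $C''(t)\vol((M_n)_{\le R})$, and the same Benjamini--Schramm/$R\to\infty$ argument applies. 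Once $E_n(t)=o(\vol M_n)$ is established, Lemma \ref{trace_multiplicities} directly yields the second, quantitative assertion on the normalized spectral densities.
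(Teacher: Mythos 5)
Your treatment of the first case (uniformly bounded systole) is correct and is precisely the route the paper sketches: unfolding the heat kernel, isolating the identity contribution, playing the Gaussian decay of $e^{-t\Delta^p[X]}$ against a standard orbit-counting bound on the thick part, using the uniform separation of orbits to get a pointwise bound on the thin part, and concluding with Benjamini--Schramm convergence and Lemma \ref{trace_multiplicities}. Up to that point there is nothing to object to.

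The rank-one branch, however, contains a genuine gap: the claimed ``explicit $L^1$-bounds for the non-identity heat contribution in terms of the volume of the thin component, uniform in the geometric parameters'' do not exist. Take the model quotient of $\HH^2$ by a loxodromic $\gamma_0$ of translation length $\ell$ and let $k_t$ denote the scalar heat kernel of $\HH^2$; a computation in Fermi coordinates around the axis (equivalently, the hyperbolic orbital integrals in the Selberg trace formula) gives
$$
\int_{T}\sum_{n\neq 0}k_t\bigl(d(x,\gamma_0^n x)\bigr)\,dx \;\asymp_t\; \sum_{1\le n\le 1/\ell}\frac{\ell}{2\sinh(n\ell/2)} \;\asymp\; \log(1/\ell),
$$
which blows up as $\ell\to 0$, whereas the volume of the corresponding component of $(M)_{\le R}$ stays bounded, of order $\sinh(R/2)$ independently of $\ell$. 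So no estimate of the form $C''(t)\vol\bigl((M_n)_{\le R}\bigr)$ can hold, and Benjamini--Schramm convergence alone does not make this part of the error $o(\vol M_n)$ by your route: pinching a single geodesic of a BS-convergent sequence down to length $e^{-\vol M_n}$ does not affect BS-convergence but adds a term $\gtrsim \vol M_n$ to the geometric side of the trace formula (correspondingly, eigenvalues accumulate in any fixed window above $1/4$ with density $\asymp\log(1/\ell)$, \`a la Wolpert et Ji). This is exactly the difficulty behind the paper's remark that removing uniform discreteness ``n\'ecessite plus de travail'' and its deferral to \cite{7samurai}, Section 9; the argument there does not bound the tube contribution by the tube volume, and the extra $\log(1/\ell)$ terms must be handled differently (they are harmless for the Betti-number application, where one may let $t\to\infty$ with $n$, but they are the precise obstruction for a fixed-$t$ statement). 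As written, your proof establishes the proposition only under the uniform systole hypothesis.
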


Un autre corollaire est le r\'esultat suivant. 

\begin{cor}
Soient $X,M_n$ comme ci-dessus. Pour tout degr\'e $p\not=\dim X/2$ on a la limite :
$$
\lim_{n\to\infty} \frac{b_p(M_n)}{\vol M_n} = 0. 
$$
\end{cor}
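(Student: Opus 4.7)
Le plan est de majorer $b_p(M_n)/\vol M_n$ par la trace du noyau de la chaleur en degr\'e $p$, puis d'utiliser la proposition \ref{multlim_eigenvalues} avec un passage \`a la limite en $t\to+\infty$, en exploitant la nullit\'e des nombres de Betti $L^2$ de $X$ en dehors du degr\'e m\'edian.

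D'abord, par la th\'eorie de Hodge sur une vari\'et\'e riemannienne compacte, on a $b_p(M_n) = m(0,M_n)$, la multiplicit\'e de $0$ comme valeur propre de $\Delta^p[M_n]$. En cons\'equence, en minorant la somme spectrale par son premier terme, on obtient pour tout $t>0$
\[
b_p(M_n) \le \sum_{j\ge 0} m(\lambda_j,M_n) e^{-t\lambda_j} = \otr e^{-t\Delta^p[M_n]}.
\]
Divisant par $\vol M_n$ et appliquant la proposition \ref{multlim_eigenvalues}, on en d\'eduit
\[
\limsup_{n\to\infty} \frac{b_p(M_n)}{\vol M_n} \le \otr^{(2)} e^{-t\Delta^p[X]}
\]
pour tout $t>0$.

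Il reste \`a faire tendre $t$ vers $+\infty$. Par calcul spectral appliqu\'e au Laplacien sur $X$, la fonction $t\mapsto \otr^{(2)} e^{-t\Delta^p[X]}$ est d\'ecroissante et tend, quand $t\to+\infty$, vers la masse en $0$ de la mesure spectrale $\nu_X^p$, qui n'est autre que le $p$-i\`eme nombre de Betti $L^2$ de $X$ (c'est-\`a-dire la dimension de von Neumann du sous-espace des $p$-formes harmoniques de carr\'e int\'egrable sur $X$).

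Pour un espace sym\'etrique de type non-compact sans facteur euclidien, on dispose d'un r\'esultat classique de Borel (cf. aussi Olbrich) qui affirme que ces nombres de Betti $L^2$ sont nuls en tout degr\'e $p\ne \dim X/2$. On conclut alors
\[
\limsup_{n\to\infty} \frac{b_p(M_n)}{\vol M_n} \le 0,
\]
et comme le terme de gauche est positif, cela donne la limite voulue. Le seul ingr\'edient non totalement formel est le calcul du nombre de Betti $L^2$ du rev\^etement universel $X$ ; c'est probablement le point qu'il faudra citer avec soin et qui constitue, au-del\`a de la proposition \ref{multlim_eigenvalues} d\'ej\`a \'etablie, le c\oe ur du raisonnement.
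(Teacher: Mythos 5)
Votre preuve est correcte et suit essentiellement l'argument que le papier (et sa source \cite{7samurai}) a en vue : majorer $b_p(M_n)=m(0,M_n)$ par la trace du noyau de la chaleur, appliquer la proposition \ref{multlim_eigenvalues}, puis faire tendre $t$ vers $+\infty$ pour faire appara\^itre $\nu_X^p(\{0\})$, c'est-\`a-dire le nombre de Betti $L^2$ de $X$, nul hors du degr\'e m\'edian d'apr\`es Borel. Rien \`a redire.
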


Pour $p = \dim X/2$ la limite est calcul\'ee par le th\'eor\`eme de Chern--Gauss--Bonnet : elle ne d\'epend que de $X$, peut \^etre exprim\'ee comme $\chi(M)/\vol(M)$ pour n'importe quelle $X$-vari\'et\'e compacte $M$. Elle n'est non nulle que dans le cas o\`u $G$ contient un sous-groupe de Cartan compact (i.e. $G$ et $K$ ont m\^eme rang complexe), par exemple dans le cas o\`u $X=\HH^{2m}$ on a 
$$
\lim_{n\to+\infty} \frac{b_m(M_n)}{\vol M_n} = \frac 2 {V_{2m}}
$$
pour toute suite $M_n$ qui soit BS-convergente vers $\HH^{2m}$, o\`u $V_{2m}$ est le volume de la sph\`ere unit\'e dans $\RR^{2m+1}$. 

%%%%%%%%%%%%%%%%%%%%%%%%%%%%%%

\subsubsection{Valeurs propres exceptionnelles}

Nous nous restreindrons dans cette section aux vari\'et\'es hyperboliques r\'eelles (bien que les r\'esultats soient valides pour des espaces localement sym\'etriques plus g\'en\'eraux). Les mesures $\nu_X^p$ pour $X=\HH^d$ sont assez bien connues ; en particulier, pour $p<d/2$ elles sont support\'ees sur l'intervalle 
$$
\left[ \left( p - \frac{d-1}2 \right)^2, +\infty \right[. 
$$
On note $\lambda(d,p) = (p-(n-1)/2)^2$ la borne inf\'erieure de ce support, et si $M$ est une $d$-vari\'et\'e hyperbolique on dira qu'une valeur propre $\lambda$ du laplacien $\Delta^p[M]$ est exceptionnelle si $\lambda < \lambda(d,p)$. Une telle valeur propre est n\'ec\'essairement isol\'ee. Le r\'esultat suivant est alors une cons\'equence de \cite[Theorem 1.9]{7samurai}. 

\begin{theo}
Pour touts $d,p,\, p<(d-1)/2$ et touts $\eps,\delta>0$ il existe une fonction d\'ecroissante $\alpha: [0, \lambda(d,p)]\to [0,1]$, telle que $\alpha(\lambda(d,p)) = 0$, ayant la propri\'et\'e suivante. Si $M$ est une $d$-vari\'et\'e hyperbolique compacte satisfaisant
$$
\vol(M_{\le \eps\log\vol(M)}) \le \vol(M)^{1-\delta}, \quad \inj(M) \ge \delta
$$
et $\lambda$ une valeur propre exceptionnelle de degr\'e $p$ de $M$ alors on a 
$$
m(\lambda,M) \le (\vol M)^{1-\alpha(\lambda)}
$$
pour $\vol(M)$ assez grand (d\'ependant de $\eps,\delta,d$). 
\label{vp_ramanujan}
\end{theo}

%%%%%%%%%%%%%%%%%%%%%%%%%%%%%%%%%%%%%%%%%%%%%%%%%%%%%%%%%%%%

\subsection{Applications}
\label{applications}

\subsubsection{Rang sup\'erieur}

Soit $G$ un groupe de Lie r\'eel simple, de rang sup\'erieur et soit $X$ l'espace sym\'etrique associ\'e. Avec un peu de travail on peut d\'eduire du th\'eor\`eme de Nevo--St\"uck--Zimmer \ref{Nevo_Stuck_Zimmer} le r\'esultat suivant \cite[Theorem 1.5]{7samurai}. 

\begin{theo}
Soit $M_n$ une suite de $X$-vari\'et\'es (deux \`a deux non-isom\'etriques). Alors on a 
$$
\lim_{n\to+\infty} \frac{ \vol(M_n)_{\le R} }{ \vol M_n } = 0. 
$$
\label{BS_rangsup}
\end{theo}

On peut le reformuler de la mani\`ere suivante : il existe une fonction croissante $f$ sur $[0,+\infty[$ (d\'ependant de $X$) telle que $f(v)/v$ tende vers 0 quand $v\to+\infty$, et $\vol\left(M_{\le R}\right) \le f(\vol M)$ pour toute $X$-vari\'et\'e $M$. On peut de plus appliquer les r\'esultats de multiplicit\'es limites plus haut \`a la suite $M_n$ ou $\Gamma_n$. 

%%%%%%%%%%%%%%%%%%%%%%%%%%%%%%

\subsubsection{Rev\^etements de congruence}
\label{rev_cong}
Si $\Gamma$ est un r\'eseau d'un groupe de Lie semisimple $G$ et $\Gamma_n$ une suite de sous-groupes d'indice fini sans torsion de $\Gamma$, alors la convergence de Benjamini--Schramm des vari\'et\'es $M_n=\Gamma_n\bs X$ est \'equivalente \`a ce que les $\Gamma_n$ satisfassent un crit\`ere introduit par Michael Farber dans le cadre d'une g\'en\'eralisation du th\'eor\`eme d'approximation de L\"uck (cf. \cite{Farber_paper}). 

\begin{lem}
Soient $\Gamma,\Gamma_n,M_n$ comme ci dessus, on suppose $G$ simple. Alors la suite $M_n$ est convergente au sens de Benjamini--Schramm vers $X$ si et seulement si, pour tout $g\in\Gamma$ semisimple la condition suivante est satisfaite:
\begin{equation}
\frac{|\{ \gamma\in \Gamma/\Gamma_n:\: \gamma^{-1} g\gamma\in\Gamma_n\}|} {|\Gamma/\Gamma_n|} \xrightarrow[n\to+\infty]{} 0. 
\label{Farber}
\end{equation}
\end{lem}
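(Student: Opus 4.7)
L'idée est de traduire la convergence de Benjamini--Schramm, via la caractérisation \eqref{BS_trivial} par la petitesse relative des parties minces, en un dénombrement de conjugués d'éléments de $\Gamma$ tombant dans $\Gamma_n$.

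\emph{Mise en place.} Notons $\alpha_n(R) := \vol(M_n^{\le R})/\vol(M_n)$, de sorte que la convergence BS de $M_n$ vers $X$ équivaut à $\alpha_n(R)\to 0$ pour tout $R>0$. Fixons un domaine fondamental $F$ pour l'action de $\Gamma$ sur $X$; alors $\bigsqcup_{\gamma\in\Gamma_n\bs\Gamma}\gamma F$ en est un pour $\Gamma_n$. Pour $\tilde x\in F$, posons $S_R(\tilde x)=\{h\in\Gamma\setminus\{e\} : d(\tilde x,h\tilde x)\le 2R\}$. Un point $\Gamma_n\gamma\tilde x\in M_n$ est de rayon d'injectivité $\le R$ si et seulement si $\gamma^{-1}\Gamma_n\gamma$ rencontre $S_R(\tilde x)$, d'où l'identité clé
$$\alpha_n(R) = \frac{1}{[\Gamma:\Gamma_n]\vol(F)} \int_F \bigl| \bigl\{ \gamma\in\Gamma_n\bs\Gamma : S_R(\tilde x)\cap\gamma^{-1}\Gamma_n\gamma\ne\emptyset \bigr\} \bigr|\,d\tilde x.$$

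\emph{Sens direct.} Par borne union, le cardinal sous l'intégrale est au plus $\sum_{h\in S_R(\tilde x)} |\{\gamma\in\Gamma_n\bs\Gamma : \gamma h\gamma^{-1}\in\Gamma_n\}|$, chaque terme étant égal à $[\Gamma:\Gamma_n]\cdot f_h(\Gamma_n)$ après inversion des cosets, où $f_h$ désigne le quotient apparaissant dans \eqref{Farber}. Dans le cas cocompact, $F$ est précompact et $S_R := \bigcup_{\tilde x\in F}S_R(\tilde x)$ est un ensemble fini (indépendant de $n$) dont tous les éléments sont semisimples; l'hypothèse \eqref{Farber} donne $f_h(\Gamma_n)\to 0$ pour chaque $h\in S_R$, et la somme finie correspondante tend vers zéro, d'où $\alpha_n(R)\to 0$. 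Dans le cas non cocompact, on sépare la contribution des éléments semisimples (traitée comme ci-dessus) de celle des éléments unipotents; cette dernière se contrôle par une estimée géométrique directe sur le volume de la partie mince provenant des cusps de $M_n$.

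\emph{Sens réciproque.} Fixons $g_0\in\Gamma$ semisimple, et posons $A_R(g_0)=\{\tilde x\in X : d(\tilde x,g_0\tilde x)\le 2R\}$. Comme $g_0$ est semisimple, pour $R\ge\ell(g_0)/2$ assez grand $A_R(g_0)$ est un voisinage ouvert non vide du lieu de déplacement minimal de $g_0$, donc de volume strictement positif. Puisque $g\mapsto f_g(\Gamma_n)$ est invariante par conjugaison sous $\Gamma$, on peut, quitte à remplacer $g_0$ par un $\Gamma$-conjugué, supposer $\vol(A_R(g_0)\cap F)>0$. Pour $\tilde x\in A_R(g_0)\cap F$, on a $g_0\in S_R(\tilde x)$, d'où la minoration $|\{\gamma : S_R(\tilde x)\cap\gamma^{-1}\Gamma_n\gamma\ne\emptyset\}|\ge [\Gamma:\Gamma_n] f_{g_0}(\Gamma_n)$. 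En intégrant sur $A_R(g_0)\cap F$ et en comparant à l'intégrale complète, on obtient
$$f_{g_0}(\Gamma_n) \le \frac{\vol(F)}{\vol(A_R(g_0)\cap F)}\,\alpha_n(R) \xrightarrow[n\to\infty]{} 0.$$

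\emph{Principal obstacle.} La difficulté technique se situe essentiellement dans le cas non cocompact du sens direct: l'hypothèse \eqref{Farber} ne portant que sur les éléments semisimples, la contribution à la partie mince des éléments unipotents de $\Gamma$ (responsables des pointes) doit être estimée indépendamment, typiquement via la description explicite des horoboules et la décomposition épaisse--mince de Margulis; le reste de l'argument est élémentaire.
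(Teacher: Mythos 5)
Votre dépliage de l'intégrale sur un domaine fondamental est correct : il fournit une preuve complète du sens réciproque, ainsi que du sens direct dans le cas cocompact (où tous les éléments de $\Gamma$ sont semisimples et où $S_R=\bigcup_{\tilde x\in F}S_R(\tilde x)$ est fini). C'est essentiellement le calcul qui sous-tend la réduction donnée dans le texte, où ce lemme est présenté comme un cas particulier de la proposition \ref{geod_BSconv}, le quotient de \eqref{Farber} comptant la proportion des relevés dans $M_n$ de la géodésique fermée associée à $g$ qui gardent la même longueur.

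En revanche, le cas non cocompact du sens direct contient une lacune réelle. Vous affirmez que la contribution des éléments non semisimples se contrôle par une estimée géométrique directe sur le volume de la partie mince provenant des pointes ; mais cette estimée directe donne seulement que ce volume est de l'ordre de $C(R)$ fois le nombre de pointes de $M_n$, lequel vaut $\sum_{P}|\Gamma_n\backslash\Gamma/P|$ (somme sur les stabilisateurs de pointes de $\Gamma$) et peut a priori être comparable à $[\Gamma:\Gamma_n]$ : il est gouverné par la condition \eqref{Farber} appliquée aux éléments \emph{paraboliques}, qui ne font précisément pas partie de l'hypothèse. Il faut donc un argument supplémentaire pour exclure qu'une fraction non négligeable de la partie mince soit portée uniquement par des unipotents. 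C'est ce que fournit la proposition \ref{geod_BSconv}, dont la preuve passe par la compacité de l'espace des IRS et les théorèmes \ref{Borel_density} et \ref{limit_set} : un IRS limite non trivial est porté par des sous-groupes Zariski-denses, donc non élémentaires, donc contenant des éléments semisimples de déplacement borné, ce qui contredirait \eqref{Farber}. Sans cet ingrédient (ou sans étendre l'hypothèse aux éléments paraboliques), votre sens direct n'est établi que pour $\Gamma$ cocompact.
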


C'est en fait un cas particulier de la proposition \ref{geod_BSconv}, vu que le c\^ot\'e droit de \eqref{Farber} compte la proportion de la pr\'eimage de la g\'eod\'esique ferm\'ee associ\'ee \`a $\gamma$ qui soit de m\^eme longueur. 

%Dans le cas o\`u $\Gamma$ est cocompact ce lemme (en tout cas au moins l'implication directe) est quasiment tautologique puisque si $g$ correspond \`a une g\'eod\'esique ferm\'ee $c$ dans $M$, le quotient de gauche dans \eqref{Farber} est la proportion des relev\'es de $c$ \`a $M_n$ qui sont de m\^eme longueur que $c$. En g\'en\'eral (en rang un) le lemme se d\'eduit assez facilement de cette observation et de la g\'en\'eralisation aux IRS du th\'eor\`eme de Borel \ref{Borel_density}, ceci est d\'etaill\'e dans \cite[Proposition 2.2]{moi1}. 

La principale application de ce lemme est aux sous-groupes de congruence. Commen\c cons par rappeler ce que sont ces derniers : si $\Gamma$ est un r\'eseau arithm\'etique dans $G$ il existe un entier $n$ et une repr\'esentation fid\`ele $\rho : G\to \GL_n(\RR)$ telle que $\rho(\Gamma)$ soit contenu (forc\'ement avec indice fini) dans $G\cap\GL_n(\ZZ)$. Si $\Gamma'$ est un sous-groupe d'indice fini de $\Gamma$ on dit alors qu'il est de congruence s'il existe un entier $m\ge 1$ tel que $\Gamma'$ contienne le noyau du morphisme $\Gamma\to\GL(\ZZ/m)$ (compos\'ee de $\rho$ et de la r\'eduction modulo $m$ de $\GL_n(\ZZ)$)\footnote{Cette notion d\'epend a priori de la repr\'esentation $\rho$, mais uniquement \`a indice born\'e pr\`es. Pour une d\'efinition plus sophistiqu\'ee on ref\`ere \`a \cite[Chapter 6]{Lubotzky_Segal}.}. 

Dans ce cadre on a le r\'esultat suivant (cf. \cite[Theorem 1.12]{7samurai}). 

\begin{theo}
Si $\Gamma$ est un r\'eseau arithm\'etique cocompact de $G$ et $\Gamma_n$ est une suite de sous-groupes de congruence de $\Gamma$ (deux \`a deux distincts) alors il existe des constantes $c,\alpha>0$ telles que l'on ait pour touts $R>0$ et $n$ 
$$
\vol \left( (M_n)_{\le R}\right) \le e^{cR}(\vol M_n)^{1-\alpha}. 
$$
\label{BSconv_congruence}
\end{theo}

Dans le cas o\`u $\Gamma$ n'est que de covolume fini on n'obtient en g\'en\'eral que la convergence de Benjamini--Schramm des $M_n$ vers $X$ ; cependant il est probable que les m\^emes estim\'ees restent valides (cf. \cite[Theorem B]{moi2} pour le cas des vari\'et\'es hyperboliques en dimension 3). 

L'estim\'ee pr\'ecise sur le volume de la partie mince dans le th\'eor\`eme \ref{BSconv_congruence} permet aussi d'\^etre plus pr\'ecis pour les estim\'ees de multiplicit\'es de repr\'esentations non-temp\'er\'e via le th\'eor\`eme \ref{vp_ramanujan} . En particulier on a le r\'esultat suivant (cf. aussi \cite[Corollary 1.10]{7samurai}). 

\begin{theo}
Soient $d\ge 2$ et $M$ vari\'et\'e arithm\'etique hyperbolique de dimension $d$ ; pour tout $p\not= d/2$ (si $d$ est pair) ou $p\not=(d\pm 1)/2$ (si $d$ est impair) il existe un $\alpha>0$ tel que si $M_n$ une suite de rev\^etements de congruence de $M$ on ait 
$$
b_p(M_n) \ll (\vol M_n)^{1-\alpha}. 
$$
\end{theo}

%%%%%%%%%%%%%%%%%%%%%%%%%%%%%%%%%%%%%%%%%%%%%%%%%%%%%%%%%%%%%%%%%%%%%%%%%%%%%%%%

\section{Vari\'et\'es hyperboliques r\'eelles}
\label{varietes}
\subsection{Sous-groupes al\'eatoires invariants de $\SO(n,1)$}
\label{IRS_hyperboliques}

\subsubsection{Sous-groupes normaux}

Les deux th\'eor\`emes \ref{Nevo_Stuck_Zimmer} et \ref{BS_rangsup} ci-dessus ne sont pas vrais en rang un. Plus pr\'ecis\'ement, on dispose de contre-exemples au th\'eor\`eme de Nevo--St\"uck--Zimmer pour tous les groupes de rang un, et au th\'eor\`eme \ref{BS_rangsup} au moins pour $G=\SO(n,1)$ ou $\SU(n,1)$. Les premiers proviennent de la construction d'IRS \`a partir de sous-groupes normaux de r\'eseaux de $G$ (cf. \ref{IRS_induit}) et du r\'esultat suivant, d\^u \`a Gromov et dont une preuve est donn\'ee par exemple dans \cite[Theorem 14.9]{Witte-Morris_livre}. 

\begin{theo}
Soit $G$ un groupe de Lie semisimple de rang r\'eel 1 et $\Gamma$ un r\'eseau de $G$. Il existe un sous-groupe normal $\Lambda\le \Gamma$ qui est infini et d'indice infini dans $\Gamma$. 
\end{theo}

En revanche la construction de $\Lambda$ dans le th\'eor\`eme ci-dessus (comme le sous-groupe normalement engendr\'e par une g\'eod\'esique ferm\'ee bien choisie) ne donne aucune indication sur la finitude r\'esiduelle du groupe quotient $\Gamma/\Lambda$ : on ne peut donc pas en d\'eduire un contre-exemple au second r\'esultat. 

Il est par contre connu que pour tout r\'eseau arithm\'etique $\Gamma$ de $\SO(n,1)$, $n\not= 7$ (\'egalement pour une grande partie des r\'eseaux arithm\'etiques en dimension 7 et pour tous les exemples non-arithm\'etiques connus), il existe un sous-groupe $\Gamma'\le \Gamma$ tel que $\Gamma'$ se surjecte sur $\ZZ$ (cf. \cite{Millson_Li} pour le cas arithm\'etique en dimensions $n\not=3,7$, \cite{Agol_VH} pour le cas g\'en\'eral en dimension 3 et \cite{Lubotzky_free_quotients} pour les vari\'et\'es hybrides). Le sous-groupe al\'eatoire invariant de $\SO(n,1)$ induit \`a partir du noyau de cette surjection donne alors un contre-exemple au th\'eor\`eme  \ref{BS_rangsup} pour $G=\SO(n,1)$. Il existe aussi des r\'eseaux dans $\SU(n,1)$ ayant un morphisme non-trivial vers $\ZZ$ (cf. \cite{Kazhdan_weilrepr}), et ceux-ci donnent donc aussi des contre-exemples pour $G = \SU(n,1)$.  

Noter que d'apr\`es le th\'eor\`eme \ref{BSconv_congruence}, si un r\'eseau arithm\'etique $\Gamma$ poss\`ede un IRS support\'e sur des sous-groupes infinis, d'indice infini qui soit limite de sous-groupes d'indice fini (en particulier s'il existe un sous-groupe normal non-trivial $\Lambda\le\Gamma$ tel que $\Gamma/\Lambda$ soit infini et r\'esiduellement fini) alors le noyau de congruence de $\Gamma$ est infini (en particulier, le th\'eor\`eme \ref{Nevo_Stuck_Zimmer} est compatible avec la propri\'et\'e des sous-groupes de congruence pour les r\'eseaux d'ordre sup\'erieur). Dans le cas hyperbolique complexe, il existe en toute dimension des vari\'et\'es arithm\'etiques pour lesquelles on ne conna\^it pas l'existence d'un sous-groupe d'indice fini ayant une ab\'elianisation infinie (et dont on sait en fait, d'apr\`es un th\'eor\`eme de Jon Rogawski \cite{Rogawski_U3}, que tous leurs sous-groupes de congruence ont un premier nombre de Betti nul). Pour ces derniers il serait donc particuli\`erement int\'eressant d'exhiber un IRS approximable (ou un sous-groupe $\Lambda$ comme ci-dessus). 

%%%%%%%%%%%%%%%%%%%%%%%%%%%%%%

\subsubsection{Classifications topologiques en petites dimensions}

On peut obtenir une classifiction partielle des vari\'et\'es apparaissant dans les sous-groupes al\'eatoires invariants des isom\'etries du plan ou de l'espace hyperbolique. Pour ce qui est des surfaces on a une classification topologique compl\`ete, pour les vari\'et\'es de dimension trois on doit se limiter aux types topologiques finis. Le r\'esultat pour ces derni\`eres est d\^u \`a Mikl\'os Ab\'ert et Ian Biringer. Il est d\'emontr\'e dans \cite{Abert_Biringer} et s'\'enonce comme suit.

\begin{theo}[Ab\'ert--Biringer]
Soit $\mu$ un IRS ergodique de $\PSL_2(\CC)$ tel que $\mu$-presque tout sous-groupe soit finiment engendr\'e. On suppose de plus que $\mu$ n'est pas support\'e sur le sous-groupe trivial ou sur des r\'eseaux. Alors $\mu$ est support\'e sur des sous-groupes de surfaces doublement d\'eg\'en\'er\'es ; en particulier il existe une surface hyperbolique $S$ telle que pour $\mu$-presque tout $\Gamma$ la vari\'et\'e $\Gamma\bs\HH^3$ soit diff\'eomorphe \`a $S\times\RR$. 
\label{fgIRS_dim3}
\end{theo}

Le moyen le plus simple de construire de tels IRS est de consid\'erer une vari\'et\'e hyperbolique fibr\'ee $M = S\times[0,1]/(x,0)\sim(\phi x,1)$ (o\`u $\phi$ est un diff\'eomorphisme pseudo-Anosov de la surface hyperbolique $S$) qui poss\`ede alors un rev\^etement cyclique infini $S\times\RR$ dont la monodromie est un groupe de surface doublement d\'eg\'en\'er\'e. Il existe aussi des exemples nettement plus compliqu\'es donnant des groupes qui ne sont pas contenus dans des r\'eseaux de $\PSL_2(\CC)$ (qui apparaissent d'ailleurs comme limites des exemples pr\'ec\'edents). Ces derniers sont construits dans \cite[Theorem 12.8]{7samurai}. 

Il existe, en toute dimension, des sous-groupes al\'eatoires invariants support\'es sur des groupes de rang infini. Un exemple simple est donn\'e par les rev\^etements cycliques infinis de vari\'et\'es de volume fini. Des exemples plus complexes (contenant en particulier une quantit\'e non-d\'enombrable de types topologiques) seront pr\'esent\'es ci-dessous (cf. \ref{IRS_hybrides}).

\medskip

On peut compl\`etement caract\'eriser les types topologiques des surfaces apparaissant dans des IRS de $\PSL_2(\RR)$ par le r\'esultat suivant, dont une preuve est donn\'ee dans l'appendice \ref{IRS_dim2} (\'ecrit avec I. Biringer). 

\begin{theo}
Soit $\mu$ un IRS ergodique, sans atomes de $\PSL_2(\RR)$. Alors il existe une surface $S_\mu$ telle que pour $\mu$-presque tout $\Lambda$ la surface $\Lambda\bs\HH^2$ soit hom\'eomorphe \`a $S_\mu$. 

De plus si $S_\mu$ n'est pas de type topologique fini alors elle est hom\'eomorphe \`a l'une des dix surfaces suivantes : le monstre du Loch Ness (plan auquel on a attach\'e une infinit\'e d'anses), l'\'echelle de Jacob (double du pr\'ec\'edent moins un disque), l'arbre de Cantor (sph\`ere priv\'ee d'un sous-ensemble de Cantor) ou l'arbre de Cantor fleuri (le dernier avec une infinit\'e d'anses attach\'ees, chaque point du Cantor \'etant limite d'anses)---cf. la figure \ref{surfaces} pour les repr\'esentations standard de ces derni\`eres---ou l'une de celles-ci, le cylindre ou le plan \`a laquelle on a \^ot\'e un sous-ensemble localement fini de points qui intersecte tous les voisinages de bouts. 
\label{IRS_dim2_theo}
\end{theo}

\begin{figure}
\centering 

\begin{subfigure}[b]{0.4\textwidth}
\includegraphics[width = \textwidth]{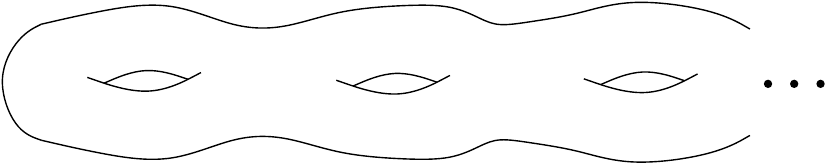}
\caption{Monstre du Loch Ness}
\end{subfigure}
\hspace{1cm}
\begin{subfigure}[b]{0.4\textwidth}
\includegraphics[width = \textwidth]{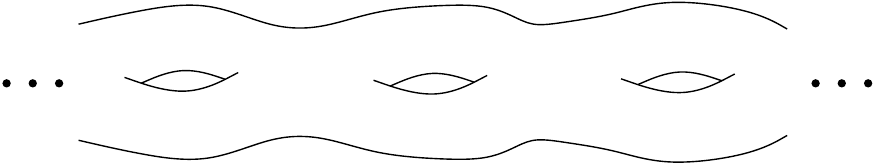}
\caption{Echelle de Jacob}
\end{subfigure}

\vspace{1cm}

\begin{subfigure}[b]{0.4\textwidth}
\includegraphics[width = \textwidth]{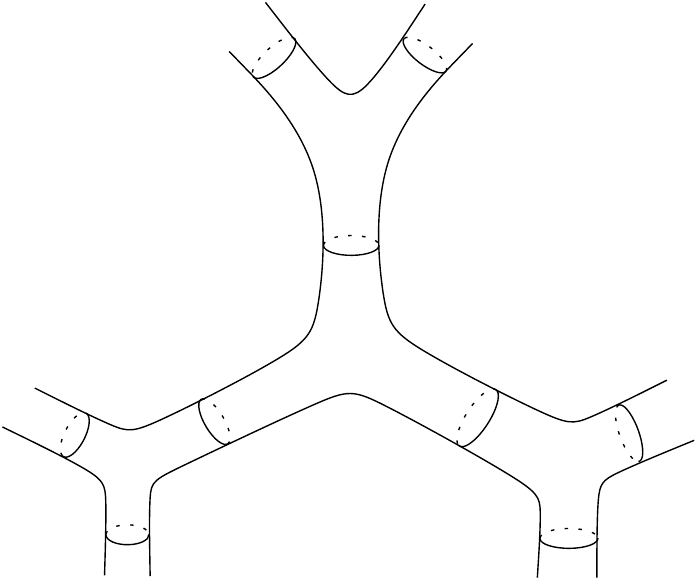}
\caption{Arbre de Cantor}
\end{subfigure}
\hspace{1cm}
\begin{subfigure}[b]{0.4\textwidth}
\includegraphics[width = \textwidth]{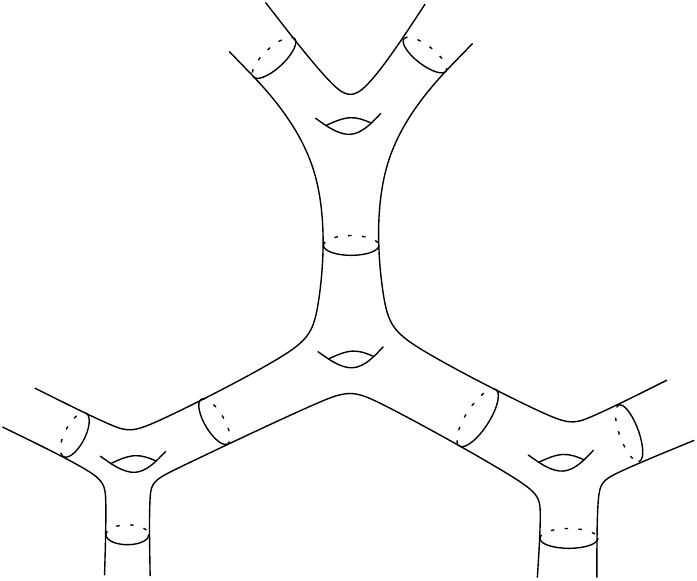}
\caption{Arbre de Cantor fleuri}
\end{subfigure}

\caption{}\label{surfaces}
\end{figure}

Il n'est pas dur de construire pour chacun de ces types un IRS de $\PSL_2(\RR)$ qui soit support\'e sur des surfaces de ce type. On peut le faire par des rev\^etements de surfaces de volume fini\footnote{Par exemple le monstre du Loch Ness, l'\'echelle de Jacob et les arbres de Cantor correspondent respectivement \`a des rev\^etements ab\'eliens libres de rang 1, $\ge 2$ et libres non-ab\'eliens d'une surface compacte que l'on peut remplacer par une surface \`a cusps pour obtenir les types restants.}, ou par des d\'ecompositions en pantalons (\'eventuellement d\'eg\'en\'er\'es) sur des graphes infinis (cf. \cite[12.1]{7samurai} pour cette derni\`ere construction). On remarque que la g\'eom\'etrie des rev\^etements infinis de surfaces a \'et\'e \'etudi\'e par Rostislav Grigorchuk dans \cite{Grigorchuk_surfaces} (dans ce cas le th\'eor\`eme \ref{IRS_dim2_theo} est une cons\'equence imm\'ediate de la classification des surfaces (rappel\'ee dans l'appendice \ref{IRS_dim2}) et de l'observation de Heinz Hopf que les groupes infinis ont un, deux ou un ensemble de Cantor de bouts). 

Enfin, notons qu'au vu des liens entre les sous-groupes al\'eatoires invariants et les mesures harmoniques des feuilletages (cf. \cite{Abert_Biringer}) ce r\'esultat peut \^etre vu comme un analogue dans le premier cadre au th\'eor\`eme d'Etienne Ghys sur les feuilles g\'en\'eriques \cite{Ghys_feuilles}. 

%%%%%%%%%%%%%%%%%%%%%%%%%%%%%%

\subsubsection{Dimensions sup\'erieures}
\label{IRS_hybrides}

Comme pour la construction de vari\'et\'es non-arithm\'etiques, en dimensions plus grandes on ne dispose d'aucune approche syst\'ematique. On peut cependant construire des exemples d'IRS ergodiques `exotiques' (qui ne sont pas induits par un IRS d'un r\'eseau) en toute dimension comme suit ; les d\'etails sont donn\'es dans \cite[Section 13]{7samurai}. 

Soit $d\ge 3$ ; on choisit deux $d$-vari\'et\'es hyperboliques compactes $N_0,N_1$ ayant chacune un bord totalement g\'eod\'esique, compos\'e de deux copies d'une vari\'et\'e hyperbolique $\Sigma$ de dimension $d-1$. Pour une suite $\alpha\in\{0,1\}^{\mathbb{Z}}$ on note $N_{\alpha}$ la vari\'et\'e hyperbolique compl\`ete de volume infini obtenue en recollant des copies de $N_0,N_1$ de la mani\`ere indiqu\'ee par $\alpha$. Plus pr\'ecis\'ement, on suppose que l'on a choisi une identification des bords de $N_0$ et $N_1$, et on note $i_a^\pm$ les inclusions $\sigma \to N_a$ ; on a alors :
$$
N_{\alpha} = \left( \bigsqcup_{i\in\ZZ} N_{\alpha_i} \times \{i\} \right)/ (i_{\alpha_i}^+ x,i) \sim (i_{\alpha_{i+1}}^- x,i+1)\quad (i\in\ZZ, \,x\in\Sigma). 
$$
Si on prend maintenant n'importe quelle mesure de probabilit\'e $\nu$ sur $\{0,1\}^{\mathbb{Z}}$ on obtient un sous-groupe al\'eatoire $\mu_\nu$ de $\SO(d,1)$, obtenu en choisissant selon la mesure $G$-invariante un rep\`ere al\'eatoire dans $N_{\alpha_0}\times\{0\}\subset N_\alpha$ o\`u $\alpha$ est choisie al\'eatoirement suivant la loi $\nu'$ sur $\{0,1\}^\ZZ$ d\'efinie par :
$$
\nu'(A)=\frac{\int_A\vol(N_{\alpha_0})d\nu(\alpha)}{\int_{\{0,1\}^{\mathbb{Z}}}\vol(N_{\alpha_0})d\nu(\alpha)}.
$$
Si la mesure originale $\nu$ est invariante par le d\'ecalage alors $\mu_\nu$ est invariant par conjugaison, et si de plus $\nu$ est ergodique alors $\mu_\nu$ l'est aussi. Dans le cas o\`u $\nu$ est la mesure invariante support\'ee sur l'orbite par d\'ecalage d'une suite p\'eriodique $\alpha$ il est clair que $\mu_\nu$ est l'IRS induit correspondant au rev\^etement cyclique infini de $N_\alpha$ sur la vari\'et\'e compacte obtenue en quotientant par la puissance du d\'ecalage correspondant \`a la p\'eriode. Dans les cas restants on obtient bien de nouveaux IRS, comme montr\'e par le r\'esultat suivant. 

\begin{theo}
Soit $n\ge 3$, il existe alors un choix de $N_0$ et $N_1$ telles que pour une suite $\alpha\in\{0,1\}^\ZZ$ non-p\'eriodique la vari\'et\'e $N_{\alpha}$ ne soit un rev\^etement d'aucune vari\'et\'e de volume fini. En particulier, si la mesure invariante ergodique $\nu$ n'est pas support\'ee sur une orbite p\'eriodique alors l'IRS ergodique $\mu_\nu$ ne peut pas \^etre induit depuis un r\'eseau de $\SO(d,1)$. 
\end{theo}

Le choix de $N_0,N_1$ utilis\'e dans la preuve de ce th\'eor\`eme est inspir\'e par la construction de vari\'et\'es non-arithm\'etiques par Gromov et Piatetski-Shapiro \cite{GPS}. 

%Il est facile de voir ces IRS comme provenant d'un espace feuillet\'e par vari\'et\'es riemanniennes. Soit $\wdh Y$ l'espace feuillet\'e suivant : on part de la base $\Omega=\{0,1\}^\ZZ$, et au-dessus de $\alpha\in\Omega$ on place la fibre $N_\alpha$, et on `recolle' ces fibres de la fa\c con naturelle. On a alors une mesure $\wdh\mu$ sur $\wdh Y$, obtenue en int\'egrant le volume riemannien dans les fibres par rapport \`a la mesure $\nu$ sur la base. Pour tout $\alpha\in\Omega$ on a une unique isom\'etrie $t_\sigma:N_\alpha\to N_{\sigma\alpha}$ (o\`u $\sigma$ d\'esigne le d\'ecalage sur $\Omega$) qui respecte les plongements de $\sigma$ dans $N_\alpha,N_{\sigma\alpha}$, et la mesure $\wdh\mu$ est invariante par l'hom\'eomorphisme $\wdh\sigma$ de $\wdh Y$ d\'efini par 
%$$ \wdh\sigma(\alpha,x) = (\sigma\alpha,t_\sigma x). $$
%L'espace quotient $(Y,\mu) = (\wdh Y,\wdh\mu)/\langle\wdh\sigma\rangle$ est alors un espace riemannien feuillet\'e, et la mesure unimodulaire associ\'ee est la m\^eme que celle de l'IRS d\'ecrit ci-dessus. 

%%%%%%%%%%%%%%%%%%%%%%%%%%%%%%%%%%%%%%%%%%%%%%%%%%%%%%%%%%%%

\subsection{Surfaces al\'eatoires}

\label{random_surfaces}

\subsubsection{Weil--Petersson}

L'espace de modules ${\mathcal M}_g$ des structures hyperboliques \`a isom\'etrie pr\`es sur une surface de genre $g$ est muni d'une mesure Bor\'elienne $\nu_{\rm wp}$, dite de Weil--Petersson. La masse totale est finie, et on notera $\mu_{\rm wp}$ la mesure de probabilit\'e associ\'ee. Le r\'esultat suivant est d\'emontr\'e par Maryam Mirzakhani dans \cite[4.4]{Mirzakhani}. 

\begin{theo}[Mirzakhani]
Il existe une constante $C>1$ telle que pour tout $g\ge 2$ on ait:
$$
\mu_{\rm wp} \left( X\in{\mathcal M}_g: \: \vol(X_{\le \log(g)/6}) \le C^{-1}g^{11/12}\log(g) \right) \ge 1 - Cg^{-1/4}.
$$
\label{WP_conv}
\end{theo}

%%%%%%%%%%%%%%%%%%%%%%%%%%%%%%

\subsubsection{Surfaces de B\'elyi al\'eatoires}
\label{belyi_maintext}

Un autre exemple int\'eressant de mod\`ele al\'eatoire est d\'efini et \'etudi\'e par Robert Brooks et Eran Makover dans \cite{Brooks_Makover}. Une surface de B\'elyi est la compactification conforme d'une surface arithm\'etique non-compacte : on part d'un rev\^etement $S$ de la surface modulaire $\PSL_2(\ZZ)\bs\HH^2$, que l'on munit de sa structure conforme. La surface non-compacte $S$ est diff\'eomorphe \`a une surface compacte $S_C$ \`a laquelle on \^ote un nombre fini de points ; de plus la structure conforme sur $S$ induit une structure conforme sur $S_C$, et cette derni\`ere est alors appel\'ee une surface de B\'elyi. 

Il est facile de construire al\'eatoirement des surfaces arithm\'etiques non-compactes : si $\mathcal G$ est un graphe trivalent on otient une telle surface en recollant des triangles hyperboliques id\'eaux selon le sch\'ema prescrit par $\mathcal G$, en identifiant les c\^ot\'es sans param\`etre de cisaillement (i.e. les projections des centres de gravit\'es de deux triangles adjacents \`a un c\^ot\'e sur ce dernier co\"incident)\footnote{Pour lever certaines ambigu\"it\'es il faut aussi munir $\mathcal G$ d'un ordre cyclique des ar\^etes adjacentes \`a chacun de ses sommets, on ref\`ere \`a \cite{Brooks_Makover} pour plus de d\'etails.}. N'importe quel mod\`ele de graphe al\'eatoire donne alors un mod\`ele al\'eatoire pour les surfaces arithm\'etiques, et partant pour les surfaces de B\'elyi obtenues en compactifiant ces derni\`eres. 

Le mod\`ele que l'on retiendra est le m\^eme que dans \cite{Brooks_Makover}. Une surface de B\'elyi al\'eatoire de complexit\'e $n$ est obtenue en tirant au hasard un graphe trivalent \`a $2n$ sommets comme suit : on choisit, selon la loi uniforme, une bijection $\{1,\ldots,6n\} \to \{1,\ldots,2n\}\times\{1,2,3\}$, $i\mapsto(a_i,b_i)$ et on met une ar\^ete entre les sommets $a_{2i-1},a_{2i}$ pour $i=1,\ldots,3n$. Un th\'eor\`eme de B\'ela Bollob\'as (cf. \cite[Theorem 5.3]{Brooks_Makover}) montre que pour $r,m$ donn\'es, la nombre de circuits de longueur $r$ dans $\mathcal G$ est inf\'erieur \`a $m$ avec probabilit\'e tendant vers 1 quand $m$ tend vers l'infini (ind\'ependamment de $n$ assez grand). Par un r\'esultat de comparaison d\^u \`a Brooks on peut en d\'eduire le r\'esultat suivant (la preuve compl\`ete est donn\'ee dans l'annexe \ref{belyi}). 

\begin{theo}
Pour touts $R,\eps>0$, la probabilit\'e que pour une surface de B\'elyi al\'eatoire $S_C$ de complexit\'e $n$ le volume de la partie $R$-mince $(S_C)_{\le R}$ soit sup\'erieur \`a $\eps\vol(S)$ tend vers z\'ero quand $n$ tend vers l'infini. 
\label{Belyi_conv}
\end{theo}

Noter que l'esp\'erance de la systole de $S_C$ est born\'ee (cf. \cite{Petri_systole} qui calcule la limite exacte) et que celle du rayon maximal est d'ordre $\log\vol(S_C)$ (cf. \cite[Theorem 2.4]{Brooks_Makover}). 

%%%%%%%%%%%%%%%%%%%%%%%%%%%%%%

\subsubsection{Application aux petites valeurs propres}

Nous commen\c cons par les surfaces al\'eatoires d\'etermin\'ees par la loi de Weil--Petresson. Le th\'eor\`eme de Mirzakhani \ref{WP_conv} conjointement aux r\'esultats de \cite{7samurai} (rappel\'es en \ref{multlim} ci-dessus) montrent qu'une surface al\'eatoire de Weil--Petersson g\'en\'erique a peu de petites (i.e. inf\'erieures \`a $1/4$) valeurs propres du Laplacien sur les fonctions. Rappelons que $m(S,\lambda)$ d\'esigne la multiplicit\'e de $\lambda\in[0,+\infty[$ comme valeur propre du Laplacien sur les fonctions de carr\'e int\'egrable sur $S$. On a alors les deux r\'esultats suivants. 
\begin{itemize}
\item[(i)] Il existe une fonction $f$ telle que $f(g)/g\xrightarrow[g\to+\infty]{} 0$ et pour une surface al\'eatoire de Weil--Petersson de genre $g$ on ait avec probabilit\'e tendant vers 1 quand $g\to+\infty$
$$
\sum_{\lambda<1/4} m(\lambda,S) \le f(g). 
$$
\item[(ii)] Il existe un $C>0$ et une fonction d\'ecroissante $\alpha:[0,1/4]\to [0,1[$ (avec $\alpha(1/4)=0$) tels que pour tout $\lambda\in [0,1/4[$ et pour une surface al\'eatoire de Weil--Petersson de genre $g$ on ait avec probabilit\'e tendant vers 1 quand $g\to+\infty$
$$
m(\lambda,S) \le Cg^{1-\alpha(\lambda)}. 
$$
\end{itemize}

Pour les surfaces de B\'elyi al\'eatoires on obtient par le th\'eor\`eme \ref{Belyi_conv} l'analogue de (i), en revanche l'\'etude des parties minces n'est pas assez fine pour obtenir un r\'esultat \'equivalent \`a (ii). 

Jean-Pierre Otal et Eulalio Rosas ont montr\'e dans \cite{Otal_Rosas} que pour une surface de genre $g$ il y a au plus $2g-1$ petites valeurs propres, et que cette borne est optimale si on consid\`ere toutes les surfaces de genre $g$. En revanche, le point (i) ci-dessus montre que la probabilit\'e (pour la mesure de Weil--Petersson normalis\'ee, ou pour le mod\`ele de Brooks--Makover) qu'une surfaces de genre $g$ ait autant de valeurs propres tend vers 0 quand $g$ augmente. Le point (ii) est optimal (\`a l'\'echelle logarithmique) pour l'ensemble de toutes les surfaces : Bruno Colbois et Yves Colin de Verdi\`ere ont construit dans \cite{CCdV} des surfaces hyperboliques de genre $g$ dont la premi\`ere valeur propre est $<1/4$ et de multiplicit\'e $\gg \sqrt g$. 

%%%%%%%%%%%%%%%%%%%%%%%%%%%%%%%%%%%%%%%%%%%%%%%%%%%%%%%%%%%%

\subsection{Vari\'et\'es hyperboliques de grand volume en dimensions $\ge 3$}
\label{higher_dim}

\subsubsection{Vari\'et\'es arithm\'etiques}

Un r\'eseau arithm\'etique dans $\SO(d,1)$ est d\'efini, \`a commensurabilit\'e pr\`es, par un corps de nombres totalement r\'eel $F$ et un groupe alg\'ebrique $\G/F$ tel que pour tous les plongements $\sigma:F\to\RR$ sauf un on ait $\G^\sigma(\RR) = \SO(d+1)$, et pour le plongement $\sigma_0$ restant $\G^{\sigma_0}(\RR) = \SO(d,1)$. Le groupe des points entiers $\G(\so_F)$ (d\'efini \`a commensurabilit\'e pr\`es) est alors un r\'eseau de $\SO(d,1)\times\SO(d+1)^{r-1}$ (o\`u $r = [F:\QQ]$) et sa projection $\Gamma$ dans $G = \SO(d,1)$ est un r\'eseau dans ce dernier groupe. On dira que $F$ est le corps de d\'efinition de $\Gamma$ (ou de n'importe quel sous-groupe de $G$ qui lui est commensurable). La conjecture suivante appara\^it dans \cite{moi1}, et il semble que Peter Sarnak en ait \'enonc\'e une semblable. 

\begin{conj}
Soit $\Gamma_n$ une suite de r\'eseaux sans torsion maximaux (deux \`a deux non-conjugu\'es) dans $\SO(d,1)$. Alors la suite des orbifolds $\Gamma_n\bs\HH^d$ est convergente au sens de Benjamini--Schramm vers $\HH^d$. 
\label{arit_conv}
\end{conj}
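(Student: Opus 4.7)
The plan is to apply Proposition~\ref{geod_BSconv}, which reduces Benjamini--Schramm convergence towards $\HH^d$ to showing that the number of closed geodesics of length at most $R$ is $o(\vol(\Gamma_n\bs\HH^d))$ for every fixed $R > 0$. Maximal arithmetic lattices in $\SO(d,1)$ are parametrised, up to conjugacy, by their commensurability class (a totally real field $F_n$, an $F_n$-form $\G$ of $\SO(d,1)$, and a finite amount of auxiliary data at finite places), together with finitely many classes of maximal lattices inside each commensurability class. Since the $\Gamma_n$ are pairwise non-conjugate, after extracting a subsequence one may assume that either $[F_n:\QQ]\to\infty$, or the degree is bounded but the discriminant $|d_{F_n}|$ (or the complexity of $\G$ at finite places) tends to infinity.

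The short geodesic count can then be controlled through the trace of a hyperbolic element. A hyperbolic $\gamma\in\Gamma_n$ of translation length $\ell\le R$ has an eigenvalue $e^{\ell/2}$ which is an algebraic integer bounded by $e^{R/2}$ at the distinguished embedding $\sigma_0\colon F_n\to\RR$; at every other real embedding $\sigma$ the Galois conjugate $\gamma^\sigma$ lies in the compact group $\G^\sigma(\RR)\cong\SO(d+1)$, so its eigenvalues lie on the unit circle. A classical estimate for algebraic integers in a quadratic extension of $F_n$ satisfying this archimedean pattern bounds the number of admissible traces by $C(R,d)^{[F_n:\QQ]}$, each trace producing a bounded number (depending only on $d$) of conjugacy classes in $\Gamma_n$.

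For the denominator, Prasad's volume formula gives $\vol(\Gamma_n\bs\HH^d)\ge c_d\cdot|d_{F_n}|^{\alpha_d}$ times factors coming from local ramification and special values of $L$-functions of $F_n$, with $\alpha_d>0$ depending only on $d$. Combined with the Odlyzko--Serre lower bound $|d_{F_n}|^{1/[F_n:\QQ]}\ge c>1$ for totally real fields, this yields an exponential lower bound $\vol(\Gamma_n\bs\HH^d)\ge e^{c\alpha_d[F_n:\QQ]}$ when the degree grows, and a comparable estimate in $\log|d_{F_n}|$ when the degree is bounded. The ratio of the geodesic count to the volume would then be at most $\bigl(C(R,d)\cdot e^{-c\alpha_d}\bigr)^{[F_n:\QQ]}$, converging to zero provided $c\alpha_d$ beats $\log C(R,d)$.

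The main obstacle is precisely this last numerical comparison: the naive count of algebraic integers sits at the boundary of what Prasad's formula can absorb in low dimensions, so one must sharpen the trace count by exploiting that the eigenvalues of $\gamma$ lie on a single one-parameter subgroup of a maximal $F_n$-torus of $\G$ which is anisotropic at every infinite place other than $\sigma_0$ -- a far more rigid constraint than merely belonging to a quadratic extension of $\so_{F_n}$. The case of bounded degree and varying non-archimedean data will demand a parallel analysis in terms of the Bruhat--Tits buildings of $\G$ at the relevant finite places; this combination of archimedean and non-archimedean trace counting, together with a careful treatment of the $L$-function factors in Prasad's formula, is what I expect to be the technical heart of the conjecture, and the reason it remains open despite the availability of Theorem~\ref{BSconv_congruence} within a fixed commensurability class.
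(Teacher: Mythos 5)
The statement you are addressing is Conjecture~\ref{arit_conv}: the paper offers no proof of it, and explicitly remarks that in full generality it looks difficult to establish (\emph{si seulement elle est vraie!}). So there is no proof of record to compare your argument against, and your own text rightly stops short of claiming one. The genuine gap is the one you name yourself: after reducing, via Proposition~\ref{geod_BSconv}, to showing that the number of geodesics of length $\le R$ is $o(\vol(\Gamma_n\bs\HH^d))$, you bound the admissible traces by $C(R,d)^{[F_n:\QQ]}$ and the volume from below by $e^{c\alpha_d[F_n:\QQ]}$, and the argument only closes if $c\alpha_d>\log C(R,d)$ --- a numerical comparison that nothing in your sketch (or in the current literature) guarantees. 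A second point deserves flagging: as stated, the conjecture concerns maximal torsion-free lattices without the word \emph{arithm\'etique}, yet the paper's own discussion of the Gromov--Piatetski-Shapiro construction exhibits maximal torsion-free (non-arithmetic) lattices whose quotients have bounded injectivity radius and are therefore \emph{not} BS-convergent to $\HH^d$; your reading of an implicit arithmeticity hypothesis is the only one under which the statement can hold, but it means any purported proof must use arithmeticity in an essential way from the start.

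For what it is worth, your strategy is the one the paper carries out on the special cases it can actually handle. Proposition~\ref{geod_BSconv} is the reduction to counting short geodesics; Lemma~\ref{fini_corps} is exactly the archimedean rigidity statement you invoke for the eigenvalue $e^{\ell/2}$ of a short hyperbolic element (a unit generating a quadratic extension $E/F$ with $|u|_{E/F}=1$ and prescribed behaviour at the infinite places); and Theorem~\ref{kleinian_arit_conv} is the resulting partial confirmation for $d=3$ and invariant trace fields of fixed prime degree $p$, where the fixed-field case is handled instead through the Jacquet--Langlands correspondence and Theorem~\ref{BSconv_congruence}. The regimes you relegate to ``a parallel analysis'' --- bounded degree with growing discriminant, and varying data at finite places --- are precisely where the paper's restriction to prime degree (and to $d=3$) does real work, and where the conjecture remains open.
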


Une justification possible pour cette conjecture est donn\'ee par un r\'esultat de M. Ab\'ert et B. Vir\'ag (cf. \cite{Miklos_slides}), qui d\'eduit la convergence de l'existence d'un trou spectral optimal (i.e. de la conjecture de Ramanujan). En toute g\'en\'eralit\'e cette conjecture appara\^it comme difficile \`a d\'emontrer (si seulement elle est vraie!), mais pour les petites dimensions on peut l'attaquer directement. En effet, pour $d=2,3$ on a une description assez maniable des groupes $\G$ permettant la construction de r\'eseaux arithm\'etiques ; celle-ci provient en dimension 2 de l'isomorphisme exceptionnel $\SO(2,1)\cong\PSL_2(\RR)$ (resp. $\SO(3,1)\cong\PSL_2(\CC)$ en dimension 3). Un r\'eseau arithm\'etique $\Gamma$ de $\PSL_2(\CC)$ est d\'ecrit \`a commensurabilit\'e pr\`es par un corps de nombres $F$ ayant exactement une place complexe, et une alg\`ebre de quaternions $A$ sur $F$ ramifi\'ee \`a toutes les places r\'eelles de $F$. Le corps $F$ est alors appel\'e corps des traces invariants de $\Gamma$ (et peut effectivement \^etre obtenu \`a partir des traces des carr\'es des \'el\'ements de $\Gamma$). Les longueurs des g\'eod\'esiques ferm\'ees du quotient $\Gamma\bs\HH^3$ sont alors d\'ecrites par certaines extensions quadratiques de $F$. On ref\`ere \`a \cite{MR} et \cite{moi1} pour une description plus pr\'ecise de cette construction de r\'eseaux arithm\'etiques. 

Le r\'esultat suivant est en grande partie issu de \cite{moi1}. 

\begin{theo}
Soit $p$ un nombre premier ; si $\Gamma_n$ est une suite de r\'eseaux arithm\'etiques sans torsion maximaux de $\PSL_2(\CC)$ dont les corps des traces invariants sont tous de degr\'e $p$ alors la suite des vari\'et\'es $\Gamma_n\bs\HH^3$ est convergente au sens de Benjamini--Schramm vers $\HH^3$. 
\label{kleinian_arit_conv}
\end{theo}

Dans le cas o\`u le corps des traces invariants est fix\'e ce r\'esultat est une cons\'equence de la description des r\'eseaux maximaux et de la correspondance de Jacquet--Langlands et on n'a en fait pas besoin de l'hypoth\`ese sur le degr\'e du corps de traces (cf. la preuve de la proposition 6.7 dans \cite{moi1}). Dans le cas o\`u les corps de traces sont deux \`a deux distincts on utilise le lemme suivant. 

\begin{lem}
Soient $p$ un nombre premier et $u$ une unit\'e alg\'ebrique qui ne soit ni quadratique r\'eelle, ni une racine de l'unit\'e. Il n'existe qu'un nombre fini de corps de nombres $F$ de degr\'e $p$, ayant une seule place complexe et tels que $E=F(u)$ soit une extension quadratique de $F$ v\'erifiant $|u|_{E/F} = 1$. 
\label{fini_corps}
\end{lem}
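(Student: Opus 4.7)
L'id\'ee centrale est d'observer que $E = F(u)$ contient $\QQ(u)$, ce qui limite fortement le degr\'e $[\QQ(u):\QQ]$ et, par suite, celui de $F$.

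On poserait d'abord $K = \QQ(u)$ et $d = [K:\QQ]$. Puisque $u \in E$ et $[E:\QQ] = 2p$, on a $d \mid 2p$, soit $d \in \{1, 2, p, 2p\}$. Les hypoth\`eses sur $u$ \'eliminent les deux premi\`eres valeurs : $d = 1$ donnerait $u = \pm 1$, racine de l'unit\'e, et $d = 2$ donnerait soit une racine de l'unit\'e (cas quadratique imaginaire), soit une unit\'e quadratique r\'eelle---les deux sont exclus.

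Le c\oe ur de l'argument serait d'\'eliminer le cas $d = p$. Pour cela, on interpr\'eterait $|u|_{E/F} = 1$ comme l'\'egalit\'e $N_{E/F}(u) = 1$ : l'involution $\sigma \in \gal(E/F)$ v\'erifierait $\sigma(u) = u^{-1}$, et $t := u + u^{-1} = \tr_{E/F}(u)$ appartiendrait \`a $F$. Comme $t \in K$ \'egalement, on aurait $\QQ(t) \subseteq F \cap K$, d'o\`u $[\QQ(t):\QQ]$ diviserait $p$. Si $[\QQ(t):\QQ] = 1$, alors $u$ serait racine du polyn\^ome $X^2 - tX + 1 \in \QQ[X]$, d'o\`u $d \leq 2$---absurde. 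Si $[\QQ(t):\QQ] = p$, alors $K = \QQ(t) \subseteq F$ ; l'\'egalit\'e des degr\'es imposerait $F = K$, d'o\`u $u \in F$ et $E = F(u) = F$, contredisant $[E:F] = 2$.

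Il ne resterait que le cas $d = 2p$, c'est-\`a-dire $E = \QQ(u)$ enti\`erement d\'etermin\'e par $u$. Les corps $F$ recherch\'es seraient alors des sous-corps d'indice $2$ de ce corps fixe, en nombre fini (en fait, essentiellement un seul : le corps des points fixes de l'unique $\QQ$-automorphisme de $\QQ(u)$ envoyant $u$ sur $u^{-1}$, s'il existe). L'obstacle principal serait bien l'\'elimination du cas $d = p$, qui repose crucialement sur l'interpr\'etation arithm\'etique de la condition $|u|_{E/F} = 1$ ; les hypoth\`eses auxiliaires (place complexe unique de $F$) ne sont pas n\'ecessaires pour la finitude, mais pourraient restreindre davantage l'ensemble obtenu.
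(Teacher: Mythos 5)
Le pr\'esent article ne d\'emontre pas ce lemme (il est import\'e de \cite{moi1}), une comparaison directe avec la preuve originale n'est donc pas possible ; j'\'evalue votre proposition pour elle-m\^eme. Son squelette est correct et c'est la route naturelle : si un tel $F$ existe alors $\QQ(u)\subseteq E=F(u)$ et $[E:\QQ]=2p$, donc $d=[\QQ(u):\QQ]$ divise $2p$ ; les hypoth\`eses sur $u$ excluent bien $d=1$ et $d=2$ (le groupe des unit\'es d'un corps quadratique imaginaire est fini, donc form\'e de racines de l'unit\'e), et pour $d=2p$ le corps $E=\QQ(u)$ est fix\'e et n'a qu'un nombre fini de sous-corps. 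Votre remarque finale est \'egalement juste : l'hypoth\`ese sur l'unique place complexe de $F$ ne sert pas \`a la finitude mais \`a rendre le lemme applicable aux valeurs propres des \'el\'ements loxodromiques.

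Le point \`a consolider est le cas $d=p$, que vous \'eliminez en lisant $|u|_{E/F}=1$ comme $N_{E/F}(u)=1$. Cette lecture est vraisemblable (dans l'application, $u$ est valeur propre d'un \'el\'ement de norme r\'eduite $1$, de sorte que $u\,\sigma(u)=1$), mais la notation n'est pas d\'efinie dans ce texte et votre argument s'effondrerait si elle signifiait autre chose (par exemple $|u|_w=1$ aux places archim\'ediennes au-dessus des places r\'eelles de $F$). Heureusement, ce cas se traite sans aucune hypoth\`ese arithm\'etique. Posons $K=\QQ(u)$, de degr\'e $p$. Le degr\'e de $F\cap K$ divise $p$ ; s'il vaut $p$ alors $F=K$, donc $u\in F$ et $E=F$, ce qui contredit $[E:F]=2$. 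S'il vaut $1$, soit $G$ le groupe de Galois de la cl\^oture galoisienne de $E$ et $H$, $H'$ les sous-groupes correspondant \`a $F$ et $K$ : alors $\langle H,H'\rangle=G$, et $H\cap H'$, d'indice $2$ dans chacun des deux, est distingu\'e dans $G$ ; donc $E$ est galoisien sur $\QQ$, de groupe $G/(H\cap H')$ d'ordre $2p$ engendr\'e par deux involutions distinctes, c'est-\`a-dire di\'edral. Pour $p$ impair un tel groupe n'a pas de sous-groupe distingu\'e d'ordre $2$, donc $K$ n'est pas galoisien, et $E$, galoisien de degr\'e $2p$ contenant $K$, est n\'ecessairement la cl\^oture galoisienne de $\QQ(u)$ : il est d\'etermin\'e par $u$ et $F$ parcourt l'ensemble fini de ses sous-corps. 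Autrement dit, le cas $d=p$ n'est pas forc\'ement contradictoire, mais il ne fournit de toute fa\c con qu'un nombre fini de corps $F$, ce qui suffit. Avec ce compl\'ement, votre d\'emonstration est compl\`ete.
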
 

Via la correspondance entre g\'eod\'esiques et extensions quadratiques on en d\'eduit que si $p>2$ alors on obtient en fait par le lemme \ref{fini_corps} que $\inj(\Gamma_n\bs\HH^3)$ tend vers l'infini quand le discriminant de $F$ tend vers l'infini. Dans le cas restant o\`u $p=2$ on utilise le fait que par le th\'eor\`eme \ref{Borel_density} un groupe dans le support d'un IRS limite est soit trivial, soit Zariski dense ; comme dans le dernier cas il doit contenir un \'el\'ement dont les valeurs propres ne sont pas totalement r\'eelles, ce cas est impossible par le lemme \ref{fini_corps}. 

Une preuve diff\'erente, reposant sur une description plus pr\'ecise de la g\'eom\'etrie des groupes $\SL_2(\so_F)$ (utilisant en particulier un r\'esultat de Shin Ohno et Takao Watanabe \cite{Ohno_Watanabe})  et la correspondance de Jacquet--Langlands est aussi donn\'ee dans \cite{moi1}. Elle ne fonctionne cependant que pour les degr\'es 2 et 3. 

%%%%%%%%%%%%%%%%%%%%%%%%%%%%%%

\subsubsection{Topologie de la dimension trois}
\label{topo_dim3}

Rappelons les d\'efinitions de quelques invariants topologiques des vari\'et\'es de dimension trois :
\begin{itemize}
\item Si $H$ est un corps \`a anses de genre $g$ (i.e. un voisinage r\'egulier dans $\RR^3$ d'un bouquet de $g$ cercles plong\'e) et $\phi$ une classe d'hom\'eomorphisme de $\pl H$ alors $H\cup_\phi H$ (o\`u l'on identifie $x\in\pl H$ dans la premi\`ere copie de $H$ avec $\phi(x)$ dans la seconde) est une vari\'et\'e compacte sans bord de dimension 3 ; toute telle vari\'et\'e $M$ admet une telle d\'ecomposition (`scindement de Heegard') et on d\'efinit le genre de Heegard de $M$ comme le plus petit $g$ telle que $M$ ait un scindement en deux corps \`a anses de genre $g$. 
\item Si $S$ est une surface et $\phi$ une classe d'hom\'eomorphisme de $S$ alors la suspension $S\times [0,1]/\sim$ (o\`u l'on fait l'identification $(x,0)\sim(\phi(x),1)$) est une vari\'et\'e de dimension trois, que l'on appelle fibr\'ee de fibre $S$. Beaucoup de vari\'et\'es hyperboliques ne sont pas ainsi fibr\'ees, mais c'est un th\'eor\`eme r\'ecent de Ian Agol \cite{Agol_VH} que tout vari\'et\'e hyperbolique compacte a un rev\^etement fini qui fibre. Si $S$ est une surface immerg\'ee dans $M$ qui se rel\`eve dans un rev\^etement $M'$ en une fibre $S'$ d'une fibration de $M'$, on l'appelle fibre virtuelle dans $M$. 
\end{itemize}

Ces propri\'et\'es ont des lien bien connus avec la g\'eom\'etrie, en particulier si une vari\'et\'e hyperbolique $M$ admet un scindement de Heegard de genre $g$ alors son rayon, d'injectivit\'e global $\inj(M)$ est born\'e par une constante ne d\'ependant que de $g$ ; de m\^eme, si $M$ est fibr\'ee avec une fibre de genre $g$ on a une borne sur $\inj(M)$ que ne d\'epend que de $g$. On obtient ainsi le r\'esultat suivant. 

\begin{prop}
Si $M_n$ est une suite de vari\'et\'es hyperboliques compactes de dimension 3 qui soit convergente au sens de Benjamini--Schramm vers $\HH^3$ alors le genre de Heegard de $M_n$ et le genre minimal d'une fibre virtuelle dans $M_n$ tendent tous deux vers l'infini. 
\end{prop}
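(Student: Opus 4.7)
The plan is to argue by contradiction, using the geometric bounds on injectivity radius that the statement explicitly invokes just before its formulation. Suppose that along some subsequence (still denoted $M_n$) the Heegaard genus stays bounded by some fixed integer $g$; then by the cited fact---that a Heegaard splitting of genus $g$ forces a bound $\inj(M) \le C(g)$ on the \emph{global} (maximal) injectivity radius, with $C(g)$ depending only on $g$---we would have $\inj_x(M_n) \le C(g)$ for every $x\in M_n$ and every $n$.

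Recall from the definition of the thin part given in the introduction that a point $x$ satisfies $\inj_x(M)\le R/2$ exactly when $x$ lies in $M_{\le R}$. Setting $R_0=2C(g)$, the previous step gives
$$
(M_n)_{\le R_0} = M_n \quad \text{for every } n,
$$
and hence $\vol((M_n)_{\le R_0})/\vol(M_n) = 1$ for all $n$. This directly contradicts BS-convergence toward $\HH^3$, which (by the characterization \eqref{BS_trivial}) requires $\vol((M_n)_{\le R})/\vol(M_n)\to 0$ for every fixed $R>0$, in particular for $R=R_0$. Therefore the Heegaard genus of $M_n$ must tend to infinity.

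The argument for the minimal virtual fiber genus is formally identical: if along a subsequence there exists a virtual fiber of genus $\le g$ in $M_n$, one uses the analogous cited bound (a hyperbolic 3-manifold fibering over the circle with fiber of genus $g$ has $\inj(M)$ bounded by a constant $C'(g)$, which passes to the cover containing the virtual fiber and, because injectivity radius only decreases when taking quotients, down to $M_n$ itself) to again force $M_n = (M_n)_{\le 2C'(g)}$ and contradict BS-convergence.

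No real obstacle remains once the two geometric inputs are granted; the entire content of the proposition sits in the two bounds $\inj(M)\le C(g)$ and $\inj(M)\le C'(g)$, whose proofs involve sweep-outs by surfaces in the Heegaard case and the minimal surface / pleated surface realization of the fiber in the fibered case. The only delicate point to be explicit about is that passing from a virtual fiber of genus $g$ in $M_n$ to the injectivity radius bound on $M_n$ itself uses that $\inj$ is non-increasing under covering maps, so the bound $C'(g)$ obtained on the finite cover descends to $M_n$.
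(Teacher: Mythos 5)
Your argument is correct and is exactly the one the paper intends: the paper's entire proof consists of citing the two geometric facts (a genus-$g$ Heegaard splitting, resp.\ a fiber of genus $g$, bounds the maximal injectivity radius of $M$ by a constant $C(g)$) and observing that a bounded maximal injectivity radius forces $M_n=(M_n)_{\le 2C(g)}$, contradicting Benjamini--Schramm convergence to $\HH^3$. Your explicit remark that the bound obtained on the finite cover containing the virtual fiber descends to $M_n$ because the injectivity radius is non-increasing under covering projections is precisely the step the paper leaves implicit.
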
 

En particulier, avec la th\'eor\`eme \ref{kleinian_arit_conv} on obtient le r\'esultat suivant.

\begin{cor}
Si $g$ est fix\'e $p$ est un nombre premier alors il n'existe qu'un nombre fini de r\'eseaux arithm\'etiques maximaux (resp. de r\'eseaux de congruence, ou de classes de commensurabilit\'es de r\'eseaux arithm\'etiques) $\Gamma$ dont le corps de traces invariant est de degr\'e $p$ et tels que le genre de Heegard de $\Gamma\bs\HH^3$ (ou le genre minimal d'une fibre virtuelle dans $\Gamma\bs\HH^3$) soit \'egal \`a $g$.
\end{cor}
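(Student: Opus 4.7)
La stratégie est de raisonner par l'absurde en combinant les théorèmes de convergence de Benjamini--Schramm énoncés plus haut (\ref{kleinian_arit_conv} pour le cas des réseaux maximaux, \ref{BSconv_congruence} pour les sous-groupes de congruence) avec la proposition précédente, laquelle concentre le contenu géométrique : toute suite de 3-variétés hyperboliques convergeant au sens de Benjamini--Schramm vers $\HH^3$ voit son genre de Heegard et son genre minimal de fibre virtuelle tendre vers l'infini. Le corollaire devient ainsi une conséquence formelle : l'hypothèse produirait une telle suite sans que ses genres tendent vers l'infini.

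Dans le cas des réseaux arithmétiques maximaux, on suppose donnée une infinité de tels $\Gamma_n$, deux à deux non-conjugués. Le théorème \ref{kleinian_arit_conv} fournit directement la convergence de Benjamini--Schramm des $M_n = \Gamma_n\bs\HH^3$ vers $\HH^3$, et la proposition force alors les genres à tendre vers l'infini, ce qui contredit la borne $g$. Le cas des classes de commensurabilité se ramène à ce premier via le choix d'un représentant maximal dans chaque classe.

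Dans le cas des sous-groupes de congruence, chaque $\Gamma_n$ est contenu dans un réseau arithmétique maximal $\Gamma_n^{\max}$ (commensurable à $\Gamma_n$, donc ayant même corps des traces invariant), et $M_n$ revêt $M_n^{\max} = \Gamma_n^{\max}\bs\HH^3$. Tout lacet de longueur $\le R$ dans $M_n$ se projetant en un tel lacet sur $M_n^{\max}$, on a $(M_n)_{\le R}\subseteq \pi^{-1}((M_n^{\max})_{\le R})$ et donc
\begin{equation*}
\frac{\vol((M_n)_{\le R})}{\vol(M_n)} \le \frac{\vol((M_n^{\max})_{\le R})}{\vol(M_n^{\max})}.
\end{equation*}
Quitte à extraire, soit les $\Gamma_n^{\max}$ sont deux à deux non-conjugués, et \ref{kleinian_arit_conv} donne la convergence du membre de droite vers $0$ ; soit ils tombent dans un nombre fini de classes de commensurabilité, et \ref{BSconv_congruence} donne la même conclusion dans chaque classe. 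On conclut comme précédemment.

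L'obstacle principal, et essentiellement seul, est technique : le théorème \ref{kleinian_arit_conv} est énoncé pour des réseaux sans torsion alors que les réseaux maximaux peuvent comporter de la torsion. Je le contournerais en passant à un sous-groupe normal sans torsion d'indice uniformément borné (les ordres des éléments de torsion d'un réseau arithmétique étant contrôlés par les racines de l'unité contenues dans une extension de degré borné du corps des traces, dont le degré est ici fixé à $p$). La convergence de Benjamini--Schramm est préservée par l'inégalité ci-dessus appliquée au revêtement fini, et les genres de Heegard ou de fibre du sous-groupe restent bornés par une fonction de $g$ et de l'indice, donc par une constante.
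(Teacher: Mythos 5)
Votre d\'emonstration suit essentiellement la m\^eme voie que celle (laiss\'ee implicite) de l'article : combiner la convergence de Benjamini--Schramm fournie par les th\'eor\`emes \ref{kleinian_arit_conv} et \ref{BSconv_congruence} avec la proposition pr\'ec\'edente sur la divergence du genre de Heegard et du genre minimal d'une fibre virtuelle. Les compl\'ements que vous apportez (in\'egalit\'e sur les parties minces dans un rev\^etement fini, traitement de la torsion, dichotomie pour les sous-groupes de congruence selon que les r\'eseaux maximaux ambiants sont en nombre fini ou non) sont des pr\'ecisions correctes de l'argument que l'article laisse au lecteur.
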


%%%%%%%%%%%%%%%%%%%%%%%%%%%%%%

\subsubsection{Vari\'et\'es non-arithm\'etiques en dimensions sup\'erieures}

Soit $d\ge 4$ ; c'est une cons\'equence bien connue de la rigidit\'e locale des r\'eseaux de $\SO(d,1)$ (originellement observ\'ee par Hsien-Chung Wang) qu'\'etant donn\'e un $v>0$ il n'existe qu'un nombre fini de vari\'et\'es hyperboliques de dimension $n$ et de volume plus petit que $v$. On peut noter ${\mathcal M}_d(v)$ cet ensemble, on a alors 
$$
\log m(v) := \log|{\mathcal M}_d(v)| \asymp v\log(v). 
$$
Au vu des r\'esultats en dimension 2 et en rang sup\'erieur il est assez naturel de se poser, \`a la suite de Shmuel Weinberger (cf. \cite{Miklos_slides}), la question suivante. 

\begin{question}
Etant donn\'es $R,\eps>0$, soit $p(v)$ la proportion de $M\in {\mathcal M}_d(v)$ telles que $\vol (M_{\le R}) \le \eps\vol M$. Est-ce que $p(v)/m(v)$ tend vers z\'ero quand $v$ grandit?
\label{why}
\end{question}

La construction de vari\'et\'es non-arithm\'etiques par Gromov et Piatetski-Shapiro \cite{GPS} a \'et\'e reprise dans \cite{moi3} puis par Tsachik Gelander et Arie Levit dans \cite{Gelander_Levit}. On obtient ainsi des suites de groupes maximaux sans torsion telles que les vari\'et\'es associ\'ees aient un rayon maximal born\'e, et donc ne soient pas BS-convergentes vers le rev\^etement universel (leurs limites dans la topologie de Benjamini--Schramm sont les IRS construits en \ref{IRS_hybrides}). Le r\'esultat principal dans ce dernier article montre que pour $R$ assez grand on a $\log p(v) \asymp v\log(v)$, ce qui inciterait \`a penser que la r\'eponse \`a la question \ref{why} pourrait \^etre n\'egative. Cependant notre m\'econnaissance quasi-compl\`ete du paysage global de l'ensemble des vari\'et\'es hyperboliques en grande dimension ne saurait permettre la moindre certitude quand \`a ce sujet.

%%%%%%%%%%%%%%%%%%%%%%%%%%%%%%%%%%%%%%%%%%%%%%%%%%%%%%%%%%%%%%%%%%%%%%%%%%%%%%%%

\appendix

\section[Topologie des surfaces al\'eatoires unimodulaires]{Topologie des surfaces al\'eatoires unimodulaires \\ (par I. Biringer et J. Raimbault)}
\label{IRS_dim2}
On donne ici la d\'emonstration du th\'eor\`eme \ref{IRS_dim2_theo}. 

\subsection{Classification topologique des surfaces}

Commen\c cons par rappeler la classification topologique des surfaces de type infini. On rappelle que l'espace ${\mathcal E}(S)$ des bouts de $S$ est d\'efini comme la limite inductive des compl\'ementaires des sous-surfaces compactes de $S$\footnote{En termes plus intelligibles, \'etant donn\'ee une suite $K_n$ de sous-surfaces compactes de $S$ avec $K_n\subset K_{n+1}$ et $\bigcup_n K_n = S$, un bout de $S$ est d\'etermin\'e par une suite $E_n$, o\`u chaque $E_n$ est une composante connexe (forc\'ement non-born\'ee) du compl\'ementaire de $K_n$ avec $E_{n+1}\subset E_n$.}. C'est un espace compact totalement discontinu. Le {\it genre} (possiblement infini) d'une surface $S$ est le nombre maximum de courbes simples non s\'eparantes, deux \`a deux disjointes et non isotopes, sur $S$. Le genre d'un bout $E$ de $S$ est le minimum des genres des sous-surfaces $S'$ de $S$ contenant $E$; il ne peut \^etre \'egal qu'\`a z\'ero ou \`a l'infini. Le th\'eor\`eme suivant est d\^u \`a B\'ela Ker\'ekj\'art\'o, une d\'emonstration compl\`ete en est donn\'ee par Ian Richards dans \cite{Richards_surfaces}. 

\begin{theo}[Ker\'ekj\'art\'o, Richards]
Deux surfaces topologiques orientables $S$ et $S'$ sont hom\'eomorphes si et seulement si elles ont m\^eme genre, et s'il existe un hom\'eomorphisme ${\mathcal E}(S) \to {\mathcal E}(S')$ respectant les genres. 
\label{class}
\end{theo}

Un bout est dit cuspidal s'il est de genre 0 et isol\'e dans ${\mathcal E}(S)$ ; vu que dans une surface hyperbolique un tel bout est un cusp ou un entonnoir et que le second cas est impossible par le th\'eor\`eme \ref{limit_set} on a le lemme suivant, qui justifie cette appellation. 

\begin{lem}
Si $\nu$ est un IRS sans atome de $\PSL_2(\RR)$ alors tout bout isol\'e de genre 0 de $S$ est un cusp. 
\end{lem}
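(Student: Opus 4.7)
Le plan est d'utiliser directement le théorème \ref{limit_set} pour exclure la présence d'entonnoirs $\nu$-presque sûrement. Soit $\Lambda$ dans le support de $\nu$, posons $S = \Lambda \bs \HH^2$, et soit $E$ un bout isolé de $S$ de genre 0. Par hypothèse d'isolation et de genre 0, $E$ admet un voisinage $U$ homéomorphe à $S^1 \times [0, +\infty[$, bordé par une courbe simple fermée $\gamma$.

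La première étape est de rappeler la dichotomie classique: muni de la structure hyperbolique complète induite, le voisinage $U$ est nécessairement soit un cusp, soit un entonnoir. On le voit en regardant l'élément (plutôt, la classe de conjugaison) de $\Lambda$ représentant la classe d'homotopie libre de $\gamma$: celui-ci doit être parabolique (et $U$ est alors un cusp, quotient d'une horoboule par un groupe cyclique) ou hyperbolique (et $U$ est alors un entonnoir, quotient d'un demi-plan de $\HH^2$ bordé par l'axe de cet élément). L'éventualité d'un élément elliptique est exclue pour un voisinage de $E$ assez petit, puisqu'un tel élément produirait un point singulier plutôt qu'une fin.

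La seconde étape est de montrer que le cas d'un entonnoir oblige l'ensemble limite à être propre dans $\pl \HH^2$. Plus précisément, si $U$ est un entonnoir, une composante connexe $\wdt U$ de la préimage de $U$ dans $\HH^2$ est un demi-plan, bordé par une géodésique $\wdt\gamma$ dont les deux extrémités dans $\pl\HH^2$ délimitent un arc fermé $I$ égal à la trace de $\ovl{\wdt U}$ sur $\pl\HH^2$. L'intérieur de $I$ est contenu dans le domaine de discontinuité $\Omega(\Lambda)$: tout $\xi$ intérieur à $I$ est l'extrémité d'un rayon géodésique de $\wdt U$ qui descend, modulo $\Lambda$, vers l'infini de l'entonnoir, donc $\xi$ ne peut être limite d'orbites de $\Lambda$. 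On en déduit que $L(\Lambda) \subsetneq \pl\HH^2$.

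Il ne reste qu'à conclure par le théorème \ref{limit_set}: pour $\nu$-presque tout $\Lambda$ on a $L(\Lambda) = \pl \HH^2$, donc $\nu$-presque toute surface $S = \Lambda \bs \HH^2$ est dépourvue d'entonnoirs, et tout bout isolé de genre 0 est alors un cusp. L'unique point demandant un peu de soin est la dichotomie cusp/entonnoir de la première étape (en particulier l'exclusion d'éventuels éléments elliptiques dans $\Lambda$ pouvant compliquer la description géométrique d'un voisinage de $E$), mais cela relève de la théorie standard des surfaces hyperboliques et ne présente pas de difficulté réelle.
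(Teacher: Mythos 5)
Votre démonstration est correcte et suit essentiellement la même démarche que celle, très condensée, du texte : dichotomie cusp/entonnoir pour un bout isolé de genre 0 d'une surface hyperbolique, puis exclusion de l'entonnoir via le théorème \ref{limit_set}, un entonnoir forçant l'ensemble limite à être un sous-ensemble propre de $\pl\HH^2$. Vous ne faites qu'expliciter les détails que le papier laisse au lecteur.
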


On notera ${\mathcal E}_{\mathrm nc}(S)$ l'ensemble des bouts non-cuspidaux de $S$, qui est un sous-espace ferm\'e du compact ${\mathcal E}(S)$. Le r\'esultat que nous d\'emontrerons plus loin est le suivant. 

\begin{theo}
Soit $\nu$ un sous-groupe al\'eatoire invariant de $\PSL_2(\RR)$. Alors pour $\nu$-presque tout $\Gamma$, si la surface $S=\Gamma\bs\HH^2$ n'est pas de volume fini elle v\'erifie les propri\'et\'es suivantes :
\begin{itemize}
\item[(i)] Si $|{\mathcal E}_{\mathrm nc}(S)|>2$ alors ${\mathcal E}_{\mathrm nc}(S)$ est un ensemble de Cantor ;  
\item[(ii)] Si $S$ a un bout de genre 0 alors $S$ elle-m\^eme est de genre 0 ;  
\item[(iii)] Si $S$ a un bout cuspidal alors les bouts cuspidaux sont denses dans ${\mathcal E}(S)$. 
\end{itemize}
\label{theo_appendix}
\end{theo}

Montrons comment on en d\'eduit le th\'eor\`eme \ref{IRS_dim2_theo} : soit $\nu$ un IRS ergodique de $\PSL_2(\RR)$. Les conditions (i), (ii) et (iii) ci-dessus d\'eterminent d'apr\`es la classification (theor\`eme \ref{class}) exactement douze types topologiques ; en particulier (comme il n'y a qu'une quantit\'e d\'enombrable de types topologiques finis) le type topologique de $\Gamma\bs\HH^2$ appartient presque s\^urement \`a un ensemble d\'enombrable. Vu que l'ensemble des $\Gamma$ tels que $\Gamma\bs\HH^2$ ait un type topologique donn\'e est un bor\'elien invariant par $\PSL_2(\RR)$ il suit alors de l'ergodicit\'e de $\nu$ qu'une seule surface peut \^etre r\'ealis\'ee comme $\Gamma\bs\HH^2$ avec une probabilit\'e non nulle. Il reste \`a voir que les types infinis sont bien ceux indiqu\'es dans l'\'enonc\'e : deux des surfaces v\'erifiant (i), (ii) et (iii) c-dessus sont le plan et le cylindre, qui n'apparaissent pas dans le support d'IRS non-atomiques de $\PSL_2(\RR)$. On laisse au lecteur le soin de v\'erifier que les dix surfaces restantes sont bien celle apparaissant dans le th\'eor\`eme \ref{IRS_dim2_theo}. 

%%%%%%%%%%%%%%%%%%%%%%%%%%%%%%%%%%%%%%%%%%%%%%%%%%%%%%%%%%%%

\subsection{Transport de masse}

Le contenu de cette section vient de \cite{Biringer_Tamuz} (voir aussi \cite{Abert_Biringer}). Si $\nu$ est un IRS de $\PSL_2(\RR)$ sans atome, on note $\mu_\nu$ la mesure pouss\'ee en avant sur l'espace $\mathcal S$ des surfaces hyperboliques point\'ees. On note ${\mathcal S}_2$ l'espace des surfaces hyperboliques doublement point\'ees. Si $f$ est une fonction bor\'elienne sur ${\mathcal S}_2$ l'\'egalit\'e suivante est alors appel\'ee principe de transport de masse : 
\begin{equation}
\int_{\mathcal S} \int_S f(S,p,q) dq d\mu_\nu(S,p) =  \int_{\mathcal S} \int_S f(S,p,q) dp d\mu_\nu(S,q). 
\label{MTP}
\end{equation}

%%%%%%%%%%%%%%%%%%%%%%%%%%%%%%%%%%%%%%%%%%%%%%%%%%%%%%%%%%%%

\subsection{D\'emonstration du th\'eor\`eme \ref{theo_appendix}}

On va d\'emontrer successivement les points (i), (ii) et (iii) du th\'eor\`eme \ref{theo_appendix} : leurs preuves sont similaires mais ind\'ependantes. Pendant toute la d\'emonstration on notera $\mu$ une mesure unimodulaire sur $\mathcal S$, c'est \`a dire le pouss\'e en avant sur $\mathcal S$ d'un IRS via l'application naturelle $\sub_G\to\mathcal S$. 
 
\subsubsection{Bouts isol\'es}

\begin{lem}
Pour $\mu$-presque toute $(S,p)$, si $S$ a un bout non-cuspidal isol\'e des autres bouts non-cuspidaux alors elle a au plus deux bouts non-cuspidaux. 
\end{lem}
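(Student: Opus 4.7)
L'id\'ee est d'appliquer le principe de transport de masse \eqref{MTP} en transportant de la masse depuis la composante ``isol\'ee'' du compl\'ementaire d'une g\'eod\'esique simple ferm\'ee canonique vers son collier de Margulis, et d'exploiter que cette composante est de volume infini gr\^ace au th\'eor\`eme \ref{limit_set}.

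Raisonnons par l'absurde : supposons $\mu(A)>0$, o\`u $A\subset\mathcal S$ est l'\'ev\'enement, ne d\'ependant que de la classe d'isom\'etrie de $S$, que $S$ poss\`ede au moins trois bouts non-cuspidaux dont l'un est isol\'e des autres. Pour $(S,p)\in A$, l'isolement fournit une courbe simple ferm\'ee essentielle dont une composante du compl\'ementaire contient exactement un bout non-cuspidal (l'autre en contenant au moins deux) ; son repr\'esentant g\'eod\'esique est une g\'eod\'esique simple ferm\'ee v\'erifiant la m\^eme propri\'et\'e de s\'eparation. On en choisit canoniquement une, $\gamma=\gamma(S)$, parmi celles de longueur minimale (en moyennant si n\'ecessaire sur l'ensemble fini de telles g\'eod\'esiques de m\^eme longueur, ce qui ne change rien \`a l'argument qui suit). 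On pose alors $U=U(S)$, l'unique composante de $S\setminus\gamma$ contenant un seul bout non-cuspidal, et $C=C(S)$, le collier de Margulis standard autour de $\gamma$, de volume fini strictement positif.

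On applique \eqref{MTP} \`a la fonction bor\'elienne positive
$$
F(S,p,q) = \frac{1_A(S)}{\vol(C)}\cdot 1_{p\in U}\cdot 1_{q\in C}.
$$
Le membre de gauche vaut $\int_{\mathcal S}\int_S F\,dq\,d\mu(S,p) = \mu(\{(S,p)\in A : p\in U\})\le\mu(A)<+\infty$. Par le th\'eor\`eme \ref{limit_set}, $\mu$-presque toute surface $S$ a pour ensemble limite $\pl\HH^2$ tout entier, ne contient donc aucun entonnoir, et tout voisinage d'un bout non-cuspidal y accumule une infinit\'e de courbes simples ferm\'ees disjointes non isotopes, dont les colliers de Margulis (d'aires uniform\'ement minor\'ees et deux \`a deux disjoints) impliquent $\vol(U)=+\infty$. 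Le membre de droite vaut donc
$$
\int_{\mathcal S}\int_S F\,dp\,d\mu(S,q) = \int_{\mathcal S} \frac{1_A(S)\,\vol(U)}{\vol(C)} 1_{q\in C}\,d\mu(S,q) = +\infty,
$$
pourvu que $\mu(\{(S,q):A(S),\,q\in C(S)\})>0$, ce qui d\'ecoule de l'unimodularit\'e (en appliquant \eqref{MTP} \`a $G(S,p,q)=1_A(S)\,1_{q\in C(S)}$ et en utilisant que $\vol(C)>0$ sur $A$). Ceci contredit l'\'egalit\'e \eqref{MTP}, d'o\`u la conclusion.

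L'obstacle technique principal est de rendre rigoureuse la construction canonique de $(\gamma,U,C)$ en fonction de $S$, notamment le choix dans les cas d'\'egalit\'e de longueur parmi les g\'eod\'esiques s\'eparantes minimales ; la mesurabilit\'e bor\'elienne r\'esulte de la continuit\'e des longueurs des g\'eod\'esiques ferm\'ees pour la topologie de Chabauty et de la finitude de celles-ci pour une longueur born\'ee.
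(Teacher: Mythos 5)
Votre strat\'egie bute sur un obstacle qui n'est pas seulement technique mais r\'edhibitoire : un choix canonique (mesurable et invariant par $\isom(S)$, puisqu'il ne doit d\'ependre que de $S$ et non du point base) d'une g\'eod\'esique $\gamma(S)$ et d'un collier $C(S)$ de volume fini strictement positif est \emph{impossible} sur un ensemble de $\mu$-mesure positive de surfaces de volume infini --- et c'est pr\'ecis\'ement le principe de transport de masse que vous invoquez qui l'interdit. Votre application auxiliaire de \eqref{MTP} \`a $G(S,p,q)=1_A(S)1_{q\in C(S)}$ le montre d\'ej\`a : le membre de gauche vaut $\int_A \vol(C)\,d\mu$, quantit\'e finie (l'aire d'un collier de Margulis est major\'ee par une constante universelle) et strictement positive si $\mu(A)>0$, tandis que le membre de droite vaut $\int 1_A(S) 1_{q\in C(S)}\vol(S)\,d\mu(S,q)$, qui ne peut valoir que $0$ ou $+\infty$ puisque $\vol(S)=+\infty$ sur $A$. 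L'\'egalit\'e \eqref{MTP} appliqu\'ee \`a $G$ est donc d\'ej\`a contradictoire d\`es que $\mu(A)>0$ ; la seule issue est que l'assignation $S\mapsto(\gamma(S),C(S))$ n'existe pas. De fait, les difficult\'es que vous signalez en conclusion sont fatales : le spectre des longueurs d'une surface hyperbolique de type infini n'est pas discret en g\'en\'eral, donc l'infimum des longueurs des g\'eod\'esiques s\'eparantes consid\'er\'ees n'est pas n\'ecessairement atteint ; l'ensemble des minimiseurs n'a aucune raison d'\^etre fini (penser \`a un rev\^etement $\ZZ$-p\'eriodique, o\`u les translat\'es d'une g\'eod\'esique ont tous la m\^eme longueur) ; et s'il \'etait fini, la r\'eunion de leurs colliers serait justement un sous-ensemble canonique non vide de volume fini, exclu par l'argument ci-dessus.

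La preuve de l'article contourne enti\`erement ce probl\`eme en rempla\c cant l'objet canonique global par un objet d\'ependant du point base : la boule $B_S(p,r)$, automatiquement bien d\'efinie comme fonction de $(S,p)$. L'ensemble $E$ utilis\'e est form\'e des $(S,p,q)$ tels que $S-B_S(p,r)$ ait au moins trois composantes connexes de volume infini et que $q$ appartienne \`a une composante n'ayant qu'un seul bout non-cuspidal ; l'int\'egrale en $q$ est infinie car l'ensemble des $q$ admissibles contient une composante de volume infini, et l'int\'egrale en $p'$ est finie parce que tout $p'$ admissible est \`a distance au plus $2r$ de $p$, donc contenu dans une boule de volume born\'e. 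La finitude du c\^ot\'e fini ne provient donc pas d'un ensemble canonique de volume fini, mais d'une contrainte de proximit\'e au point base : c'est cette localisation qu'il faudrait adopter pour r\'eparer votre argument.
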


\begin{proof}
Supposons que $S$ ait un bout non-cuspidal isol\'e des autres avec une $\mu$-probabilit\'e $>0$ ; il existe alors un $r>0$ tel que, avec $\mu$-probabilit\'e $a>0$, $S-B_S(p,R)$ ait une composante connexe qui ait exactement un bout non-cuspidal : on fixe pour la suite un tel $r$. Soit $f$ la fonction caract\'eristique du sous-ensemble $E$ de ${\mathcal S}_2$ constitu\'e des $(S,p,q)$ telles que $S-B-S(p,r)$ ait au moins trois composantes connexes de volume infini, que $q$ soit contenu dans l'une d'elles, et que cette derni\`ere n'ait de plus qu'un seul bout non-cuspidal. On prouvera plus bas que cet ensemble est bor\'elien, ce qui permet alors d'obtenir une contradiction via le principe de transport de masse comme suit : si l'on fixe $(S,p,q)\in E$ alors l'ensemble des $q'\in S$ tels que $f(S,p,q') = 1$ est de volume infini (puisqu'il contient au moins une composante connexe de volume infini de $S-B_S(p,r)$, celle de $q$). On a donc
$$
\int_{\mathcal S} \int_S f(S,p,q) dq d\mu(S,p) \ge a\times\infty = \infty. 
$$
Mais si l'on consid\`eere l'ensemble des $p'\in S$ tels que $f(S,p',q) = 1$ est de volume inf\'erieur \`a celui d'une boule de rayon $2r$ dans $\HH^2$ : en effet, si $d(p,p') > 2r$ alors soit $B_S(p',r)$ est contenue dans la m\^eme composante connexe de $S-B_S(p,R)$ que $q$, auquel cas $S-B_S(p',r)$ a au plus deux composantes connexes de volume infini, soit il est contenus dans une autre, auquel cas $q$ est reli\'e dans $S-B_S(p',R)$ \`a l'une des composantes de volume infini de $S-B_S(p,r)$, et sa composante dans $S-B_S(p',r)$ a donc au moins deux bouts non-cuspidaux. On a donc 
$$
\int_{\mathcal S} \int_S f(S,p,q) dp d\mu(S,q) \le \vol B_{\HH^2}(2r) < \infty, 
$$
ce qui avec l'\'egalit\'e obtenue pr\'ec\'edemment nie \eqref{MTP}. 

Il reste \`a prouver que l'ensemble $E$ est bien un ensemble bor\'elien. On a $E=E_1\cap E_2$ o\`u $E_1$ est l'ensemble des $(S,p,q)$ telles que $S-B_S(p,r)$ ait au moins trois composantes connexes de volume infini, et $E_2$ l'ensemble des $(S,p,q)$ telles que $q$ soit dans une composante connexe de $S-B_S(p,r)$ ayant exactement un bout non-cuspidal. Montrons que $E_1$ est un bor\'elien : si on fixe un $V>0$, l'ensemble $U_R$ des $(S,p)$ telles que $S-B_S(p,r)$ ait au moins trois composantes connexes de volume $>V$ est ouvert dans le ferm\'e des $(S,p)$ telles que $S-B_S(p,r)$ ait au moins trois composantes connexes, et comme $E_1 = \bigcap_{V\in\NN} U_R$ ce dernier est bien bor\'elien. De la m\^eme mani\`ere, on a $E_2 = \bigcap_{R\in\NN} W_R$ o\`u $W_R$ est l'ensemble ouvert des $(S,p,q)$ tels que $q$ appartienne \`a une composante $C$ de $S-B_S(p,r)$ de volume infini telle que $C-B_S(p,R)$ ait au plus une composante de volume infini, et il suit que $E_2$ aussi est bor\'elien. 
\end{proof}

%%%%%%%%%%%%%%%%%%%%%%%%%%%%%%

\subsubsection{Genre}

\begin{lem}
Pour $\mu$-presque toute $(S,p)$, si $S$ n'est pas de genre nul alors elle est de genre infini : en fait tout bout non-cuspidal de $S$ est de genre infini. 
\end{lem}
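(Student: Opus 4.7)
Le plan est de raisonner par l'absurde en utilisant le principe de transport de masse \eqref{MTP}, dans un argument tout \`a fait analogue \`a celui de la preuve pr\'ec\'edente. On suppose donc qu'il existe $b > 0$ tel qu'avec $\mu$-probabilit\'e au moins $b$, la surface $S$ poss\`ede \`a la fois un genre positif et un bout non-cuspidal de genre 0, et on cherche \`a en d\'eriver une contradiction.

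Pour chaque $r > 0$ on consid\`ere l'ensemble bor\'elien $E_r \subset \mathcal{S}$ form\'e des $(S, p)$ telles que $B_S(p, r)$ soit de genre positif en tant que sous-surface de $S$ et que $S \setminus B_S(p, r)$ poss\`ede une composante connexe $C(p)$ planaire \`a un seul bout non-cuspidal. Pour toute surface $S$ v\'erifiant notre hypoth\`ese et tout point $p \in S$, il suffit de prendre $r$ assez grand pour que $B_S(p, r)$ contienne simultan\'ement un tore \`a un trou plong\'e dans $S$ (t\'emoignant du genre positif) et un s\'eparateur topologique du bout planaire, ce qui assure $(S, p) \in E_r$. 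Par $\sigma$-additivit\'e on fixe donc $r_0$ tel que $a := \mu(E_{r_0}) > 0$, et on pose $f(S, p, q) = \mathbf{1}[(S,p) \in E_{r_0},\ q \in C(p)]$. La composante $C(p)$ est de volume infini (puisqu'elle poss\`ede un bout non-cuspidal), donc le membre de gauche de \eqref{MTP} est minor\'e par $a \cdot \infty = \infty$.

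Pour borner le membre de droite, on montre que si $(S, p, q)$ et $(S, p', q)$ sont deux \'el\'ements du support de $f$ alors $d(p, p') \leq 2 r_0$. En effet, supposons $d(p, p') > 2 r_0$ : alors $B_S(p', r_0)$ est disjointe de $B_S(p, r_0)$ et enti\`erement contenue dans une composante de $S \setminus B_S(p, r_0)$. Si cette composante est $C(p)$, alors $B_S(p', r_0)$ est un sous-domaine d'une surface planaire, donc elle-m\^eme planaire, ce qui contredit directement la condition de genre positif de $B_S(p', r_0)$. Sinon, $B_S(p, r_0)$ et $C(p) \ni q$ appartiennent \`a la m\^eme composante de $S \setminus B_S(p', r_0)$ (reli\'ees par la partie de $\partial C(p) \subset \partial B_S(p, r_0)$ qui ne rencontre pas $B_S(p', r_0)$), qui h\'erite donc du genre positif de $B_S(p, r_0)$ et contredit la planarit\'e de $C(p')$. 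Par cons\'equent $\int_S f(S, p, q)\, dp \leq \vol B_{\HH^2}(2 r_0) < \infty$, d'o\`u un membre de droite fini et la contradiction cherch\'ee.

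La difficult\'e principale sera la v\'erification soigneuse du caract\`ere bor\'elien de $E_{r_0}$ et de la mesurabilit\'e du choix de $C(p)$ : comme dans la preuve pr\'ec\'edente, on exprime la condition \emph{$C(p)$ est planaire \`a un seul bout non-cuspidal} comme intersection d\'enombrable de conditions topologiques portant sur des parties compactes de taille croissante (contr\^olant respectivement le genre des sous-surfaces et la structure des composantes connexes de leurs compl\'ementaires), ce qui ne pr\'esente pas de nouvelle difficult\'e conceptuelle.
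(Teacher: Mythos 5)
Votre d\'emonstration suit la m\^eme strat\'egie que celle de l'article : m\^eme recours au principe de transport de masse \eqref{MTP} avec essentiellement la m\^eme fonction test, m\^eme minoration du membre de gauche, et m\^eme dichotomie (selon que $B_S(p',r_0)$ tombe dans la composante de $q$ ou dans une autre) pour majorer le membre de droite par $\vol B_{\HH^2}(2r_0)$ ; la v\'erification du caract\`ere bor\'elien est aussi trait\'ee de fa\c con identique.

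Il y a cependant une \'etape qui ne passe pas : vous exigez que la composante $C(p)$ soit planaire \emph{\`a un seul bout non-cuspidal}, et vous affirmez qu'il suffit de prendre $r$ assez grand pour qu'une telle composante existe. C'est faux en g\'en\'eral : une composante de $S\setminus B_S(p,r)$ n'a qu'un seul bout non-cuspidal que si ce bout est isol\'e dans ${\mathcal E}_{\mathrm nc}(S)$, ce que votre hypoth\`ese ne garantit pas. Par exemple, pour la surface obtenue en attachant une anse \`a l'arbre de Cantor (genre 1, tous les bouts de genre nul, non-cuspidaux et non isol\'es), toute composante de volume infini de $S\setminus B_S(p,r)$ poss\`ede un ensemble de Cantor de bouts non-cuspidaux, de sorte que votre ensemble $E_r$ est vide pour tout $r$ --- alors que c'est pr\'ecis\'ement ce type de surface que le lemme doit exclure. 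La correction est imm\'ediate, et c'est le choix fait dans l'article : on demande seulement que $q$ appartienne \`a une composante de volume infini et de genre nul de $S\setminus B_S(p,r)$, sans condition sur le nombre de bouts ni s\'election d'une composante privil\'egi\'ee $C(p)$ (ce qui r\`egle du m\^eme coup la question de la mesurabilit\'e de ce choix). Le reste de votre argument, qui n'utilise que la planarit\'e des composantes et le genre positif de la boule, s'applique alors tel quel.
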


\begin{proof}
La preuve est similaire \`a celle du lemme pr\'ec\'edent : supposons qu'avec $\mu$-probabilit\'e $>0$ la surface $S$ soit de genre non nul, et ait un bout non-cuspidal de genre nul alors il existe un $r>0$ tel qu'avec une $\mu$-probabilit\'e $>0$ la sous-surface $B_S(p,r)$ soit de genre non nul et l'une des composantes de volume infini de $S-B_S(p,r)$ soit de genre nul. On d\'efinit l'ensemble $E$ comme suit : on a $(S,p,q)\in E$ si et seulement si $B_S(p,r)$ ait genre $>0$ et $q$ appartienne \`a une composante de volume infini et de genre nul de $S-B_S(p,r)$. Si $f$ d\'esigne la fonction caract\'eristique de $E$, de la m\^eme mani\`ere que ci-dessus si l'on prouve que $f$ est bor\'elienne il suit que le c\^ot\'e gauche de \eqref{MTP} appliqu\'ee \`a $f$ est infini tandis que son c\^ot\'e droit est fini (en effet, si $f(S,p,q)=1$ et $d(p,p')>2r$ alors soit $B_S(p',r)$ est contenue dans la m\^eme composante de $S-B_S(p,r)$ que $q$ et est de genre nul, ou est contenue dans une autre composante de $S-B_S(p,r)$ et alors la composante de $q$ dans $S-B_S(p',r)$ contient $B_S(p,r)$ et est donc de genre $>0$). 

Il suffit donc de prouver que $E$ est un ensemble bor\'elien. La condition que $B_S(p,r)$ contienne un lacet non-s\'eparant d\'efinit un ensemble ouvert $U$, de m\^eme pour un $V>0$ celle que $q$ soit dans une composante de volume $>V$ de $S-B_S(p,r)$ d\'efinit un ouvert $W_V$. Enfin, pour $R>r$ la condition que $q$ appartienne \`a une composante de $B_S(p,R)-B_S(p,r)$ de genre nul d\'efinit elle aussi un ouvert $V_R$. On a 
$$
E = U\cap\bigcap_{V\in\NN} W_V \cap\bigcap_{R\in\NN} V_R
$$
et $E$ est donc bien un bor\'elien.  
\end{proof}

%%%%%%%%%%%%%%%%%%%%%%%%%%%%%%

\subsubsection{Bouts cuspidaux}

\begin{lem}
Pour $\mu$-presque toute $(S,p)$, si $S$ a un bout cuspidal alors les bouts cuspidaux de $S$ sont denses dans ${\mathcal E}(S)$. 
\end{lem}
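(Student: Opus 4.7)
Le plan est de mimer la strat\'egie de transport de masse des deux lemmes pr\'ec\'edents. Supposons par l'absurde qu'avec une $\mu$-probabilit\'e $>0$, $S$ ait un bout cuspidal sans que les bouts cuspidaux soient denses dans $\mathcal{E}(S)$. La non-densit\'e fournit un compact dont le compl\'ementaire poss\`ede une composante de volume infini sans bout cuspidal ; en l'agrandissant \`a une boule centr\'ee en $p$ et par union d\'enombrable sur les rayons, on fixe $r>0$ tel qu'avec $\mu$-probabilit\'e $>0$ on ait simultan\'ement (a) $B_S(p,r)$ contient un lacet simple ferm\'e librement homotope dans $S$ \`a un bout cuspidal (un \og lacet parabolique\fg), et (b) $S-B_S(p,r)$ admet une composante $C$ de volume infini sans bout cuspidal de $S$. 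La positivit\'e simultan\'ee de (a) et (b) utilise l'invariance de $\mu$ : si les cusps existent avec probabilit\'e positive, alors $p$ se trouve avec probabilit\'e positive pr\`es du bord d'une r\'egion horocyclique cuspidale standard d'aire $1$, ce qui assure (a).

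Posons $f(S,p,q)=1$ si (a) et (b) sont v\'erifi\'es et $q\in C$. Le c\^ot\'e gauche de l'\'egalit\'e de transport de masse \eqref{MTP} appliqu\'ee \`a $f$ est alors infini, puisque sur l'\'ev\'enement de mesure positive ci-dessus l'int\'egration de $q$ sur la composante $C$ de volume infini donne $+\infty$. Pour borner le c\^ot\'e droit on fixe $(S,q)$ admettant au moins un $p$ valide et on consid\`ere tout $p'$ avec $d(p,p')>2r$ et $f(S,p',q)=1$. Si $B_S(p',r)\subset C$, le lacet parabolique $\gamma'$ contenu dans $B_S(p',r)$ borde un cusp $c'$ de $S$ ; comme $c'$ n'est pas un bout de $C$, l'horoboule correspondante $H_{c'}$ est n\'ec\'essairement tranch\'ee par $B_S(p,r)$, ce qui force $p$ \`a \^etre \`a distance $\leq r$ de $H_{c'}$ et confine $p'$ dans la partie peu profonde de $H_{c'}$ du c\^ot\'e de $C$, r\'egion d'aire born\'ee. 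Sinon $B_S(p',r)$ est contenue dans une autre composante de $S-B_S(p,r)$ ; alors la composante de $q$ dans $S-B_S(p',r)$ contient $B_S(p,r)$ donc le lacet parabolique d'origine autour d'un cusp $c$, et l'absence de cusp dans cette composante force $B_S(p',r)$ \`a trancher $H_c$, pla\c cant $p'$ dans un $r$-voisinage de $H_c$, \`a nouveau d'aire born\'ee. Comme seules un nombre fini d'horoboules disjointes d'aire $\leq 1$ peuvent se trouver \`a distance $\leq r$ d'un point donn\'e, l'aire totale des $p'$ valides est born\'ee par une constante ne d\'ependant que de $r$, d'o\`u un c\^ot\'e droit fini et la contradiction voulue.

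La difficult\'e principale, par rapport aux deux lemmes pr\'ec\'edents, est qu'un lacet parabolique peut l\'egitimement vivre dans une sous-surface sans cusp---le lacet est du c\^ot\'e \og proche\fg\ de son horoboule, tandis que le cusp lui-m\^eme est de l'autre c\^ot\'e d'une tranche par $B_S(p,r)$---et on n'obtient donc aucune contradiction topologique imm\'ediate comme dans le cas du genre. On exploite \`a la place ce tranchage pour localiser \`a la fois $p$ et $p'$ pr\`es de la m\^eme horoboule, apr\`es quoi la finitude du nombre d'horoboules rencontrant une boule fixe prend le relais. La mesurabilit\'e de l'\'ev\'enement d\'efinissant $f$ s'obtient comme pr\'ec\'edemment en l'\'ecrivant comme combinaison d\'enombrable d'ensembles ouverts ou ferm\'es d\'etermin\'es par des informations \`a rayon fini.
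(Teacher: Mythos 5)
Votre strat\'egie g\'en\'erale est la m\^eme que celle de l'article : vous d\'efinissez un \'ev\'enement $E\subset{\mathcal S}_2$ combinant une condition \og $p$ d\'etecte un cusp\fg{} et la condition \og $q$ appartient \`a une composante de volume infini de $S-B_S(p,r)$ sans bout cuspidal\fg, puis vous appliquez le principe de transport de masse \eqref{MTP}, le c\^ot\'e gauche \'etant infini et le c\^ot\'e droit devant \^etre born\'e par une analyse des positions possibles de $p'$. La diff\'erence r\'eside dans la condition de d\'etection : l'article demande que la boule ferm\'ee $\ovl{B_S(p,r)}$ rencontre la partie $\eps$-mince d'un cusp (condition m\'etrique), tandis que vous demandez que $B_S(p,r)$ contienne un lacet simple librement homotope au bout cuspidal (condition homotopique).

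C'est l\`a que se trouve un vrai trou. Dans vos cas 2 et 3 vous affirmez que, le cusp $c'$ n'\'etant pas un bout de la composante $C$ contenant le lacet parabolique $\gamma'$, l'horoboule $H_{c'}$ est \og n\'ecessairement tranch\'ee par $B_S(p,r)$\fg. Cela ne d\'ecoule pas des hypoth\`eses : ce qui doit traverser $\ovl{B_S(p,r)}$ est le disque \'epoint\'e bord\'e par $\gamma'$ et contenant le bout $c'$, et non l'horoboule standard elle-m\^eme, laquelle peut \^etre enti\`erement contenue dans la composante de $S-B_S(p,r)$ dont $c'$ est un bout, \`a grande distance de $p$ ; un lacet simple homotope dans un cusp n'est nullement confin\'e au voisinage horocyclique standard de ce cusp (et rien ne place non plus $p'$ dans $H_{c'}$). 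Sans cette localisation, ni la position de $p'$ ni le nombre de cusps $c'$ \`a prendre en compte ne sont contr\^ol\'es, et la finitude du c\^ot\'e droit de \eqref{MTP} n'est pas \'etablie. La condition m\'etrique de l'article \'evite pr\'ecis\'ement cet \'ecueil : si $\ovl{B_S(p',r)}$ rencontre la partie mince d'un cusp et si $d(p,p')>2r$, alors soit $p'$ appartient \`a la partie mince (de volume universellement born\'e) de l'un des cusps --- en nombre born\'e --- touch\'es par $\ovl{B_S(p,r)}$, soit on contredit directement l'absence de bout cuspidal dans la composante de $q$. Pour sauver votre variante il faudrait soit revenir \`a la condition m\'etrique, soit d\'emontrer s\'epar\'ement que l'ensemble des $p'$ tels que $B_S(p',r)$ contienne un lacet simple homotope dans un cusp donn\'e est d'aire born\'ee (ce qui demande un argument, par exemple via la fonction de d\'eplacement du parabolique) et contr\^oler le nombre de cusps concern\'es.
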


\begin{proof}
La preuve suit encore le m\^eme principe que les deux pr\'ec\'edentes, cette fois en utilisant un ensemble $E$ d\'efini comme suit : $(S,p,q)\in E$ si et seulement si :
\begin{itemize}
\item[(i)] la boule ferm\'ee $\ovl{B_S(p,r)}$ intersecte la partie $\eps$-mince d'un cusp de $S$ (o\`u $\eps$ est plus petit que la constante de Margulis de $\HH^2$) ;
\item[(ii)] $S-B_S(p,r)$ a une composante connexe $C$ de volume infini n'ayant aucun bout cuspidal et $q\in C$ 
\end{itemize}
(ici $r$ est choisi de telle sorte que les deux premi\`eres conditions sur $(S,p)$ aient une $\mu$-probabilit\'e positive). 

On v\'erifiera plus bas que $E$ est un bor\'elien, et le c\^ot\'e gauche de \eqref{MTP} appliqu\'e \`a sa fonction caract\'eristique $f$ est clairement infini. Il reste \`a voir que le c\^ot\'e droit est fini : pour cela on fixe $(S,p,q)\in E$, et on montre que le volume des $p'$ avec $(S,p',q)\in E$ est fini, born\'e ind\'ependamment de $(S,p,q)$ : en effet, si $d(p,p')>2r$ on est dans l'un des trois cas suivants :  
\begin{itemize}
\item $p'$ est dans le cusp de $S-B_S(p,r)$ ;
\item $B_S(p',r)$ est contenue dans la m\^eme composante $C$ de $S-B_S(p,r)$ que $q$, et comme celle-ci n'a pas de bout cuspidal, $S-B_S(p',r)$ n'a pas de cusp parmi ses composantes connexes ; 
\item le cusp isol\'e par $B_S(p,r)$ est dans la m\^eme composante connexe de $S-B_S(p',r)$ que $q$. 
\end{itemize} 
Le seul cas o\`u l'on peut \'eventuellement avoir $(S,p',q)\in E$ est le premier, et il suit que 
$$
\vol(p':\: (S,p',q)\in E) \le \vol B_{\HH^2}(2r) + V
$$
o\`u $V$ est le volume de la partie $\eps$-mince d'un cusp. 

Montrons que $E$ est un bor\'elien : la condition (i) est ferm\'ee ; Pour un $R>r$ fix\'e la condition que $B_S(p,R)\cap C$ n'intersecte pas la partie $\eps$-mince d'un cusp de $S$ est ouverte (noter que la composante $C$ de $q$ dans $S-B_S(p,r)$ est localement bien d\'efinie), et la condition (ii) est l'intersection de ces conditions pour $R\in\NN,\, R>r$. 
\end{proof}

%%%%%%%%%%%%%%%%%%%%%%%%%%%%%%%%%%%%%%%%%%%%%%%%%%%%%%%%%%%%%%%%%%%%%%%%%%%%%%%%

\section{Convergence des surfaces de B\'elyi al\'eatoires}
\label{belyi}
D'apr\`es le lemme \ref{geod_BSconv} le th\'eor\`eme \ref{Belyi_conv} est une cons\'equence imm\'ediate du r\'esultat suivant, o\`u l'on notera :
$$
N_R(M) = |\{\text{g\'eod\'esiques de longueur } \le R \text{ sur } M\}|
$$
pour une vari\'et\'e riemannienne $M$. 

\begin{prop}
Soit $S_C$ une surface de B\'elyi al\'eatoire de complexit\'e $n$ et $R>0$. Quand $n\to+\infty$ elle satisfait avec une probabilit\'e tendant vers 1 la propri\'et\'e suivante : 
$$
\frac{N_R(S_C)} {\vol S_C} \xrightarrow[n\to+\infty]{} 0. 
$$
\label{belyi_geod}
\end{prop}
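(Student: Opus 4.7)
Le plan consiste à ramener le comptage $N_R(S_C)$ au comptage des cycles courts du graphe trivalent aléatoire $\mathcal G$ sous-jacent, puis à invoquer le résultat de Bollobás rappelé juste avant l'énoncé du théorème \ref{Belyi_conv}. Premièrement, j'utiliserais la comparaison de Brooks entre la surface arithmétique non-compacte $S$ et sa compactification conforme $S_C$ : celle-ci devrait fournir une constante $R' = R'(R)$ et un entier $C_1 \ge 1$ tels que les géodésiques fermées de longueur $\le R$ sur $S_C$ correspondent (avec une multiplicité uniformément bornée) à des géodésiques fermées de longueur $\le R'$ sur $S$, d'où l'inégalité $N_R(S_C) \le C_1 \cdot N_{R'}(S)$.

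Deuxièmement, j'analyserais les géodésiques fermées de $S$. La surface $S$ est obtenue en recollant $2n$ triangles idéaux sans cisaillement selon le schéma donné par $\mathcal G$, et toute géodésique fermée évite les parties cuspidales des triangles. Elle se projette donc naturellement sur un circuit fermé de $\mathcal G$ (qui est un rétracte par déformation de la partie épaisse de $S$) de longueur combinatoire au plus $L := \lceil C_2 R' \rceil$, pour une constante géométrique universelle $C_2$ reliée à la longueur minimale d'une traversée d'un triangle idéal dans sa partie épaisse. Un argument direct utilisant la rigidité du recollement sans cisaillement montre de plus que chaque circuit combinatoire de longueur $\le L$ ne peut être associé qu'à au plus $C_3 = C_3(R')$ géodésiques fermées de longueur $\le R'$, et donc
$$
N_{R'}(S) \le C_3 \cdot |\{\text{circuits de } \mathcal G \text{ de longueur } \le L\}|.
$$

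Troisièmement, le théorème de Bollobás cité dans le texte assure que pour tout $\delta > 0$ il existe $m = m(L, \delta)$ tel que, pour $n$ assez grand, avec probabilité $\ge 1-\delta$ le graphe $\mathcal G$ contienne au plus $m$ circuits de longueur $\le L$. En combinant les deux inégalités précédentes on obtient, avec probabilité tendant vers 1 quand $n\to+\infty$, que $N_R(S_C) \le C_1 C_3 m$ est borné indépendamment de $n$. Comme $\vol S_C \asymp n \to +\infty$ (l'aire hyperbolique de $S$ vaut $2\pi n$, et la compactification conforme ne modifie le volume que d'une quantité bornée par cusp, pour un nombre de cusps au plus $2n$), il en découle que $N_R(S_C)/\vol S_C \to 0$ en probabilité, ce qui est la conclusion cherchée. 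L'obstacle principal se situe dans la deuxième étape : l'application précise du théorème de comparaison de Brooks au comptage des géodésiques (plutôt qu'aux invariants spectraux comme $\lambda_1$ pour lesquels il est habituellement formulé) et le contrôle effectif du nombre $C_3(R')$ de géodésiques associées à chaque circuit combinatoire demandent un travail géométrique non-trivial, qui devrait pouvoir s'appuyer sur l'analyse fine de \cite{Brooks_Makover}.
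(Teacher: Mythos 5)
Votre strat\'egie est pour l'essentiel celle suivie dans l'annexe \ref{belyi} : comparaison \`a la Brooks entre $S_C$ et la surface \`a cusps $S$, r\'eduction du comptage des g\'eod\'esiques courtes de $S$ \`a celui des circuits courts du graphe $\mathcal G$, puis th\'eor\`eme de Bollob\'as. Le lemme de comparaison \cite[Lemma 3.1]{Brooks_platonic} est effectivement un \'enonc\'e sur les g\'eod\'esiques (il donne $N_R(S_C)\le N_{2R}(S)$, cf. le lemme \ref{comp_geod}), donc votre premi\`ere \'etape ne pose pas la difficult\'e que vous craignez ; en revanche il requiert que $S$ ait des cusps plong\'es de largeur $\ell$ pour un $\ell$ universel, propri\'et\'e qui n'est elle-m\^eme v\'erifi\'ee qu'avec probabilit\'e tendant vers 1 (th\'eor\`eme 2.1 de \cite{Brooks_Makover}) et qu'il faut donc incorporer explicitement \`a l'argument probabiliste. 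Notez aussi que la longueur combinatoire du circuit associ\'e \`a une g\'eod\'esique de longueur $\le R'$ n'est pas lin\'eaire en $R'$ : la distortion de l'application orbitale $\PSL_2(\ZZ)\to\HH^2$ \'etant logarithmique (une g\'eod\'esique qui s'enroule $m$ fois autour d'un cusp a une longueur $\asymp\log m$ alors que son image combinatoire est de longueur $\asymp m$), la borne correcte est exponentielle en $R'$ ; cela ne change rien \`a la conclusion puisqu'elle ne d\'epend que de $R'$.

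Le vrai trou est dans votre derni\`ere \'etape : la justification de $\vol S_C\asymp n$ ne tient pas. On a $\vol S=2\pi n$ et, par Gauss--Bonnet, $\vol S_C=\vol S-2\pi h$ o\`u $h$ est le nombre de cusps de $S$ ; la seule majoration $h\le 2n$ ne donne donc rien (elle autorise $\vol S_C=o(n)$, et rend m\^eme l'expression n\'egative). Il faut d\'emontrer $h=o(n)$ avec probabilit\'e tendant vers 1, ce qui est pr\'ecis\'ement le point (iii) du lemme \ref{surfaces_graphes} : dans le texte on le d\'eduit du point (ii) d\'ej\`a \'etabli (le nombre de g\'eod\'esiques courtes de $S$ est born\'e, donc $S$ converge au sens de Benjamini--Schramm vers $\HH^2$ par la proposition \ref{geod_BSconv}, donc le volume de la partie mince est $o(n)$, et chaque cusp y contribue une quantit\'e fixe strictement positive). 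Une fois ce point ajout\'e, votre d\'emonstration co\"incide avec celle du texte.
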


Il faut d'abord d\'emontrer quelques propri\'et\'es g\'eom\'etriques des surfaces al\'eatoires \`a cusps, que l'on d\'eduit des r\'esultats de Bollob\'as sure les graphes d\'ecrits dans \cite{Brooks_Makover}. On ignorera les probl\`emes li\'es \`a l'orientation cyclique autour des sommets (cf. \cite[Section 4]{Brooks_Makover}) qui ne jouent pas de r\^ole dans les r\'esultats que nous utilisons. Rappelons (cf. \cite[Section 3]{Brooks_Makover}) qu'on dit qu'une surface hyperbolique $S=\Gamma\bs\HH^2$ a des cusps plong\'es de largeur $\ell$ si on peut choisir un syst\`eme $\mathcal B$ d'horoboules disjointes autour des points fixes paraboliques de $\Gamma$ qui soit $\Gamma$-invariant et tel que tout parabolique de $\Gamma$ pr\'eservant une horoboule $B\in\mathcal B$ d\'eplace d'au moins $\ell$ sur son bord. 

\begin{lem}
Si $\mathcal G$ est un graphe al\'eatoire suivant la loi de Bollob\'as (d\'ecrite en \ref{belyi_maintext} et dans \cite[Section 5]{Brooks_Makover}) et $S$ la surface al\'eatoire model\'ee sur $\mathcal G$, alors avec probabilit\'e tendant vers 1 quand le nombre de sommets tend vers l'infini on a les propri\'et\'es suivantes :
\begin{itemize}
\item[(i)] Pour un $\ell>0$ fix\'e, $S$ a des cusps plong\'es de largeur $\ell$ ; 
 \item[(ii)] Pour un $R>0$ fix\'e, le nombre de g\'eod\'esiques ferm\'ees de longueur $\le R$ sur $S$ est born\'e.
\item[(iii)] Le nombre de cusps de $S$ est un $o(n)$. 
\end{itemize}
\label{surfaces_graphes}
\end{lem}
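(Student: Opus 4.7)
La stratégie consiste à traduire chacune des trois conditions (i), (ii), (iii) en une propriété combinatoire du graphe aléatoire $\mathcal G$ via le dictionnaire géométrico-combinatoire de Brooks--Makover, puis à conclure à l'aide du résultat de Bollobás rappelé plus haut sur le petit nombre de circuits courts, combiné à un résultat classique sur les faces des graphes de ruban 3-réguliers aléatoires pour (iii).

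Le dictionnaire est le suivant. Dans le recollement sans cisaillement de triangles idéaux prescrit par $\mathcal G$ (muni de ses ordres cycliques aux sommets), chaque cusp de $S$ correspond à un \emph{cycle de virage} du graphe de ruban, c'est-à-dire un cycle fermé dans lequel on suit toujours l'ordre cyclique à chaque sommet ; la longueur combinatoire de ce cycle est égale à la translation parabolique sur l'horocycle canonique associé au cusp, et le plongement ainsi que la disjonction de ces horocycles canoniques sont des conséquences directes de la construction. Par ailleurs, par le théorème de comparaison de Brooks (cf.\ \cite{Brooks_Makover}), toute géodésique fermée de longueur $\le R$ sur $S$ donne lieu à un circuit dans $\mathcal G$ dont la longueur est majorée par une constante $r_0(R)$ ne dépendant que de $R$. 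Enfin, le nombre total de cusps de $S$ est exactement le nombre de faces du graphe de ruban.

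L'application des résultats probabilistes est alors immédiate. Pour (i), le résultat de Bollobás implique que le nombre de cycles de virage de longueur strictement inférieure à $\ell$ reste borné en probabilité (uniformément en $n$ assez grand) ; en ne conservant dans le système $\mathcal B$ que les horocycles canoniques des cusps associés aux cycles de virage de longueur $\ge \ell$, on obtient un système $\Gamma$-invariant d'horoboules plongées disjointes satisfaisant la condition de déplacement $\ge \ell$. Pour (ii), $N_R(S)$ est majoré via le dictionnaire par le nombre de circuits de longueur $\le r_0(R)$ dans $\mathcal G$, qui est à son tour borné en probabilité par Bollobás, ce qui suffit. Pour (iii), un résultat classique (dû à Gamburd, cf.\ \cite[Section 5]{Brooks_Makover}) affirme que le nombre moyen de faces d'un graphe de ruban 3-régulier aléatoire sur $2n$ sommets est de l'ordre de $\log n$ ; par l'inégalité de Markov on en déduit que le nombre de cusps est $o(n)$ avec probabilité tendant vers 1.

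La principale difficulté est de rendre précis le théorème de comparaison de Brooks et, pour (ii), le passage à la compactification conforme $S_C$ intervenant dans la proposition \ref{belyi_geod} : il faut contrôler la longueur hyperbolique d'une géodésique fermée de $S_C$ en termes de celle de $S$ (la géométrie pouvant différer notablement au voisinage des points de compactification), et vérifier que cette comparaison est compatible avec la traduction combinatoire. Les autres points, en particulier (i) et (iii), ne reposent que sur la structure combinatoire du graphe aléatoire et sur les estimations classiques pour ses cycles et ses faces.
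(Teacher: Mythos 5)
Votre traitement du point (ii) suit essentiellement la m\^eme route que l'article : on fait correspondre aux g\'eod\'esiques ferm\'ees de longueur $\le R$ sur $S$ des circuits de longueur combinatoire contr\^ol\'ee dans $\mathcal G$ (via la distorsion logarithmique du plongement orbital de $\PSL_2(\ZZ)$ dans $\HH^2$), puis on invoque le th\'eor\`eme de Bollob\'as sur la loi de Poisson limite du nombre de circuits d'une longueur donn\'ee. Pour le point (iii) vous prenez un chemin diff\'erent mais l\'egitime : l'article d\'eduit (iii) du point (ii), du crit\`ere de convergence par comptage de g\'eod\'esiques (proposition \ref{geod_BSconv}) et du fait que chaque cusp contribue d'un volume fixe \`a la partie mince, tandis que vous utilisez directement l'estim\'ee $\mathbb{E}[\text{nombre de faces}] = O(\log n)$ et l'in\'egalit\'e de Markov. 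Votre argument est plus direct, mais il repose sur un r\'esultat ext\'erieur (Gamburd--Makover) plus fin que ce dont l'article a r\'eellement besoin.

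En revanche, votre preuve du point (i) comporte une lacune r\'eelle. D'une part, le r\'esultat de Bollob\'as ne donne que le fait que le nombre de cycles de virage courts reste born\'e en probabilit\'e, et non leur absence avec probabilit\'e tendant vers 1 ; pour les circuits ordinaires l'\'enonc\'e analogue est faux, puisque la loi limite est une Poisson de moyenne strictement positive et qu'il existe donc des circuits courts avec probabilit\'e limite non nulle. D'autre part, le proc\'ed\'e consistant \`a \'ecarter du syst\`eme $\mathcal B$ les horoboules des cusps \'etroits ne d\'emontre pas la propri\'et\'e voulue : la d\'efinition demande un syst\`eme d'horoboules autour de tous les points fixes paraboliques, et surtout l'application ult\'erieure du lemme de comparaison de Brooks (lemme \ref{comp_geod}) exige que chaque cusp soit de largeur $\ge\ell$ --- un seul cusp \'etroit suffit \`a invalider la majoration $N_R(S_C)\le N_{2R}(S)$, la compactification conforme pouvant cr\'eer des g\'eod\'esiques courtes au voisinage d'un tel cusp. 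Le fait que, avec probabilit\'e tendant vers 1, \emph{tous} les cusps soient larges est pr\'ecis\'ement le contenu non trivial du th\'eor\`eme 2.1 de Brooks--Makover, que l'article se contente de citer ; il vous faudrait soit le citer explicitement, soit reproduire leur argument de la section 6, qui ne se r\'eduit pas \`a un comptage de cycles courts.
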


\begin{proof}
Le point (i) est un des r\'esultats principaux de \cite{Brooks_Makover} (cf. leur th\'eor\`eme 2.1 et la section 6 de leur article). 

Le point (ii) suit imm\'ediatement des deux faits suivants : 
\begin{itemize}
\item[(a)] Pour un $R\in\NN$ donn\'e le nombre de chemins de longueur $R$ dans $\mathcal G$ est born\'e avec probabilit\'e tendant vers 1 quand $n\to+\infty$ ; 
\item[(b)] On a un isomorphisme $\phi : \pi_1({\mathcal G})\to \pi_1(S)\subset \PSL_2(\RR)$ tel que si $\phi(c)$ est hyperbolique alors $\ell(\phi(c))\ge k\log\ell(c)$ o\`u $k>0$ ne d\'epend pas de $S$ (tant que cette derni\`ere est un rev\^etement de la surface modulaire). 
\end{itemize}
On rappelle que $g\in\PSL_2(\RR)$ est dit hyperbolique si $\ell(g):=\inf_{x\in\HH^2}d(x,gx) >0$ ; sur le c\^ot\'e droit $\ell(c)$ d\'esigne le nombre minimal d'ar\^etes de ${\mathcal G}$ emprunt\'ees par un lacet repr\'esentant $c$. Le point (a) va suivre du fait que les variables al\'eatoires donnant le nombre de {\it circuits} d'une longueur donn\'ee dans $\mathcal G$ sont asymptotiquement des variables de Poisson ind\'ependantes \cite[2.4]{Bollobas_book} (voir aussi \cite[Th\'eor\`eme 5.3]{Brooks_Makover}). Pour en d\'eduire le r\'esultat sur les chemins il suffit d'observer que si $\mathcal G$ est un graphe trivalent quelconque, pour un $R>0$ donn\'e un circuit de longueur $\le R$ n'est contenu que dans un nombre fini de chemins de longueur $\le R$, et ce nombre ne d\'epend que de $R$. Le point (b) est une cons\'equence de ce que le plongement $\PSL_2(\ZZ)\to\HH^2$ donn\'e par une application orbitale $\gamma\mapsto\gamma x_0$ ait une distortion logarithmique (ceci suit facilement du fait que si l'on tronque les cusps on obtient une quasi-isom\'etrie de $\PSL_2(\ZZ)$ sur $\HH^2$ priv\'e d'une union d'horoboules disjointes, et que la distortion de l'inclusion de cette derni\`ere dans $\HH^2$ est logarithmique). 

Enfin, le point (iii) dans l'  \'enonc\'e est une cons\'equence du point (ii), du lemme \ref{Farber} et du fait que dans toute suite de surfaces qui soit BS-convergente vers $\HH^2$ le nombre de cusps est n\'egligeable par rapport au volume (vu que la contribution de la partie $\eps$-mince d'un cusp ($\eps = $ constante de Margulis) au volume de la partie mince est la m\^eme pour tous). Notons que la distribution plus pr\'ecise du nombre de cusps a fait l'objet de plusieurs travaux suivant \cite{Brooks_Makover} (citons par exemple \cite{Gamburd_Makover}). 
\end{proof}

Le r\'esultat suivant est d\^u \`a Brooks \cite[Lemma 3.1]{Brooks_platonic}. 

\begin{lem}[Brooks]
Il existe un $\ell$ tel que si $S$ est une surface hyperbolique ayant des cusps plong\'es de largeur $\ell$ alors pour tout $R>0$ on a 
$$
N_R(S_C) \le N_{2R}(S). 
$$
\label{comp_geod}
\end{lem}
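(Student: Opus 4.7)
Le plan est d'exploiter que, si $S$ a des cusps plong\'es de largeur $\ell$ assez grande, la m\'etrique hyperbolique $g_C$ sur la compactification conforme $S_C$ est proche, en dehors de petits disques $D_j$ autour des points rajout\'es $p_j \in S_C$, de la m\'etrique $g$ sur $S$. Plus pr\'ecis\'ement, je montrerais d'abord qu'il existe $\eps = \eps(\ell)$ tendant vers $0$ quand $\ell\to\infty$, ainsi que des disques g\'eod\'esiques $D_j \subset S_C$ centr\'es sur les $p_j$ dont le rayon (pour $g_C$) tend vers $0$, tels que l'inclusion naturelle $S_C \setminus \bigcup_j D_j \hookrightarrow S$ soit $(1+\eps)$-biLipschitzienne. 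C'est le c\oe ur analytique de la preuve, reposant essentiellement sur une estim\'ee d'uniformisation pour $S_C$ lorsque les cusps de $S$ sont larges.

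Ensuite, soit $\gamma$ une g\'eod\'esique ferm\'ee de $(S_C,g_C)$ de longueur $\le R$. Je la ``pousse'' hors des $D_j$ en rempla\c cant chaque arc de $\gamma$ contenu dans un $D_j$ par un arc le long de $\pl D_j$. Chaque tel remplacement ajoute \`a la longueur au plus le p\'erim\`etre de $D_j$ ; comme les $D_j$ sont \`a distance mutuelle minor\'ee et que $\gamma$ est de longueur $\le R$, le nombre total de passages est born\'e ind\'ependamment de $S$. Pour $\ell$ assez grand les $D_j$ sont suffisamment petits pour que la courbe ferm\'ee obtenue $\gamma' \subset S_C\setminus\bigcup_j D_j$ soit de longueur au plus $(1+\eps)R$ dans $(S_C,g_C)$, puis par le contr\^ole biLipschitz de longueur au plus $(1+\eps)^2 R \le 2R$ vue comme courbe dans $(S,g)$.

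Enfin, la classe d'homotopie libre de $\gamma'$ dans $S$ admet un unique repr\'esentant g\'eod\'esique, de longueur au plus celle de $\gamma'$, donc au plus $2R$. L'application ainsi d\'efinie de l'ensemble des g\'eod\'esiques ferm\'ees de $S_C$ de longueur $\le R$ vers celui des g\'eod\'esiques ferm\'ees de $S$ de longueur $\le 2R$ est injective : le morphisme naturel $\pi_1(S)\to\pi_1(S_C)$ est surjectif de noyau engendr\'e par les lacets paraboliques p\'eriph\'eriques, qui ne correspondent \`a aucune g\'eod\'esique ferm\'ee de $S$, donc deux classes distinctes sur $S_C$ se rel\`event en des classes d'\'el\'ements hyperboliques distinctes sur $S$. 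L'obstacle principal est sans conteste l'\'etape 1, qui n\'ecessite une comparaison d\'elicate des deux m\'etriques hyperboliques en fonction de la largeur des cusps ; une fois celle-ci \'etablie, le reste est essentiellement g\'eom\'etrique.
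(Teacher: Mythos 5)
Le texte ne d\'emontre pas ce lemme : il est cit\'e tel quel d'apr\`es Brooks \cite{Brooks_platonic} (Lemma 3.1), et votre proposition en est pour l'essentiel une reconstruction fid\`ele de la strat\'egie (comparaison des deux m\'etriques hyperboliques hors des cusps, contournement de petits disques autour des points ajout\'es, passage au repr\'esentant g\'eod\'esique dans $S$, injectivit\'e via la surjection $\pi_1(S)\to\pi_1(S_C)$). Il n'y a donc pas de preuve interne \`a l'article \`a laquelle comparer ; la seule question est de savoir si votre argument est complet. Il ne l'est pas, et le trou est exactement l\`a o\`u vous le signalez vous-m\^eme. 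Une moiti\'e de votre \'etape 1 est gratuite : l'inclusion $S\hookrightarrow S_C$ \'etant holomorphe, le lemme de Schwarz--Pick donne $g_C\le g$ sur $S$ sans aucune hypoth\`ese sur les cusps. Mais c'est la direction inutile pour l'in\'egalit\'e $N_R(S_C)\le N_{2R}(S)$ : partant d'une courbe courte pour $g_C$ il faut majorer sa longueur pour $g$, c'est-\`a-dire \'etablir l'in\'egalit\'e inverse $g\le(1+\eps)\,g_C$ hors de voisinages des $p_j$, et c'est pr\'ecis\'ement l\`a qu'intervient de mani\`ere essentielle l'hypoth\`ese de largeur $\ell$ des cusps. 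Cette estim\'ee, qui se d\'emontre par un argument \`a la Ahlfors--Schwarz en comparant \`a un cusp mod\`ele de largeur $\ell$, constitue tout le contenu du lemme de Brooks ; la laisser en bo\^ite noire revient \`a admettre le r\'esultat.

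Un second point, quantitatif, m\'erite d'\^etre signal\'e dans votre \'etape 2. La minoration de la distance mutuelle entre les $D_j$ contr\^ole le nombre de passages d'un disque \`a un autre, mais pas le nombre de retours d'une g\'eod\'esique ferm\'ee de $S_C$ dans un m\^eme disque $D_j$ : ce nombre se majore par $R$ divis\'e par le rayon d'injectivit\'e de $S_C$ en $p_j$, dont la minoration demande \`a nouveau un argument (reposant encore sur la largeur des cusps). De plus, m\^eme avec une borne du type $R/\rho$ excursions co\^utant chacune un p\'erim\`etre $O(\rho)$, le surco\^ut total est en $O(R)$ et non n\'egligeable devant $R$, ce qui menace le facteur $2$ ; il faut soit montrer qu'une g\'eod\'esique ferm\'ee de $g_C$-longueur $\le R$ ne peut pas approcher les $p_j$ de trop pr\`es lorsque $\ell$ est grand, soit mener le contournement plus soigneusement. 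L'\'etape 3 est en revanche correcte (et plus simple que votre formulation ne le sugg\`ere) : $\gamma'$ est librement homotope \`a $\gamma$ dans $S_C$, donc si deux g\'eod\'esiques de $S_C$ donnaient le m\^eme repr\'esentant g\'eod\'esique dans $S$, leurs classes libres co\"incideraient dans $S$, donc dans $S_C$, et elles seraient \'egales.
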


Il ne reste plus qu'\`a d\'eduire la proposition \ref{belyi_geod} : tout d'abord on note que par le point (iii) du lemme \ref{surfaces_graphes} on a $\vol(S_C)\sim\vol(S)$ avec une probabilit\'e tendant vers 1 : en effet, vu que $S$ et $S_C$ ont (par d\'efinition de la derni\`ere) le m\^eme genre, notant $h$ le nombre de cusps de $S$ on a $\vol(S) = \vol(S_C) + 2\pi h$ par le th\'eor\`eme de Gauss-Bonnet. Par le point (i) du m\^eme lemme on peut appliquer le lemme \ref{comp_geod} \`a $S$ avec probabilit\'e tendant vers 1. On a donc (pour $n$ assez grand) avec probabilit\'e tendant vers 1 la majoration
$$
\frac{N_R(S_C)} {\vol S_C} \le 2\frac{N_{2R}(S)} {\vol S} 
$$
pour une surface de B\'elyi de complexit\'e $n$. La proposition \ref{belyi_geod} suit alors du point (ii) du lemme \ref{surfaces_graphes}. 

%%%%%%%%%%%%%%%%%%%%%%%%%%%%%%%%%%%%%%%%%%%%%%%%%%%%%%%%%%%%%%%%%%%%%%%%%%%%%%%%

\bibliographystyle{plain}
\bibliography{bib}

\end{document}